\documentclass[11pt]{article} 

\usepackage{amssymb}
\usepackage{mathrsfs}

\usepackage{mathtools}
 
\usepackage{latexsym}\usepackage{amsmath}\usepackage{xspace}\usepackage{enumerate}\usepackage{amsfonts}\usepackage{amscd}\usepackage{syntonly}\usepackage{amssymb}
\usepackage[alwaysadjust]{paralist}

\usepackage{epsfig} 
  
\newtheorem{thm}{Theorem}[section]
\newtheorem{lem}[thm]{Lemma}
\newtheorem{prop}[thm]{Proposition}

\newtheorem{defn} [thm]{Definition}
\newtheorem{rem} [thm]{Remark}

\newenvironment{proof}[1][Proof]{\noindent \textbf{#1:} }{\hspace{\stretch{1}} $\Box$\\}
 
\newtheorem{step}{Step}

\hyphenation{Habilitations-schrift}

\usepackage{bbm}
\usepackage{t1enc}

\newcommand{\eps}{\varepsilon}
\newcommand{\ph}{\varphi}

\newcommand\crit{\operatorname{crit}}

\newcommand\supp{\operatorname{supp}}

\def\ad{\operatorname{ad}}
\def\Ad{\operatorname{Ad}}
\def\coker{\operatorname{coker}}
\def\d{\operatorname{d}}

\def\dist{\operatorname{dist}}
\def\dom{\operatorname{dom}}
\def\dvol{\operatorname{dvol}}

\def\im{\operatorname{im}}
\def\ind{\operatorname{ind}}
\def\Ind{\operatorname{Ind}}

\def\pr{\operatorname{pr}}
\def\ran{\operatorname{ran}}
\def\reg{\operatorname{reg}}
\def\Stab{\operatorname{Stab}}
\def\supp{\operatorname{supp}}
\def\univ{\operatorname{univ}}

\def\A{\mathcal A}
\def\AA{\mathbbm A}
\def\B{\mathcal B}
\def\E{\mathcal E}
\def\F{\mathcal F}
\def\G{\mathcal G}
\def\H{\mathcal H}
\def\L{\mathcal L}
\def\O{\mathcal O}

\def\S{\mathcal S}
\def\V{\mathcal V}
\def\Z{\mathcal Z}
\def\YM{\mathcal{YM}}
\def\YMV{\mathcal{YM}^{\V}}

\def\R{\mathbb{R}}

 \newcommand\codim{\operatorname{codim}}
 
 \newcommand\di{\operatorname{d}\!}

\newcommand\hybr{\operatorname{hybr}}

\newcommand\CR{\mathcal{CR}}

\newcommand\EV{\mathcal{E}^{\V}}

\newcommand\He{\mathcal H}

\begin{document}


\title{The Yang--Mills Gradient Flow and Loop Spaces of Compact Lie Groups}


\author{Jan Swoboda}

\maketitle

\begin{abstract}
We study the $L^2$ gradient flow of the Yang--Mills functional on the space of connection $1$-forms on a principal $G$-bundle over the sphere $S^2$ from the perspective of Morse theory. The resulting Morse homology is compared to the heat flow homology of the space $\Omega G$ of based loops in the compact Lie group $G$. An isomorphism between these two Morse homologies is obtained by coupling a perturbed version of the Yang--Mills gradient flow with the $L^2$ gradient flow of the classical action functional on loops. Our result gives a positive answer to a question due to Atiyah.
\end{abstract}


\tableofcontents

 

%
%

\section{Introduction}

Let $\Sigma:=S^2$ denote the unit sphere in the euclidian space $\mathbbm R^3$. Let $G$ be a compact Lie group, $\mathfrak g$ its Lie algebra (endowed with an $\Ad$-invariant inner product), and $P$ a principal $G$-bundle over $\Sigma$. In this paper we give an affirmative answer to a question raised by Atiyah relating Yang--Mills Morse homology of the space of gauge equivalence classes of $\mathfrak g$-valued connections on $P$ to heat flow homology of the group $\Omega G$ of based loops in $G$. The Morse complexes we shall be concerned with are the complex generated by the $L^2$ gradient flow of the Yang--Mills functional $\YM$  on the one hand, and the complex generated by the $L^2$ gradient flow of the classical action functional on $\Omega G$ on the other. Our goal is to establish a chain isomorphism between these two Morse complexes.\\
\noindent\\
Let us describe our setup. We denote by $\ad(P):=P\times_{\Ad}\mathfrak g$ the adjoint Lie algebra bundle over $\Sigma$, and by $\A(P)$ the space of $\mathfrak g$-valued $1$-forms on $P$. The latter is an affine space over $\Omega^1(\Sigma,\ad(P))$, the space of $\ad(P)$-valued $1$-forms on $\Sigma$. The curvature of a connection $A\in\A(P)$ is $F_A=dA+\frac{1}{2}[A\wedge A]\in\Omega^2(\Sigma,\ad(P))$. The space $\A(P)$ is acted on by the groups $\mathcal G(P)$ and $\G_0(P)$ of gauge, respectively based gauge transformations of $P$, cf.~Section \ref{sec:prelim} for precise definitions. On $\A(P)$ we consider the  $\mathcal G(P)$-invariant Yang--Mills functional 
\begin{eqnarray}\label{YMintroduction}
\mathcal{YM}\colon\A(P)\to\mathbbm R,\qquad\mathcal{YM}(A)=\frac{1}{2}\int_{\Sigma}\big\langle F_A\wedge\ast F_A\big\rangle.
\end{eqnarray}

The corresponding Euler-Lagrange equation is the second order partial differential equation $d_A^{\ast}F_A=0$, called \emph{Yang--Mills equation}. Critical points of $\YM$ are degenerate (due to the gauge invariance of the functional) but satisfy the so-called Morse--Bott condition, cf.~the discussion in Section \ref{sect:critmanifolds}. We shall be concerned with a perturbed version of the negative $L^2$ gradient flow equation associated with \eqref{YMintroduction}, which is the PDE
\begin{eqnarray}\label{introdYMgrad}
\partial_sA+d_A^{\ast}F_A-d_A\Psi+\nabla\V^-(A)=0.
\end{eqnarray}

Cf.~Section \ref{sect:perturbations} below for the precise form of the perturbation $\V^-\colon\A(P)\to\mathbbm R$. The term $d_A\Psi$ (where $\Psi\in\Omega^0(\Sigma,\ad(P))$) is introduced to make \eqref{introdYMgrad} invariant under time-dependent gauge transformations.\\
Let $S^1:=\mathbbm R/2\pi\mathbbm Z$. The {\emph{free loop group}} of $G$ is the space $\Lambda G:=C^{\infty}(S^1,G)$, endowed with the group multiplication defined by $(x_1x_2)(t):=x_1(t)x_2(t)$ for $x_1,x_2\in\Lambda G$. The {\emph{based loop group}} of $G$ is the subgroup 
\begin{eqnarray*}
\Omega G:=\left\{x\in\Lambda G\mid x(0)=\mathbbm 1\right\} 
\end{eqnarray*}
of $\Lambda G$. Throughout we will identify $\Omega G$ with the quotient of $\Lambda G$ modulo the free action of $G$ defined by $(h\cdot x)(t):=hx(t)$ for $h\in G$ and $x\in\Lambda G$. We endow $G$ with the biinvariant Riemannian metric induced by the $\Ad$-invariant inner product on $\mathfrak g$ and consider on $\Lambda G$ the classical action functional
\begin{eqnarray}\label{Eintroduction}
\mathcal E\colon\Lambda G\to\mathbbm R,\qquad\mathcal E(x)=\frac{1}{2}\int_0^{2\pi}\|\partial_tx(t)\|^2\,dt.
\end{eqnarray}

It descends to a functional on $\Omega G$ by biinvariance of the Riemannian metric on $G$. It is well-known that the critical points of $\mathcal E$ are precisely the closed geodesics in $G$. As a consequence of the invariance of the functional $\E$ under conjugation with elements $h\in G$, it follows that critical points of $\E$ are degenerate. However, also here it turns out that the Morse--Bott condition is satisfied. The (perturbed) negative $L^2$ gradient flow equation resulting from \eqref{Eintroduction} is the PDE
\begin{eqnarray}\label{introdloopgradient1}
\partial_sx-\nabla_t\partial_tx+\nabla\V^+(x)=0.
\end{eqnarray}

For the precise form of the perturbation $\V^+\colon\Omega G\to\mathbbm R$, we refer to Section \ref{sect:perturbations} below.\\
\noindent\\
Morse homology groups for loop spaces of compact Lie groups and homogeneous spaces have been computed in a classical paper by Bott \cite{Bott}, which constitutes an application of Morse theoretic ideas in the context of infinite dimensional Hilbert manifolds. For further applications to the theory of closed geodesics on general compact manifolds we refer to Klingenberg \cite{Klingenberg}. However, in both instances, Morse theory is based on a $W^{1,2}$ gradient flow, leading to an ODE in Hilbert space. In contrast, the $L^2$ gradient flow approach to Morse theory on loop spaces of compact Riemannian manifolds has only recently been investigated by Weber \cite{Web} and uses techniques from parabolic PDEs (cf.~also Salamon and Weber \cite{SalWeb} for an application to Floer homology of cotangent bundles). In the present work, we shall follow the latter approach and specialize some of the results in \cite{Web} to loop spaces of compact Lie groups. Complementary to heat flow homology, a Morse homology theory based on the $L^2$ gradient flow \eqref{introdYMgrad} on compact Riemann surfaces of arbitrary genus has been laid down by this author in \cite{Swoboda1}. These results are used throughout the present article.

\subsubsection*{Main results}
In their seminal paper \cite{AB}, Atiyah and Bott studied the Yang--Mills functional $\mathcal{YM}$ over a compact Riemann surface from a Morse--Bott theoretical point of view. This led them to the discovery of a close correspondence between the Morse theoretical picture of a stratification of the space $\A(P)/\mathcal G(P)$ into stable manifolds and certain moduli spaces of semi-stable holomorphic vector bundles, and initiated a lot of further research in algebraic geometry as documented e.g.~by the review article \cite{Kirwan} by Kirwan. In \cite{AB}, Atiyah and Bott pointed out that in the genus zero case the Yang--Mills critical points correspond via a so called holonomy map to closed geodesics in $G$. This observation was subsequently made more explicit through works by Gravesen \cite{Gravesen} and Friedrich and Habermann \cite{FriedrichHabermann}. In these articles, a holonomy map $\Phi\colon\A(P)\to\Omega G$ is constructed which assigns to a connection $A$ its holonomy along the greater arcs  in $\Sigma$ connecting the north and south pole, cf.~Appendix \ref{holmapappendix} for details. The map $\Phi$ is equivariant with respect to the actions of $\mathcal G(P)$ by gauge transformations and of $G$ by conjugation. It furthermore maps critical points of the Yang--Mills functional to closed geodesics in $G$ (of a certain homotopy type, determined by the bundle $P$), preserving the Morse indices. The natural question, raised by Atiyah, whether this apparent close relation between the aforementioned sets of generators of Morse complexes extends to the full Morse theory picture has not been resolved so far. However, a formal consideration invoking an adiabatic limit of a certain deformation of the standard Riemannian metric on $\Sigma=S^2$ indicates a positive answer to his question. Namely it is suggested that for a family of Riemannian metrics on $\Sigma$ which approximates a $\delta$-impulse on the equator, connecting trajectories of the Yang--Mills gradient flow could be constructed in a bijective way from from those of the heat flow \eqref{introdloopgradient1}. Such an approach, although successfully been followed in many related contexts (cf.~e.g.~\cite{DoSal,JaNo}), did not provide an answer to that question. However, Davies has obtained some interesting preliminary results in his unpublished PhD thesis \cite{Davies} supervised by Salamon.\\
The aim of the present paper is to settle Atiyah's question following a completely different approach. The guiding idea in our proof is to combine the $L^2$ gradient flows \eqref{introdYMgrad} and \eqref{introdloopgradient1} by studying a so-called {\emph{hybrid moduli space problem}}. For a given pair $\hat C^{\pm}$ of critical manifolds of the functionals $\YM$, respectively $\E$, we shall consider the space of configurations
\begin{multline*}
\lefteqn{\hat{\mathcal M}(\hat{\mathcal C}^-,\hat{\mathcal C}^+):=}\\
\big\{(A,\Psi,x)\in C^{\infty}(\mathbbm R^-,\A(P)\times\Omega^0(\Sigma,\ad(P)))\times\,C^{\infty}(\mathbbm R^+,\Lambda G)\;\big|\\
(A,\Psi)\;\textrm{satisfies}\;\eqref{introdYMgrad},\quad x\;\textrm{satisfies}\;\eqref{introdloopgradient1},\quad x(0)=h\Phi(A(0))\;\textrm{for some}\;h\in G,\\
\lim_{s\to-\infty}(A(s),\Psi(s))=\big(A^-,0)\in\hat{\mathcal C}^-\times\Omega^0(\Sigma,\ad(P)),\lim_{s\to+\infty}x(s)=x^+\in\hat{\mathcal C}^+\big\}.
\end{multline*}

Hence $\hat{\mathcal M}(\hat{\mathcal C}^-,\hat{\mathcal C}^+)$ is the moduli space of tuples $(A,\Psi,x)$ such that $(A,\Psi)$ solves the perturbed Yang--Mills gradient flow equation \eqref{introdYMgrad} on the negative time interval $(-\infty,0]$, while $x$ is a solution of the perturbed loop group gradient flow equation \eqref{introdloopgradient1} on the positive time interval $[0,\infty)$. Both solutions are coupled under the holonomy map $\Phi\colon\A(P)\to\Omega G$ as introduced above. The moduli space $\mathcal M(\mathcal C^-,\mathcal C^+)$ then to be studied is the quotient of $\hat{\mathcal M}(\hat{\mathcal C}^-,\hat{\mathcal C}^+)$ modulo the actions by gauge transformations and left translations $x\mapsto hx$ (for $h\in G$).\\
As pointed out before, the sets of critical points of both functionals $\YM$ and $\E$ are degenerate in a Morse--Bott sense. This fact requires us to use a certain variant of Morse theory, called \emph{Morse theory with cascades}, as introduced by Frauenfelder in \cite{Frauenfelder1} and described in \cite[Section 8.1]{Swoboda1}. Throughout we shall work on fixed sublevel sets $\{A\in\A(P)\mid\YM(A)\leq a\}$ and $\{x\in\Lambda G/G\mid\E(x)\leq b\}$ (where usually $b={4a}/\pi$). As an additional datum, we fix a Morse function $h$ on the union of critical manifolds of $\YM$ below the level $a$ (respectively of $\E$ below the level $b$), the discrete set of critical points of which are the generators of two Morse complexes 
\begin{eqnarray}\label{introdcomplexes}
CM_{\ast}^a\big(\A(P)/\mathcal G_0(P),\V^-,h\big)\qquad\textrm{and}\qquad CM_{\ast}^b\big(\Lambda G/G,\V^+,h\big).
\end{eqnarray}

Here and throughout this article we adopt the convention that  $\Omega G=\Lambda G/G$ shall denote the connected component of the based loop group which contains the image of $\A(P)$ under the map $\Phi$, cf.~Appendix \ref{holmapappendix}. It is determined by the equivalence class of the principal $G$-bundle $P$. Our goal is to set up a chain homomorphism $\Theta$ between the complexes in \eqref{introdcomplexes}. It is defined for a pair of generators of equal Morse index by a count of elements in a certain \emph{moduli space with cascades}. These are constructed from a suitable concatenation of elements in $\mathcal M(\mathcal C^-,\mathcal C^+)$ and negative gradient flow lines of the function $h$, cf.~ Definition \ref{hybridflowline}. The key observation, allowing us to show invertibility of the homomorphism $\Theta$, is the property of the holonomy map $\Phi\colon\A(P)\to\Omega G$ to decrease energy. Namely, for any connection $A\in\A(P)$ there holds the inequality
\begin{eqnarray}\label{introenergy}
\mathcal{YM}^{\V^-}(A)\geq\frac{\pi}{4}\mathcal E^{\V^+}(\Phi(A)),
\end{eqnarray} 
(cf.~Lemma \ref{decomplemma}), with equality if $A$ is a Yang--Mills connection. Inequality \eqref{introenergy} is not a new result and can be found in Gravesen \cite{Gravesen}. In our context it leads directly to the proof of invertibility of $\Theta$ and thus implies the desired isomorphism in Morse homology.

\begin{thm}[Main result]\label{thm:mainresult} 
Let $G$ be a compact Lie group, and $P$ a principal $G$-bundle $P$ over $\Sigma$. Let $a\geq0$ be a regular value of $\YM$ and set $b:=4a/\pi$. Then, for a generic $a$-admissible perturbation $\V=(\V^-,\V^+)\in Y_a$ (cf.~Definition \ref{def:regperturbation} below) the chain homomorphism
\begin{eqnarray*}
\Theta_{\ast}\colon CM_{\ast}^a\big(\A(P)/\mathcal G_0(P),\V^-,h\big)\to CM_{\ast}^b\big(\Lambda G/G,\V^+,h\big)
\end{eqnarray*}
induces an isomorphism
\begin{eqnarray*}
\fbox{$\displaystyle [\Theta_{\ast}]\colon HM_{\ast}^a\big(\A(P)/\mathcal G_0(P),\V^-,h\big)\to HM_{\ast}^b\big(\Lambda G/G,\V^+,h\big)$}
\end{eqnarray*}
of Morse homology groups.
\end{thm}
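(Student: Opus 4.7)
The strategy is to construct $\Theta_\ast$ as a chain-level count of hybrid cascades, verify that it is a chain map, and then use the energy inequality \eqref{introenergy} to recognise it as an isomorphism on the associated graded of a natural action filtration.

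The first step is to set up the analytic foundations for the hybrid moduli spaces $\mathcal M(\mathcal C^-,\mathcal C^+)$. Linearising the coupled system along a triple $(A,\Psi,x)$ yields a Fredholm operator on suitable weighted Sobolev spaces: on $(-\infty,0]$ one has the linearisation of the perturbed Yang--Mills gradient flow studied in \cite{Swoboda1}, on $[0,\infty)$ the parabolic Jacobi operator along the loop flow from \cite{Web}, and the two half-line problems are glued by the linearisation of the coupling condition $x(0)=h\,\Phi(A(0))$. Standard Kuranishi reduction at the Morse--Bott critical manifolds, combined with the choice of an admissible abstract perturbation $\V\in Y_a$, makes the universal moduli space transverse; a Sard--Smale argument then yields, for a generic $\V$, a smooth manifold $\mathcal M(\mathcal C^-,\mathcal C^+)$ of the expected dimension. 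Compactness modulo breaking into Yang--Mills pieces at $s\to-\infty$, loop pieces at $s\to+\infty$, and sliding of the coupling time follows from the $L^\infty$ and energy estimates plus the gauge fixing/removal of singularities techniques of \cite{Swoboda1,Web}. This is the technically heaviest step and I expect it to be the main obstacle; the delicate point is controlling simultaneous breaking at both infinities together with the nonlinear coupling through $\Phi$ at $s=0$.

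With the moduli spaces in hand, $\Theta_\ast$ is defined as in \cite[Section~8.1]{Swoboda1} following Frauenfelder's formalism \cite{Frauenfelder1}: for generators $c^-,c^+$ of equal Morse index one counts (mod $2$ or with coherent orientations) the finite set of cascade configurations consisting of an element of $\mathcal M(\mathcal C^-,\mathcal C^+)$ concatenated with downward $h$-gradient fragments on the intermediate critical manifolds, constrained so that the total formal dimension vanishes. The usual codimension-one analysis of the one-parameter families of such cascades identifies the two ends of each compactified one-dimensional moduli space with the two compositions $\partial^+\circ\Theta_\ast$ and $\Theta_\ast\circ\partial^-$, which establishes that $\Theta_\ast$ is a chain map.

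For invertibility, I would filter both complexes by the values of $\YM^{\V^-}$ and $\E^{\V^+}$ at the generators. Tracing the action along a cascade contributing to $\Theta_\ast$, both the Yang--Mills flow on $(-\infty,0]$ and the loop flow on $[0,\infty)$ decrease the respective actions, and the coupling at $s=0$ combined with \eqref{introenergy} gives
\[
\YM^{\V^-}(c^-)\;\geq\;\YM^{\V^-}(A(0))\;\geq\;\tfrac{\pi}{4}\E^{\V^+}(x(0))\;\geq\;\tfrac{\pi}{4}\E^{\V^+}(c^+),
\]
so that $\Theta_\ast$ is filtration-preserving once the target filtration is rescaled by $4/\pi$. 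Equality throughout forces, by Lemma~\ref{decomplemma}, that $A(0)$ be a Yang--Mills connection and that both flow pieces are stationary; thus the only cascades surviving to the associated graded are pairs of stationary critical points $A\in\mathcal C^-$ and $x=h\Phi(A)\in\mathcal C^+$ joined by an $h$-gradient cascade on each critical manifold. Hence on each graded piece $\Theta_\ast$ reduces to the Morse chain map induced by the index-preserving diffeomorphism $\Phi\colon\mathcal C^-\to\mathcal C^+$ of \cite{Gravesen,FriedrichHabermann}, which is an isomorphism on the Morse homology of the critical manifolds. A standard comparison of the induced spectral sequences, or equivalently an induction on the (discrete) set of critical values below level $a$, upgrades this to the desired isomorphism of the full complexes.
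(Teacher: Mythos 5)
Your overall strategy coincides with the paper's: the chain map is defined by counting hybrid cascades, and invertibility is deduced from the energy inequality \eqref{introenergy} together with the fact that equality forces stationarity, so that on generators ordered by action (refined by $h$ within each critical manifold) the matrix of $\Theta_k$ is upper triangular with $1$'s on the diagonal. Your filtration/associated-graded formulation is just an abstract repackaging of that triangularity argument, so the core of the proof matches.

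One step of your plan would fail as literally stated. You propose to achieve transversality of the hybrid moduli spaces by a Sard--Smale argument over the space of $a$-admissible perturbations. But admissible perturbations are by definition supported away from the neighborhoods $U_{\hat{\mathcal C}}$ of the critical manifolds, so they cannot perturb the \emph{stationary} hybrid trajectories $(A,0,\Phi(A))$ with $A\in\hat{\mathcal C}^-$ and $\mathcal C^+=\Phi(\mathcal C^-)$ --- and these are exactly the configurations responsible for the diagonal entries $\Theta_{ii}^k=1$, so their regularity cannot be waived. The paper handles this by a separate \emph{automatic} transversality argument in the stationary case (Proposition \ref{prop:trivialkernel} and Lemma \ref{lem:transversstat}): convexity of $s\mapsto\|\zeta(s)\|_{L^2}^2$ along the linearized flows, combined with the infinitesimal energy comparison of Proposition \ref{prop:infindecompequation}, shows $\ker\mathcal D_u^{\delta}=0$, which together with the index formula gives surjectivity of $d\F(u)$ without any perturbation. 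Your plan needs this (or an equivalent) argument inserted; generic perturbations only cover the non-stationary case, where the transversality results of \cite{Swoboda1} and \cite{Web} apply.
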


Let us point out here that the method of defining chain maps between Morse homology complexes by utilizing hybrid moduli spaces is a fairly recent one. It has successfully been employed by Abbondandolo and Schwarz \cite{AbSchwarz} in proving that Floer homology $HF_{\ast}(T^{\ast}M)$ of cotangent bundles $T^{\ast}M$ is isomorphic to singular homology of the free loop space $\Lambda M$ ($M$ a compact manifold). In their situation, a crucial role is played by an inequality similar to \eqref{introenergy}, relating the symplectic action to the classical action via Legendre duality. 

\subsubsection*{Further directions}
\paragraph{$G$-equivariant Morse homology} The group $G$ acts in a natural way on the quotient spaces $\G(P)/G_0(P)$ and $\Lambda G/G$. In the first case,  it is given by $g\cdot[A]=[g^{\ast}A]$ for $g\in G\cong\G(P)/\G_0(P)$. In the second case the group $G$ acts by conjugation $g[x]=[g^{-1}xg]$. In his thesis \cite{Swoboda} the author has worked out a $G$-equivariant version of Theorem \ref{thm:mainresult}. This is mainly a technical extension and requires to replace the spaces $\A(P)/\G_0(P)$ and $\Lambda G/G$ by $(\A(P)\times E_nG)/\G(P)$, respectively by $(\Lambda G\times E_nG)/(G\times G)$, for a suitable finite-dimensional approximation $E_nG$ of the classifying space $EG$.

\paragraph{Higher genus surfaces} Morse homology $HM_{\ast}^a\big(\A(P)/\mathcal G_0(P),\V^-,h\big)$ as considered in this article for $\Sigma=S^2$ has more generally been defined in \cite{Swoboda1} for closed Riemann surfaces of arbitrary genus. It is known from \cite{Davies,Gravesen} that Yang--Mills connections on principal $G$-bundles $P$ over such surfaces correspond bijectively to certain geodesic polygons in the Lie group $G$. Moreover, an estimate similar to \eqref{decompequation} relating the energy functionals $\YM$ and $\E$ continuous to hold true in this more general situation. Hence one should be able to prove a version of Theorem \ref{thm:mainresult} for higher genus surfaces, but this is open at present.

\subsubsection*{Acknowledgements}
This work is based on the author's PhD thesis \cite{Swoboda}. He would like to express his gratitude to his advisor D.~A.~Salamon for his support while working on this project. He would like to thank A.~Oancea for bringing to his attention the work \cite{AbSchwarz}. A discussion with M.~Atiyah concerning some of the background and history of the question treated in this article is greatfully acknowledged. Many thanks also to  W.~Ballmann, R.~Janner, M.~Schwarz, M.~Struwe, and J.~Weber for fruitful discussions.

\section{Critical manifolds, Yang--Mills gradient flow lines, and Morse complexes}

\subsection{Preliminaries}\label{sec:prelim}
Let $\Sigma:=S^2$ be the unit sphere in $\mathbbm R^3$, endowed with the standard round metric. Let $G$ be a compact Lie group with Lie algebra $\mathfrak g$. On $\mathfrak g$ we fix an $\Ad$-invariant inner product $\langle\,\cdot\,,\,\cdot\,\rangle$, which exists by compactness of $G$. Let $P$ be a principal $G$-bundle over $\Sigma$. A gauge transformation is a section of the bundle $\Ad(P)\coloneqq P\times_GG$ associated to $P$ via the action of $G$ on itself by conjugation $(g,h)\mapsto g^{-1}hg$. Let $\ad(P)$ denote the Lie algebra bundle associated to $P$ via the adjoint action
\begin{eqnarray*}
(g,\xi)\mapsto\left.\frac{d}{dt}\right|_{t=0}g^{-1}\exp(t\xi)g\qquad(\textrm{for}\,g\in G,\,\xi\in\mathfrak g)
\end{eqnarray*}
of $G$ on $\mathfrak g$. We denote the space of smooth $\ad(P)$-valued differential $k$-forms by $\Omega^k(\Sigma,\ad(P))$, and by $\A(P)$ the space of smooth connections on $P$. The latter is an affine space over $\Omega^1(\Sigma,\ad(P))$. The group $\G(P)$ acts on $\A(P)$ by gauge transformations. We call a connection $A\in\A(P)$ \emph{irreducible} if the stabilizer subgroup $\Stab A\subseteq\G(P)$ is trivial. Otherwise it is called \emph{reducible}. It is easy to show that $\Stab A$ is a compact Lie group, isomorphic to a subgroup of $G$. Let $z\in\Sigma$ be arbitrary but fixed. We let $\G_0(P)\subseteq\G(P)$ denote the group of \emph{based gauge transformation}, i.e.~those gauge transformations which leave the fibre $P_z\subseteq P$ above $z$ pointwise fixed. It is a well-known fact that $\G_0(P)$ acts freely on $\A(P)$.\\
\noindent\\
On $\A(P)$ we define a gauge-invariant $L^2$ inner product by $\langle\alpha,\beta\rangle=\int_{\Sigma}\langle\alpha\wedge\ast\beta\rangle$ for $\alpha,\beta\in\Omega^1(\Sigma,\ad(P))$. The curvature of the connection $A$ is the $\ad(P)$-valued $2$-form $F_A=dA+\frac{1}{2}[A\wedge A]$. It satisfies the Bianchi identity $d_AF_A=0$. The covariant exterior differential induced by $A\in\A(P)$ is the operator
\begin{eqnarray*}
d_A\colon\Omega^k(\Sigma,\ad(P))\to\Omega^{k+1}(\Sigma,\ad(P)),\quad\alpha\mapsto d\alpha+[A\wedge\alpha].
\end{eqnarray*}
The formal adjoint $d_A^{\ast}$ of it is given by $d_A=-\ast d_A\ast$. The covariant Hodge Laplacian on forms is the operator $\Delta_A\coloneqq d_A^{\ast}d_A+d_Ad_A^{\ast}$. The \emph{perturbed Yang--Mills functional} $\YMV$ has been introduced in \eqref{YMintroduction}. If $\V=0$, we write $\YM$ and call this the \emph{unperturbed Yang--Mills functional}. The $L^2$ gradient of $\YMV$ at $A\in\A(P)$ is $\nabla\YMV(A)=d_A^{\ast}F_A+\nabla\V(A)\in\Omega^1(\Sigma,\ad(P))$. Its Hessian is the second order differential operator
\begin{eqnarray*} 
H_A\YMV=d_A^{\ast}d_A+\ast[\ast F_A\wedge\,\cdot\,]+H_A\V\colon\Omega^1(\Sigma,\ad(P))\to\Omega^1(\Sigma,\ad(P)).
\end{eqnarray*}
We also make use of the notation $H_A\coloneqq d_A^{\ast}d_A+\ast[\ast F_A\wedge\,\cdot\,]$.\\
\noindent\\
Throughout we will use Sobolev spaces of sections of vector bundles and Banach manifolds modeled on such Sobolev spaces, like e.g.~various groups of gauge transformations. A detailed account of this subject is given in the book \cite[Appendix B]{Wehrheim}. We therefore keep the discussion of these matters short. Let $1\leq p\leq\infty$ and $k\geq0$ an integer. We fix a smooth reference connection $A\in\A(P)$. It determines a covariant derivative $\nabla_A$ on $\Omega^{\ast}(\Sigma,\ad(P))$. We employ the notation $W^{k,p}(\Sigma,\ad(P))$ and $W^{k,p}(\Sigma,T^{\ast}\Sigma\otimes\ad(P))$ for the Sobolev spaces of $\ad(P)$-valued $0$- and $1$-forms whose weak derivatives (with respect to $\nabla_A$) up to order $k$ are in $L^p$. These spaces are independent of the choice of $A$. However, for $k\geq1$, the corresponding norms depend on this choice.  The standard Sobolev embedding and Rellich--Kondrachov compactness theorems apply to these spaces. The affine $(k,p)$-Sobolev space of connections on $P$ is defined as
\begin{eqnarray*}
\A^{k,p}(P)\coloneqq A+W^{k,p}(\Sigma,T^{\ast}\Sigma\otimes\ad(P)).
\end{eqnarray*}
This definition is again independent of the choice of smooth reference connection $A$. To define Sobolev spaces of gauge transformations we need to assume $kp>\dim\Sigma=2$. Then let $\G^{k,p}(P)$ denote the set of equivariant maps $P\to G$ which are of the form $g=g_0\exp(\ph)$, where $g_0$ is a smooth such map and $\ph\in W^{k,p}(\Sigma,\ad(P))$. (Here we view $\ph$ as an equivariant map $P\to\mathfrak g$). The space $\G^{k,p}(P)$ is a Banach manifold modeled on $W^{k,p}(\Sigma,\ad(P))$. As a well-known fact we remark that $\G^{k,p}(P)$ is a group with smooth group multiplication and inversion. It acts smoothly on $\A^{k-1,p}(P)$ by gauge transformations. Let $I$ be a finite or infinite interval. We often make use of the parabolic Sobolev space
\begin{eqnarray}\label{eq:defparabolicSobolev}
W^{1,2;p}(I\times\Sigma,\ad(P))\coloneqq L^p(I,W^{2,p}(\Sigma,\ad(P)))\cap W^{1,p}(I,L^p(\Sigma,\ad(P)))
\end{eqnarray} 
of $\ad(P)$-valued $0$-forms admitting one time and two space derivatives in $L^p$ (and similarly for $\ad(P)$-valued $1$-forms). The parabolic Sobolev space $\A^{1,2;p}(P)$ of connections is defined analogously, with $W^{2,p}(\Sigma,\ad(P))$ and $L^p(\Sigma,\ad(P))$ in \eqref{eq:defparabolicSobolev} replaced by $\A^{2,p}(P)$, respectively $\A^{0,p}(P)$. Note that when there is no danger of confusion, we for ease of notation write $L^p(\Sigma)$ instead of $L^p(\Sigma,\ad(P))$ or $L^p(\Sigma,T^{\ast}\Sigma\otimes\ad(P))$ (and similarly for the other Sobolev spaces). Further notation frequently used is $\dot A\coloneqq\partial_sA\coloneqq\frac{d A}{ds}$, etc.~for derivatives with respect to the time parameter $s$.

\subsection{Banach spaces of abstract perturbations}\label{sect:perturbations}

\subsubsection*{Perturbations of the Yang--Mills funcional}\label{Bspaceperturbations}

Our construction of a Banach space of perturbations is based on the following $L^2$ local slice theorem due to Mrowka and Wehrheim \cite{MrowkaWehrheim}. We fix $p>2$ and let
\begin{eqnarray*}
\S_{A_0}(\varepsilon)\coloneqq\big\{A=A_0+\alpha\in\A^{0,p}(P)\,\big|\,d_{A_0}^{\ast}\alpha=0,\|\alpha\|_{L^2(\Sigma)}<\varepsilon\big\}
\end{eqnarray*}
denote the set of $L^p$-connections in the local slice of radius $\varepsilon$ with respect to the reference connection $A_0\in\A^{0,p}(P)$.

\begin{thm}[$L^2$ local slice theorem]\label{locslicethm} 
Let $p>2$. For every $A_0\in\A^{0,p}(P)$ there are constants $\varepsilon,\delta>0$ such that the map
\begin{eqnarray*}
\mathfrak m\colon\big(\S_{A_0}(\varepsilon)\times\G^{1,p}(P)\big)/\Stab{A_0}\to\A^{0,p}(P),\qquad[(A_0+\alpha,g)]\mapsto(g^{-1})^{\ast}(A_0+\alpha)
\end{eqnarray*}
is a diffeomorphism onto its image, which contains an $L^2$ ball,
\begin{eqnarray*}
B_{\delta}(A_0)\coloneqq\big\{A\in\A^{0,p}(P)\,\big|\,\|A-A_0\|_{L^2(\Sigma)}<\delta\big\}\subseteq\im\mathfrak m.
\end{eqnarray*}
\end{thm}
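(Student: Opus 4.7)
My plan is to establish this in three steps: (i) verify that $\mathfrak m$ descends to a well-defined smooth map on the quotient, (ii) apply the Banach-space inverse function theorem in the $L^p$-topology to conclude that $\mathfrak m$ is a local diffeomorphism and, together with a standard uniqueness argument, a diffeomorphism onto an $L^p$-open image, and (iii) upgrade this to show that the image contains an $L^2$-ball. The nontrivial point is (iii); the first two steps are essentially standard Coulomb-gauge machinery.

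\textbf{Steps (i)--(ii).} The compact group $\Stab A_0\subseteq\G^{1,p}(P)$ acts on $\S_{A_0}(\eps)\times\G^{1,p}(P)$ by $h\cdot(A_0+\alpha,g)=(A_0+\Ad(h^{-1})\alpha,\,hg)$; because $h$ fixes $A_0$ and the slice condition is $\Stab A_0$-covariant, $\mathfrak m$ is invariant. Since $p>2$ and $\dim\Sigma=2$, the Sobolev space $W^{1,p}(\Sigma,\ad(P))$ is a Banach algebra embedded in $C^0$, so the gauge action, and hence $\mathfrak m$, is smooth. Linearizing at $[(A_0,\mathbbm 1)]$ produces
\[
d\mathfrak m_{(A_0,\mathbbm 1)}(\alpha,\xi) = \alpha - d_{A_0}\xi,
\]
and Fredholmness of $\Delta_{A_0}$ gives the $L^p$ Hodge decomposition
\[
L^p(\Sigma, T^{\ast}\Sigma\otimes\ad(P)) = \ker d_{A_0}^{\ast}\,\oplus\, d_{A_0}\bigl(W^{1,p}(\Sigma,\ad(P))\bigr).
\]
Since the kernel $\ker d_{A_0}$ on $0$-forms is exactly the Lie algebra of $\Stab A_0$ (which has been quotiented out), this identifies $d\mathfrak m_{(A_0,\mathbbm 1)}$ with a Banach space isomorphism. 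The inverse function theorem then yields a local diffeomorphism onto an $L^p$-open neighborhood of $A_0$, and uniqueness of the Coulomb gauge condition for small $\alpha$ gives global injectivity on the quotient.

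\textbf{Step (iii) and the main obstacle.} Given $A\in\A^{0,p}(P)$ with $\|A-A_0\|_{L^2}<\delta$, one must find $\xi\in W^{1,p}(\Sigma,\ad(P))$ such that $g=\exp\xi\in\G^{1,p}(P)$ satisfies $d_{A_0}^{\ast}((g^{-1})^{\ast}A - A_0)=0$, with $\alpha=(g^{-1})^{\ast}A - A_0$ of $L^2$-norm less than $\eps$. Expanding the gauge action yields the semilinear elliptic equation
\[
\Delta_{A_0}\xi = d_{A_0}^{\ast}(A-A_0) + \mathcal N(\xi, A-A_0),
\]
where $\mathcal N$ vanishes to first order in $\xi$. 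The crux is that $A-A_0$ is controlled only in $L^2$, while solvability for $\xi\in W^{1,p}$ seemingly requires $L^p$-smallness. My strategy is to solve first in the Hilbert space $W^{1,2}$, using that $\Delta_{A_0}^{-1}\colon W^{-1,2}\to W^{1,2}$ is bounded (modulo the finite-dimensional kernel) and that $\|d_{A_0}^{\ast}(A-A_0)\|_{W^{-1,2}}\lesssim\|A-A_0\|_{L^2}<\delta$. The two-dimensional Sobolev embedding $W^{1,2}\hookrightarrow L^q$ for every $q<\infty$ lets me bound the cross term $\xi\cdot(A-A_0)$ in $L^r$ for $r<2$ arbitrarily close to $2$; combined with $L^2$-smallness of $A-A_0$, this closes a contraction in a small $W^{1,2}$-ball and produces $\xi\in W^{1,2}$ with $\|\xi\|_{W^{1,2}}\lesssim\delta$. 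Elliptic bootstrapping, applied to the same equation with $\xi\in L^{\infty}\cap W^{1,2}$ and inhomogeneity in $L^p$, then upgrades $\xi$ to $W^{1,p}$, and $\|\alpha\|_{L^2}$ can be made smaller than $\eps$ by shrinking $\delta$. The heart of the argument is this $L^2$-to-$L^p$ interpolation: a direct IFT yields only an $L^p$-open neighborhood, and the strengthening to an $L^2$-ball relies crucially on the critical Sobolev scaling that is specific to two dimensions.
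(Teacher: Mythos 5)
There is a genuine gap at the heart of your Step (iii), which is precisely the step that distinguishes this theorem from the standard ($L^p$) local slice theorem. Your contraction argument in $W^{1,2}$ does not close. Writing $a\coloneqq A-A_0$, the nonlinearity contains the term $d_{A_0}^{\ast}[\xi\wedge a]$ with $a$ controlled only in $L^2$ and $\xi\in W^{1,2}$. In dimension two $W^{1,2}$ does \emph{not} embed into $L^{\infty}$, so $[\xi\wedge a]$ lies only in $L^r$ for $r<2$, and to place $d_{A_0}^{\ast}[\xi\wedge a]$ in $W^{-1,2}=(W^{1,2})^{\ast}$ you would need to pair $[\xi\wedge a]$ against $d_{A_0}\phi$ with $\phi\in W^{1,2}$, i.e.~you would need $[\xi\wedge a]\in L^2$ — which fails. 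Equivalently, $\Delta_{A_0}^{-1}d_{A_0}^{\ast}$ maps $L^r$ to $W^{1,r}\not\subseteq W^{1,2}$ for $r<2$, so your iteration loses regularity at every step rather than contracting. Retreating to a fixed $W^{1,r}$ with $r<2$ does not help either: the quadratic term $\xi\,d\xi$ coming from $\exp(\xi)\,d\bigl(\exp(-\xi)\bigr)+d_{A_0}\xi$ lands in $L^s$ with $\tfrac{1}{s}=\tfrac{2}{r}-\tfrac{1}{2}>\tfrac{1}{r}$, again outside the space you iterate in; and with $\xi\notin C^0$ the element $\exp(\xi)$ is not even in $\G^{1,p}(P)$. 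Your subsequent "elliptic bootstrapping with $\xi\in L^{\infty}\cap W^{1,2}$" therefore starts from a hypothesis ($\xi\in L^{\infty}$) that was never established. This is exactly the critical-exponent obstruction that makes the $L^2$ local slice theorem hard.

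For context: the paper does not prove this statement at all — it cites Mrowka--Wehrheim \cite[Theorem 1.7]{MrowkaWehrheim}, whose argument is a substantial piece of analysis and does not proceed by an inverse function theorem at the $L^2$ level. Roughly, one exploits that gauge transformations take values in the compact group $G$ (so $g$ is a priori bounded in $L^{\infty}$ and conjugation preserves the pointwise norm of $a$, i.e.~$\|gag^{-1}\|_{L^2}=\|a\|_{L^2}$), and one obtains the Coulomb-gauge representative together with the a priori bound $\|\alpha(A)\|_{L^2}\lesssim\|A-A_0\|_{L^2}$ by a variational/compactness argument over the gauge orbit rather than by a perturbative fixed point. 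Your Steps (i)--(ii) are standard and essentially correct for the $L^p$-local statement, and your claim at the end of Step (iii) that $\|\alpha\|_{L^2}<\eps$ "by shrinking $\delta$" is itself part of what must be proved. I recommend either citing \cite{MrowkaWehrheim} as the paper does, or reworking Step (iii) along their lines.
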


\begin{proof}
For a proof we refer to \cite[Theorem 1.7]{MrowkaWehrheim}.
\end{proof}

We fix the following data.

\begin{compactenum}[(i)]
\item
A dense sequence $(A_i)_{i\in\mathbbm N}$ of irreducible smooth connections in $\A(P)$.
\item
For each $i\in\mathbbm N$ a dense sequence $(\eta_{ij})_{j\in\mathbbm N}$ of smooth $1$-forms in $\Omega^1(\Sigma,\ad(P))$ satisfying $d_{A_i}^{\ast}\eta_{ij}=0$ for all $j\in\mathbbm N$.
\item
A smooth cutoff function $\rho\colon\mathbbm R\to[0,1]$ such that $\rho=1$ on $[-1,1]$, $\supp\rho\subseteq[-4,4]$, and $\|\rho'\|_{L^{\infty}(\R)}<1$. Set $\rho_k(r)\coloneqq\rho(k^2r)$ for $k\in\mathbbm N$.
\end{compactenum}
We fix $i\in\mathbbm N$ and a constant $\eps_i>0$ such that the conclusion of Theorem \ref{locslicethm} applies for $A_0\coloneqq A_i$ and this constant $\eps_i$. Note that by assumption, $\Stab A_i=\mathbbm 1$. Theorem \ref{locslicethm} thus implies that the map
\begin{eqnarray*}
\mathfrak m_i\colon\mathcal S_{A_i}(\eps_i)\times\G^{1,p}(P)\to\A^{0,p}(P),\quad(A_i+\alpha,g)\mapsto(g^{-1})^{\ast}(A_i+\alpha)
\end{eqnarray*}
is a diffeomorphism onto its image. Hence 
\begin{eqnarray*} 
\alpha_i\colon\im\mathfrak m_i\to L^p(\Sigma,\ad(P)),\quad A\mapsto(\pr_1\circ\mathfrak m^{-1})(A)-A_i
\end{eqnarray*}
(with $\pr_1\colon\mathcal S_{A_i}(\eps_i)\times\G^{1,p}(P)\to\mathcal S_{A_i}(\eps_i)$ denoting projection) is a well-defined smooth map with image being contained in $\mathcal S_{A_i}(\eps_i)-A_i$. We extend $\alpha_i$ to a map on $\A(P)$ by setting $\alpha_i(A)=0$ for $A\in\A^{0,p}(P)\setminus\im\mathfrak  m_i$. Hence 
\begin{eqnarray}\label{modelperturbation}
\V_{\ell}^-\colon\A(P)\to\mathbbm R,\qquad A\mapsto\rho_k(\|\alpha_i(A)\|_{L^2(\Sigma)}^2)\langle\eta_{ij}+\alpha_i(A),\eta_{ij}\rangle
\end{eqnarray}
is a well-defined map for every triple $\ell=(i,j,k)\in\mathbbm N^3$. Note also that $\V_{\ell}^-$ is invariant under the action of $\G^{1,p}(P)$ by gauge transformations.

\begin{prop}\label{prop:condlocslicepert}
For every $A\in\im\mathfrak m_i$ there exists a unique $g\in\G^{1,p}(P)$ such that $g^{\ast}A-A_i=\alpha_i(A)$ and $d_{A_i}^{\ast}\alpha_i(A)=0$.
\end{prop}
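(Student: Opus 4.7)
The proposition is essentially an unraveling of the definition of $\alpha_i$ together with the $L^2$ local slice theorem, exploiting the irreducibility of the reference connections $A_i$.

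The plan is to argue as follows. First I would recall that by assumption the sequence $(A_i)_{i\in\mathbbm N}$ consists of irreducible connections, so $\Stab A_i=\{\mathbbm 1\}$ and the quotient appearing in Theorem \ref{locslicethm} collapses: the map $\mathfrak m_i\colon\mathcal S_{A_i}(\eps_i)\times\G^{1,p}(P)\to\A^{0,p}(P)$ is itself a diffeomorphism onto its image. Since $A\in\im\mathfrak m_i$ by hypothesis, there exists a unique pair $(A_i+\alpha,g)\in\mathcal S_{A_i}(\eps_i)\times\G^{1,p}(P)$ with $\mathfrak m_i(A_i+\alpha,g)=A$, i.e.\ $(g^{-1})^{\ast}(A_i+\alpha)=A$, equivalently $g^{\ast}A=A_i+\alpha$.

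Next I would unravel the defining formula $\alpha_i(A)=(\pr_1\circ\mathfrak m_i^{-1})(A)-A_i$. Applied to the pair above this reads $\alpha_i(A)=\alpha$, so the identity $g^{\ast}A-A_i=\alpha_i(A)$ is immediate. The slice condition $d_{A_i}^{\ast}\alpha_i(A)=0$ is then built into the definition of $\mathcal S_{A_i}(\eps_i)$, since $A_i+\alpha\in\mathcal S_{A_i}(\eps_i)$ forces $d_{A_i}^{\ast}\alpha=0$.

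Finally, uniqueness of $g$ is inherited from the injectivity of $\mathfrak m_i$: any other $g'\in\G^{1,p}(P)$ with $(g')^{\ast}A-A_i=\alpha_i(A)$ and $d_{A_i}^{\ast}\alpha_i(A)=0$ would yield a second preimage $(A_i+\alpha_i(A),g')$ of $A$ under $\mathfrak m_i$, contradicting that $\mathfrak m_i$ is a diffeomorphism on the unquotiented source.

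There is no real obstacle here; the only point that has to be stated carefully is why the $\Stab A_i$-quotient in Theorem \ref{locslicethm} can be ignored, and this is exactly where the irreducibility of $A_i$ (item (i) in the list of data fixed before the proposition) enters. All other assertions are direct consequences of the diffeomorphism property of $\mathfrak m_i$ and the definition of the slice.
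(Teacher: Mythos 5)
Your argument is correct and is essentially the intended one: the paper itself only cites \cite[Proposition 2.6]{Swoboda1}, but the statement is exactly the unwinding of the definition of $\alpha_i$ via the $L^2$ local slice theorem, with irreducibility of $A_i$ killing the $\Stab A_i$-quotient so that $\mathfrak m_i$ is injective on the full product $\mathcal S_{A_i}(\eps_i)\times\G^{1,p}(P)$. Your treatment of existence, of the slice condition (built into $\mathcal S_{A_i}(\eps_i)$), and of uniqueness (a second $g'$ would give a second preimage of $A$ under $\mathfrak m_i$) is complete.
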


\begin{proof}
For a proof we refer to \cite[Proposition 2.6]{Swoboda1}.
\end{proof}

\begin{prop}\label{prop:perturbsmooth}
Let $\ell=(i,j,k)\in\mathbbm N^3$ such that $k>\frac{10}{\delta_i}$. Then the map $\V_{\ell}^-\colon\A^{0,p}(P)\to\mathbbm R$ defined in \eqref{modelperturbation} is smooth.
\end{prop}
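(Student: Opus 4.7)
The plan is to establish smoothness of $\V_{\ell}^-$ at each $A_0 \in \A^{0,p}(P)$, splitting according to whether $A_0 \in \im \mathfrak m_i$ or not.

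In the \textbf{interior case} $A_0 \in \im \mathfrak m_i$, smoothness follows by composition. Theorem \ref{locslicethm} makes $\mathfrak m_i$ a diffeomorphism, so $\im \mathfrak m_i$ is open in $\A^{0,p}(P)$ and $\alpha_i$ is smooth there as an $L^p$-valued map. Composing with the continuous embedding $L^p(\Sigma) \hookrightarrow L^2(\Sigma)$ (valid since $\Sigma$ is compact and $p>2$), the squared $L^2$-norm, the smooth cutoff $\rho_k$, and the fixed $L^2$-pairing with $\eta_{ij}$ yields a smooth function on an open neighborhood of $A_0$.

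The substantive \textbf{exterior case} $A_0 \notin \im \mathfrak m_i$ requires showing that $\V_{\ell}^-$ vanishes identically on an $L^p$-neighborhood of $A_0$, so that its extension by zero is smooth. This is where the numerical hypothesis $k > 10/\delta_i$ is used. Because $\supp \rho \subseteq [-4,4]$, the factor $\rho_k(\|\alpha_i(A)\|_{L^2}^2)$ vanishes unless $\|\alpha_i(A)\|_{L^2} < 2/k < \delta_i/5$; on that set the slice representative $A_i + \alpha_i(A)$ lies well inside the ball $B_{\delta_i}(A_i) \subseteq \im \mathfrak m_i$ supplied by Theorem \ref{locslicethm}. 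My target is to prove that the set
\[
K := \bigl\{ A \in \im \mathfrak m_i \;\big|\; \|\alpha_i(A)\|_{L^2} \leq 2/k \bigr\}
\]
is closed in $\A^{0,p}(P)$. Granted this, $A_0 \notin K$ has an $L^p$-neighborhood disjoint from $K$, on which $\V_{\ell}^-$ is identically zero, and smoothness follows.

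Closedness of $K$ will be verified by contradiction. Given $A_n \in K$ with $A_n \to A_0$ in $L^p$, set $\alpha_n := \alpha_i(A_n)$ and use Proposition \ref{prop:condlocslicepert} to produce $g_n \in \G^{1,p}(P)$ with $g_n^{\ast}A_n = A_i + \alpha_n$ and $d_{A_i}^{\ast}\alpha_n = 0$. The defining relation expressing $dg_n$ linearly through $g_n$, $A_n$, $A_i$ and $\alpha_n$, combined with the $L^\infty$-bound on $g_n$ (since $G$ is compact), the $L^p$-bound on $A_n$ (from convergence), and an $L^p$-bound on $\alpha_n$ obtained from its $L^2$-bound via elliptic regularity for the Coulomb condition $d_{A_i}^{\ast}\alpha_n = 0$, produces a uniform $W^{1,p}$-bound on $g_n$. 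Since $p > 2 = \dim \Sigma$, Rellich--Kondrachov extracts a subsequence with $g_n \to g_\infty$ in $C^0$ and weakly in $W^{1,p}$, while $\alpha_n \rightharpoonup \alpha_\infty$ weakly in $L^p$ with $d_{A_i}^{\ast}\alpha_\infty = 0$ and $\|\alpha_\infty\|_{L^2} \leq 2/k < \eps_i$. Passing to the limit in $A_n = (g_n^{-1})^{\ast}(A_i + \alpha_n)$ identifies $A_0 = \mathfrak m_i(A_i + \alpha_\infty, g_\infty) \in \im \mathfrak m_i$, contradicting $A_0 \notin \im \mathfrak m_i$.

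The main obstacle is precisely this compactness argument, and within it the step of upgrading the $L^2$-smallness of $\alpha_n$ to an $L^p$-bound that feeds into a $W^{1,p}$-bound on $g_n$ rather than merely $W^{1,2}$. If a direct Coulomb-gauge elliptic estimate is not strong enough, a natural fallback is to invoke the quantitative form of the $L^2$ local slice theorem of Mrowka--Wehrheim \cite{MrowkaWehrheim} on the fixed neighborhood $B_{\delta_i}(A_i)$, which provides the needed continuity of the slice parametrization in the $L^p$ topology.
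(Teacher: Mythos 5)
Your interior case is fine: on the open set $\im\mathfrak m_i$ the map $\V_{\ell}^-$ is a composition of the smooth slice map $\alpha_i$, the continuous inclusion $L^p(\Sigma)\hookrightarrow L^2(\Sigma)$, the smooth cutoff and a continuous affine pairing. The exterior case, however, contains a genuine gap, and it sits exactly where you flagged it: the claim that the Coulomb condition $d_{A_i}^{\ast}\alpha_n=0$ together with $\|\alpha_n\|_{L^2}\leq 2/k$ yields an $L^p$ bound on $\alpha_n$ is false. The single equation $d_{A_i}^{\ast}\alpha=0$ is not elliptic for a $1$-form on a surface; its kernel contains $\ast d_{A_i}f$ for arbitrary $f$ of class $W^{1,2}$, so coclosed $1$-forms that are small in $L^2$ need not be bounded in $L^p$ for $p>2$ (one would also need control of $d_{A_i}\alpha_n$, i.e.\ of curvatures, to close an elliptic estimate). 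Without that upgrade you only get a uniform $W^{1,2}$ bound on $g_n$, which in dimension two gives neither $C^0$ compactness nor convergence in $\G^{1,p}(P)$, and the passage to the limit in $A_n=(g_n^{-1})^{\ast}(A_i+\alpha_n)$ collapses. Your fallback of invoking the quantitative Mrowka--Wehrheim theory could in principle repair this, but it imports exactly the hard low-regularity analysis the construction is designed to avoid.

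The argument the paper relies on is far more elementary and uses no compactness at all. By $\Ad$-invariance of the inner product on $\mathfrak g$, the $L^2$ norm of $1$-forms is gauge invariant: $\|g^{\ast}A'-g^{\ast}A\|_{L^2(\Sigma)}=\|A'-A\|_{L^2(\Sigma)}$. Hence if $A$ lies in your set $K$, so that $g^{\ast}A=A_i+\alpha_i(A)$ with $\|\alpha_i(A)\|_{L^2(\Sigma)}\leq 2/k<\delta_i/5$, and if $\|A'-A\|_{L^2(\Sigma)}<\delta_i-2/k$, then $\|g^{\ast}A'-A_i\|_{L^2(\Sigma)}<\delta_i$, so $g^{\ast}A'\in B_{\delta_i}(A_i)\subseteq\im\mathfrak m_i$ by Theorem \ref{locslicethm}, and therefore $A'\in\im\mathfrak m_i$ by gauge invariance of the image. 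In other words, $K$ lies at $L^2$ distance at least $\delta_i-2/k>0$ from $\A^{0,p}(P)\setminus\im\mathfrak m_i$; since the $L^2$ norm is continuous in the $L^p$ topology, every $A_0\notin\im\mathfrak m_i$ has an $L^p$ neighbourhood on which $\V_{\ell}^-$ vanishes identically (and likewise every point of $\im\mathfrak m_i$ near its boundary has a neighbourhood contained in $\im\mathfrak m_i$). This triangle-inequality observation is where the hypothesis $k>10/\delta_i$ is actually consumed, it proves the closedness of $K$ that you were after, and it replaces your entire weak-convergence argument.
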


\begin{proof}
For a proof we refer to \cite[Proposition 2.7]{Swoboda1}.
\end{proof}

\begin{prop}\label{prop:L2estgrad}
Let $A_0\in\A(P)$ and $p>2$. There exist constants $c(A_0)$, $c(A_0,p)$ and $\delta(A_0,p)$ such that the estimates 
\begin{align*}
\|\alpha(A)\|_{W^{1,p}(\Sigma)}&\leq c(A_0,p)\big(1+\|F_A\|_{L^p(\Sigma)}\big),\\
\|\nabla\V^-(A)\|_{C^0(\Sigma)}&\leq c(A_0)\big(1+\|F_A\|_{L^3(\Sigma)}\big),\\
\|d_A\nabla\V^-(A)\|_{L^p(\Sigma)}&\leq c(A_0,p)\big(1+\|F_A\|_{L^p(\Sigma)}+\|\alpha(A)\|_{L^{2p}(\Sigma)}^2\big)
\end{align*}
are satisfied for all $A\in\A^{0,p}(P)$ with $\|\alpha(A)\|_{L^2(\Sigma)}<\delta(A_0,p)$. 
\end{prop}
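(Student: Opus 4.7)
The three estimates all reduce to two ingredients: the Coulomb-slice representation $g^{\ast}A = A_i + \alpha$ with $\alpha = \alpha_i(A)$ and $d_{A_i}^{\ast}\alpha = 0$ (Proposition \ref{prop:condlocslicepert}), and the explicit form \eqref{modelperturbation} of $\V_{\ell}^-$. My plan is to prove them in sequence, as each feeds the next.

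\emph{Step 1 (the $W^{1,p}$ bound on $\alpha$).} By gauge invariance of curvature, $\|F_{A_i+\alpha}\|_{L^p} = \|F_A\|_{L^p}$, and $F_{A_i+\alpha} = F_{A_i} + d_{A_i}\alpha + \tfrac12[\alpha\wedge\alpha]$ gives
$$\|d_{A_i}\alpha\|_{L^p} \leq \|F_A\|_{L^p} + \|F_{A_i}\|_{L^p} + \tfrac12\|[\alpha\wedge\alpha]\|_{L^p}.$$
Combined with $d_{A_i}^{\ast}\alpha = 0$ and the standard elliptic estimate for the first-order operator $d_{A_i}\oplus d_{A_i}^{\ast}$ on $\Omega^1(\Sigma,\ad(P))$ (the irreducibility of $A_i$ removes the kernel issue on the Coulomb slice), one obtains
$$\|\alpha\|_{W^{1,p}(\Sigma)} \leq C(A_i,p)\bigl(1 + \|F_A\|_{L^p} + \|[\alpha\wedge\alpha]\|_{L^p} + \|\alpha\|_{L^p}\bigr).$$
Since $p>2$ on the $2$-dimensional surface $\Sigma$, $W^{1,p}\hookrightarrow L^{\infty}$, so $\|[\alpha\wedge\alpha]\|_{L^p} \leq C\|\alpha\|_{W^{1,p}}\|\alpha\|_{L^p}$, and interpolation yields $\|\alpha\|_{L^p} \leq C\|\alpha\|_{L^2}^{\theta}\|\alpha\|_{W^{1,p}}^{1-\theta}$ for some $\theta\in(0,1)$. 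Taking $\delta(A_0,p)$ small, both cubic/quadratic terms absorb into the left-hand side, proving the first inequality.

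\emph{Step 2 (the $C^0$ bound on $\nabla\V^-$).} Differentiating \eqref{modelperturbation} and dualising in $L^2$, the gradient has the schematic form
$$\nabla\V_{\ell}^-(A) = \Ad(g^{-1})\,\Pi_{A_i}\bigl(c_1(A)\,\alpha + c_2(A)\,\eta_{ij}\bigr),$$
where $\Pi_{A_i}$ is the $L^2$-orthogonal projection onto $\ker d_{A_i}^{\ast}$ and $c_1, c_2$ are scalar coefficients built from $\rho_k$, $\rho_k'$, and the inner products $\langle \eta_{ij}+\alpha,\eta_{ij}\rangle_{L^2}$, all of which are uniformly bounded on the support of $\rho_k$. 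The projection $\Pi_{A_i}$ is pointwise bounded by a $W^{1,p}$-to-$C^0$ estimate on the elliptic regularity of $d_{A_i}^{\ast}$. Taking $p=3$ and invoking Step~1 together with $W^{1,3}(\Sigma)\hookrightarrow C^0(\Sigma)$ yields (ii).

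\emph{Step 3 (the $L^p$ bound on $d_A\nabla\V^-$).} Applying $d_A$ to the expression in Step 2 and using the commutation $d_A\circ\Ad(g^{-1}) = \Ad(g^{-1})\circ d_{A_i+\alpha}$ with $d_{A_i+\alpha} = d_{A_i} + [\alpha\wedge\,\cdot\,]$, the problem reduces to $L^p$-estimates on $d_{A_i}\alpha$, $[\alpha\wedge\alpha]$, $d_{A_i}\eta_{ij}$, and $[\alpha\wedge\eta_{ij}]$. The first is controlled by $1 + \|F_A\|_{L^p}$ via the curvature identity of Step~1; the quadratic $[\alpha\wedge\alpha]$ contributes the $\|\alpha\|_{L^{2p}}^2$ term; the remaining terms are absorbed into the constant by smoothness of $\eta_{ij}$ and Step~1. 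This gives (iii).

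The principal difficulty is the bootstrap absorption in Step~1, which requires $\delta(A_0,p)$ to be chosen in terms of the elliptic constant of $d_{A_i}\oplus d_{A_i}^{\ast}$ and the Sobolev embedding constant for $W^{1,p}\hookrightarrow L^{\infty}$. Once that is in place, Steps~2 and~3 amount to bookkeeping derivatives through the explicit formula \eqref{modelperturbation}, following the pattern of \cite{Swoboda1}.
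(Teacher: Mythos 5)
The paper itself offers no argument for this proposition --- it simply cites \cite[Proposition A.4]{Swoboda1} --- so your proposal can only be judged on its own terms; the route you choose (Coulomb slice, elliptic estimate for $d_{A_i}\oplus d_{A_i}^{\ast}$, then reading the second and third bounds off the explicit formula for the gradient of \eqref{modelperturbation}) is certainly the intended one. There is, however, a genuine gap in Step 1 as written. You bound $\|[\alpha\wedge\alpha]\|_{L^p}\leq C\|\alpha\|_{W^{1,p}}\|\alpha\|_{L^p}$ and then interpolate $\|\alpha\|_{L^p}\leq C\|\alpha\|_{L^2}^{\theta}\|\alpha\|_{W^{1,p}}^{1-\theta}$, which leaves you with $X\leq C_1+C\delta^{\theta}X^{2-\theta}$ for $X:=\|\alpha\|_{W^{1,p}}$ and $2-\theta>1$. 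A superlinear term cannot be ``absorbed'' by smallness of $\delta$ alone: that inequality is satisfied by arbitrarily large $X$, so without an a priori bound on $X$ or a continuity argument the conclusion does not follow. The repair is to interpolate so that the quadratic term is \emph{linear} in $X$: on the two-dimensional $\Sigma$ the Ladyzhenskaya--Gagliardo--Nirenberg inequality gives $\|\alpha\|_{L^{2p}}\leq C\|\alpha\|_{W^{1,p}}^{1/2}\|\alpha\|_{L^2}^{1/2}$, hence $\|[\alpha\wedge\alpha]\|_{L^p}\leq C\|\alpha\|_{L^{2p}}^2\leq C\delta\,\|\alpha\|_{W^{1,p}}$, which absorbs legitimately once $\delta(A_0,p)$ is small (and the leftover $\|\alpha\|_{L^p}$ term is handled the same way, with Young's inequality). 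Incidentally, irreducibility of $A_i$ is not what controls the elliptic estimate --- it concerns the kernel of $d_{A_i}$ on $0$-forms, not $A_i$-harmonic $1$-forms; the lower-order term $\|\alpha\|_{L^p}$ you already carry is what accounts for the kernel.

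A secondary point: your schematic formula for $\nabla\V_{\ell}^-$ in Step 2 is too coarse. Differentiating the gauge-fixing map $A\mapsto\alpha_i(A)$ does not produce a fixed $L^2$ projection; it produces the linearization of the slice condition, and the resulting gradient contains a genuinely nonlocal term of the form $T_{A_0,\alpha}^{\ast}\big(\ast[\alpha\wedge\ast\eta_{ij}]\big)$, where $T_{A_0,\alpha}$ solves the linearized Coulomb equation. Your $C^0$ bound in Step 2 and the $L^p$ bound on $d_A$ of this term in Step 3 therefore require uniform mapping properties of $T_{A_0,\alpha}$ and its adjoint for $\|\alpha\|_{L^2}<\delta$, which you neither state nor prove; these are exactly the auxiliary estimates supplied in \cite[Appendix A]{Swoboda1}. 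With the corrected absorption and those mapping properties in hand, your Steps 2 and 3 do reduce to the bookkeeping you describe.
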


\begin{proof}
For a proof we refer to \cite[Proposition A.4]{Swoboda1}.
\end{proof}

\begin{rem}\label{rem:perturbationsYMnonneg}
\upshape
In the following we shall admit only those perturbations $\V_{\ell}^-$ which meet the assumptions of Propositions \ref{prop:perturbsmooth} and \ref{prop:L2estgrad}. These are precisely satisfied for triples $\ell=(i,j,k)\in\mathbbm N^3$ such that $k>\max\{\frac{10}{\delta_i},\frac{2}{\delta(A_i,p)}\}$ where $\delta_i$ denotes the constant of Proposition \ref{prop:perturbsmooth}, and $\delta(A_i,p)$ is as in Proposition \ref{prop:L2estgrad}. Moreover, the Cauchy--Schwarz inequality implies that $\langle\eta_{ij}+\alpha(A),\eta_{ij}\rangle\geq0$ if $\|\alpha(A)\|_{L^2(\Sigma)}\leq\|\eta_j\|_{L^2(\Sigma)}$, and hence the map $\V_{\ell}^-$ is non-negative for sufficiently large indices $k$. For the remainder of this article we allow only for triples $(i,j,k)\in\mathbbm N^3$ such that $k$ satisfies these conditions and renumber the subset of such triples by integers $\ell\in\mathbbm N$. 
\end{rem}

Given $\ell\in\mathbbm N$, we fix a constant $C_{\ell}>0$ such that the following conditions are satisfied. 
 
\begin{compactenum}[(i)]
\item
$\sup_{A\in\A(P)}|\V_{\ell}^-(A)|\leq C_{\ell}$,
\item
$\sup_{A\in\A(P)}\|\nabla\V_{\ell}^-(A)\|_{L^2(\Sigma)}\leq C_{\ell}$,
\item
$\|\nabla\V_{\ell}^-(A)\|_{C^0(\Sigma)}\leq C_{\ell}(1+\|F_A\|_{L^3(\Sigma)})$ for all $A\in\A(P)$.
\item
$\|H_A\V_{\ell}^-\beta\|_{L^p(\Sigma)}\leq C_{\ell}(1+\|F_A\|_{L^3(\Sigma)})\|\beta\|_{L^p(\Sigma)}$ for all $A\in\A(P)$, $\beta\in\Omega^1(\Sigma,\ad(P))$, and $1<p<\infty$. 
\end{compactenum}
Here the notation $\nabla\V_{\ell}^-\colon\A(P)\to\Omega^1(\Sigma,\ad(P))$ and $H_A\V_{\ell}^-\colon\Omega^1(\Sigma,\ad(P))\to\Omega^1(\Sigma,\ad(P))$ refers to the $L^2$ gradient and Hessian (at the point $A\in\A(P)$) of the map $\V_{\ell}^-$. The existence of the constant $C_{\ell}$ has been shown in \cite[Proposition A.6]{Swoboda1}. The \emph{universal space of perturbations} is the normed linear space
\begin{eqnarray*} 
Y\coloneqq\Big\{\V^-\coloneqq\sum_{\ell=1}^{\infty}\lambda_{\ell}\V_{\ell}^-\,\Big|\,\lambda_{\ell}\in\R\;\textrm{and}\;\|\V^-\|\coloneqq\sum_{\ell=1}^{\infty}C_{\ell}|\lambda_{\ell}|<\infty\Big\}.
\end{eqnarray*}
It is a separable Banach space isomorphic to the space $\ell^1$ of absolutely summable real sequences. Further relevant properties of the perturbations $\V_{\ell}^-$ are discussed in \cite[Appendix A]{Swoboda1}.

\subsubsection*{Perturbations of the loop group energy functional}

We shall follow closely Salamon and Weber \cite{SalWeb} in our construction of a Banach space $Y^+$ of perturbations of the loop group energy functional $\E$. Let us fix the following data.

\begin{compactenum}[(i)]
\item
A dense sequence $(x_i)_{i\in\mathbbm N}$ of points in $\Omega G$.
\item 
For each $i\in\mathbbm N$ a dense sequence $(\eta_{ij})_{j\in\mathbbm N}$ in $T_{x_i}(\Omega G)$.
\item 
A smooth cut-off function $\rho\colon\mathbbm R\to[0,1]$ supported in $[-4,4]$, and satisfying $\rho=1$ on $[-1,1]$ and $\|\rho'\|_{L^{\infty}(\mathbbm R)}<1$. For $k\in\mathbbm N$, set $\rho_{k}(r):=\rho(k^2r)$.
\end{compactenum}

Denote by $\iota>0$ the injectivity radius of the compact Riemannian manifold $G$. Fix a further cut-off function $\beta$ supported in $[-\iota^2,\iota^2]$ such that $\beta=1$ on $\big[-\frac{\iota^2}{4},\frac{\iota^2}{4}\big]$. For $x_i\in\Omega G$ as in (i) above and $q\in G$ within distance $\iota$ of $x_i(t)$, let $\xi_q^i(t)\in T_{x_i(t)}G$ be uniquely determined by $q=\exp_{x_i(t)}\xi_q^i(t)$. For each multiindex $\ell=(i,j,k)\in\mathbbm N^3$ we define the smooth map
\begin{eqnarray*}
\V_{\ell}^+\colon\Omega G\to\mathbbm R,\quad x\mapsto\rho_k\big(\|x-x_i\|_{L^2(S^1)}^2\big)\int_0^1V_{ij}(t,x(t))\,dt,
\end{eqnarray*}
where
\begin{eqnarray*}
V_{ij}(t,q):=\begin{cases}
\beta(|\xi_q^i(t)|^2)\langle\eta_{ij}(t)+\xi_q^i(t),\eta_{ij}(t)\rangle,&\textrm{if}\,|\xi_q^i(t)|<\iota,\\
0,&\textrm{else}.
\end{cases}
\end{eqnarray*}

The $L^2$ distance appearing in the argument of $\rho_k$ above refers to the $L^2$ distance induced after isometrically embedding the manifold $G$ in some ambient euclidian space $\mathbbm R^N$. Note that $\V_{\ell}^+$ extends uniquely to a map $\V_{\ell}^+\colon\Lambda G\to\R$ on the free loop group, which is invariant under the free action $h\cdot x\mapsto hx$ of $G$ on $\Lambda G$.

\begin{rem}\label{rem:perturbationsEnonneg}
\upshape
Because $\langle\eta_{ij}+\xi_q^i,\eta_{ij}\rangle\geq0$ if $\|\xi_q^i\|_{L^2(S^1)}\leq\|\eta_{ij}\|_{L^2(S^1)}$, it follows that the map $\V_{\ell}$ is non-negative for sufficiently large indices $k$ (for given pair $(i,j)$). We henceforth consider only those multiindices $\ell=(i,j,k)$ which satisfy this condition, and renumber the set of such triples $(i,j,k)$ by integers $\ell\in\mathbbm N$.
\end{rem}

Let $Y^+$ denote the real vector space spanned by the maps $\V_{\ell}^+$, $\ell\in\mathbbm N$. We endow it with a norm analogously to that of the space $Y^-$ (cf.~the previous section), turning it into a separable Banach space isomorphic to the space $\ell^1$ of absolutely summable real sequences (cf.~\cite[Section 7.1]{Web} for details).

\subsection{Critical manifolds}\label{sect:critmanifolds}

We introduce some further notation concerning the set of critical points of the functionals $\YM$ and $\E$. Let
\begin{eqnarray*} 
\crit(\YM)\coloneqq\{A\in\A^{1,p}(P)\mid d_A^{\ast}F_A=0\}
\end{eqnarray*}
denote the set of critical points of $\YM$, the equation $d_A^{\ast}F_A=0$ being understood in the weak sense. Similarly, the notation $\crit(\YM^{\V^-})$ refers to the set of critical points of the perturbed Yang--Mills functional. We furthermore let
\begin{eqnarray*}
\crit(\E)\coloneqq\{x\in W^{1,p}(S^1,G)\mid\nabla_t\partial_tx=0\}
\end{eqnarray*}
denote the set of critical points of $\E$, where we again interpret the equation $\nabla_t\partial_tx=0$ in the weak sense. The set of critical points of the perturbed energy functional is denoted by $\crit(\E^{\V^+})$. We subsequently make use of the holonomy map $\Phi\colon\A(P)\to\Lambda G$ as given by \eqref{mapPhi}. A discussion of its properties is postponed to \ref{holmapappendix}. Let $\widehat{\CR}(\YM)$ and $\widehat{\CR}(\E)$ denote the set of connected components of $\crit(\YM)$, respectively of $\crit(\E)$. The group $\G_0(P)$ of based gauge transformations acts freely on $\A(P)$, hence on $\crit(\YM)$. We define $\CR(\YM)$ as the set of connected components of $\crit(\YM)/\G_0(P)$ in $\A(P)/\G_0(P)$. It is a standard fact that every such connected component is a finite-dimensional submanifold of $\A(P)/\G_0(P)$ diffeomorphic to some homogeneous space $G/H$. Likewise, the group $G$ acts freely on $\Lambda G$ and on $\widehat{\CR}(\E)$. We denote by $\CR(\E)=\widehat{\CR}(\E)/G$ the set of connected components of $\crit(\E)/G$ in $\Lambda G/G\cong\Omega G$. Furthermore, the holonomy map $\Phi$ induces a bijection between $\CR(\YM)$ and $\CR(\E)$ which preserves the action filtration given on both these sets, cf.~Theorem \ref{critcorr}. For the remainder of this section we fix a regular value $a$ of $\YM$ and put $b\coloneqq4a/\pi$, which is a regular value of $\E$. Furthermore, we denote
\begin{align*}
\crit^a(\YM):=&\{A\in\crit(\YM)\mid\YM(A)\leq a\},\\
\crit^b(\E):=&\{x\in\crit(\E)\mid\E(x)\leq b\}.
\end{align*}

We introduce the notation $\widehat{\CR}^a(\YM)$, $\CR^a(\YM)$, $\widehat{\CR}^b(\E)$, and $\CR^b(\E)$ for the intersection of $\widehat{\CR}(\YM)$ etc.~as defined above with the sublevel set $\crit^a(\YM)$, respectively $\crit^b(\E)$. As a consequence of Theorem \ref{thmdecompinequality} the holonomy map $\Phi$ induces a bijection between $\widehat{\CR}^a(\YM)$ and $\CR^b(\E)$. For each critical manifold $\hat{\mathcal C}\in\widehat{\CR}^a(\YM)$ we fix a closed $L^2$ neighborhood $U_{\hat{\mathcal C}}$ of $\hat{\mathcal C}$ such that $U_{\hat{\mathcal C}_1}\cap U_{\hat{\mathcal C}_2}=\emptyset$ whenever $\hat{\mathcal C}_1\neq\hat{\mathcal C}_2$. Because the set $\widehat{\CR}^a(\YM)$ is finite (cf.~\cite{Swoboda1} for a proof) it follows that such a choice is possible. We next choose for each $\mathcal C\in\CR^b(\E)$ a sufficiently small closed $L^2$ neighborhood $U_{\mathcal C}$ of $\mathcal C$ such that $U_{\mathcal C_1}\cap U_{\mathcal C_2}=\emptyset$ if $\mathcal C_1\neq\mathcal C_2$, and $\Phi(U_{\hat{\mathcal C}})\cap U_{\mathcal C}=\emptyset$ for all $\hat{\mathcal C}\in\widehat{\CR}^a(\YM)$ with $\Phi(\hat{\mathcal C})\neq\mathcal C$.

\begin{defn}\label{def:regperturbation}\upshape
Let $a$ be the regular value of $\YM$ as fixed above. We call a perturbation $\V^-=\sum_{\ell=1}^{\infty}\lambda_{\ell}\V_{\ell}^-\in Y^-$ \emph{$a$-admissible} if it satisfies 
\begin{eqnarray*}
\supp\V_{\ell}^-\cap U_{\hat{\mathcal C}}\neq\emptyset\quad\textrm{for some}\quad\hat{\mathcal C}\in\widehat{\CR}^a(\YM)\quad\Longrightarrow\quad\lambda_{\ell}=0.
\end{eqnarray*}
For the regular value $b=4a/\pi$ of $\E$ we analogously define the subspace of  \emph{$b$-admissible} perturbations of $Y^+$. We let $Y_a^-\times Y_b^+\eqqcolon Y_a\subseteq Y$ denote the subspace of pairs $(\V^-,\V^+)$ where $\V^-$ is $a$-admissible and $\V^+$ is $b$-admissible.
\end{defn}

It is straightforward to show that the spaces $Y_a^-$, respectively $Y_b^+$ are closed subspaces of the Banach spaces $Y^{\pm}$, and hence $Y_a$ is a closed subspace of $Y$. The following proposition shows that adding a small $a$-admissible perturbation to $\YM$ leaves the set of critical points below level $a$ unchanged.

\begin{prop}\label{prop:samecritpoints} 
Let $a$ be the regular value of $\YM$ as fixed above. Then there is a constant $\delta=\delta(a)>0$ with the following significance. Assume $\V^-\in Y_a^-$ and $\|\V^-\|<\delta$. Then it holds,
\begin{multline*}
\crit(\YM^{\V^-})\cap\{A\in\A^{1,p}(P)\mid\YM(A)<a\}\\
=\crit(\YM)\cap\{A\in\A^{1,p}(P)\mid\YM(A)<a\}.
\end{multline*}
\end{prop}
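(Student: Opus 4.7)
The key idea is that $a$-admissibility forces $\V^-$ to vanish identically on an open neighbourhood of every critical manifold below level $a$, while a quantitative Palais--Smale argument, valid for $\|\V^-\|$ small, rules out spurious critical points of $\YM^{\V^-}$ in the complement of these neighbourhoods.

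The inclusion ``$\supseteq$'' is the easy half and already yields an unconditional statement. If $A\in\crit(\YM)$ with $\YM(A)<a$, then $A$ lies in some $\hat{\mathcal C}\in\widehat{\CR}^a(\YM)$, hence in $U_{\hat{\mathcal C}}$. Since $\V^-\in Y_a^-$, every basic perturbation $\V_{\ell}^-$ appearing with non-zero coefficient in $\V^-$ has $\supp\V_{\ell}^-\cap U_{\hat{\mathcal C}}=\emptyset$, so $\V^-\equiv 0$ and in particular $\nabla\V^-(A)=0$ on a neighbourhood of $A$. Thus $\nabla\YM^{\V^-}(A)=d_A^{\ast}F_A+\nabla\V^-(A)=0$.

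For the inclusion ``$\subseteq$'' I would argue by contradiction. If no $\delta>0$ works, choose sequences $\V_n^-\in Y_a^-$ with $\|\V_n^-\|\to 0$ and $A_n\in\crit(\YM^{\V_n^-})$ with $\YM(A_n)<a$ but $d_{A_n}^{\ast}F_{A_n}\not=0$. Then $A_n\notin\bigcup_{\hat{\mathcal C}}U_{\hat{\mathcal C}}$, for otherwise $\nabla\V_n^-(A_n)=0$ would give $d_{A_n}^{\ast}F_{A_n}=0$. The critical point equation combined with property (ii) of the constants $C_{\ell}$ defining the norm on $Y$ yields
\begin{equation*}
\|d_{A_n}^{\ast}F_{A_n}\|_{L^2(\Sigma)}=\|\nabla\V_n^-(A_n)\|_{L^2(\Sigma)}\leq\sum_{\ell}|\lambda_{\ell}^{(n)}|\,C_{\ell}=\|\V_n^-\|\longrightarrow 0.
\end{equation*}
Together with the uniform bound $\|F_{A_n}\|_{L^2(\Sigma)}^2=2\YM(A_n)<2a$ this places $(A_n)$ in a Palais--Smale situation for $\YM$. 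By Uhlenbeck's compactness theorem (applied over $\Sigma=S^2$, where no energy bubbling occurs in this regime) and Coulomb gauge fixing relative to a reference smooth connection in each $\hat{\mathcal C}$, there exist based gauge transformations $g_n\in\G_0(P)$ such that a subsequence of $g_n^{\ast}A_n$ converges in $W^{1,p}$ to some $A_{\infty}$. Passing to the limit in the equation $d_{g_n^{\ast}A_n}^{\ast}F_{g_n^{\ast}A_n}=-(g_n^{-1})^{\ast}\nabla\V_n^-(A_n)\to 0$ shows $A_{\infty}\in\crit(\YM)$; by $L^2$ lower semicontinuity $\YM(A_{\infty})\leq a$, and since $a$ is regular, $A_{\infty}\in\hat{\mathcal C}$ for some $\hat{\mathcal C}\in\widehat{\CR}^a(\YM)$. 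But then $g_n^{\ast}A_n\in U_{\hat{\mathcal C}}$ for large $n$, and since $U_{\hat{\mathcal C}}$ is gauge-invariant, also $A_n\in U_{\hat{\mathcal C}}$, contradicting the choice of $A_n$.

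The main obstacle is the gauge-fixing compactness step for the sequence $(A_n)$ of \emph{approximate} Yang--Mills connections. This is where the specific features of the setup pay off: the $C^0$ control on $\nabla\V_n^-$ from Proposition \ref{prop:L2estgrad} upgrades the $L^2$ smallness of $d_{A_n}^{\ast}F_{A_n}$ to $L^p$ smallness, and the neighbourhoods $U_{\hat{\mathcal C}}$ can be taken $L^2$-open and $\G_0(P)$-invariant, so that the contradiction ``$A_n\in U_{\hat{\mathcal C}}$'' really reaches the hypothesis in the original (pre-gauge) frame. Once compactness modulo gauge is granted, the argument closes as above, and the required $\delta(a)$ is any positive number smaller than the Palais--Smale constant
\begin{equation*}
\eps(a):=\inf\bigl\{\|d_A^{\ast}F_A\|_{L^2(\Sigma)}\,\big|\,A\in\A^{1,p}(P),\;\YM(A)\leq a,\;A\notin\bigcup_{\hat{\mathcal C}\in\widehat{\CR}^a(\YM)}U_{\hat{\mathcal C}}\bigr\}>0.
\end{equation*}
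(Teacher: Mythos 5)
Your argument is correct and is essentially the intended one: the paper gives no proof here but defers to \cite[Proposition 2.9]{Swoboda1}, and the two ingredients there are exactly yours --- an $a$-admissible perturbation vanishes, together with its gradient, on the neighbourhoods $U_{\hat{\mathcal C}}$ (giving the unconditional inclusion ``$\supseteq$''), while Uhlenbeck compactness in the subcritical dimension $2$ shows that $\|d_A^{\ast}F_A\|_{L^2(\Sigma)}$ is bounded below by a positive constant $\eps(a)$ on the part of the sublevel set lying outside $\bigcup_{\hat{\mathcal C}}U_{\hat{\mathcal C}}$, so that $\delta(a)<\eps(a)$ works. The only points to make explicit in a full write-up are the ones you already flag yourself: the $U_{\hat{\mathcal C}}$ must be chosen gauge-invariantly and so as to contain a uniform $L^2$-tube around each $\hat{\mathcal C}$ (so the contradiction transfers from the Uhlenbeck gauge back to $A_n$, using gauge invariance of the $L^2$-distance and of $\crit(\YM)$), and the positivity of $\eps(a)$ is precisely the Palais--Smale statement that carries all the analytic weight.
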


\begin{proof}
For a proof we refer to \cite[Proposition 2.9]{Swoboda1}. 
\end{proof}

The analogous statement holds true for the energy functional $\E$.

\begin{prop}\label{prop:samecritpointsE}
Let $b$ be the regular value of $\E$ as fixed above. Then there is a constant $\delta=\delta(b)>0$ with the following significance. Assume $\V^+\in Y_b^+$ and $\|\V^+\|<\delta$. Then,
\begin{eqnarray*}
\crit(\E^{\V^+})\cap\{x\in\Lambda G\mid\E(x)<b\}=\crit(\E)\cap\{x\in\Lambda G\mid \E(x)<b\}.
\end{eqnarray*}
\end{prop}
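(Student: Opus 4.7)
The plan is to mimic the proof of the analogous Yang--Mills statement (Proposition \ref{prop:samecritpoints}) in the simpler setting of the loop group. The proposition has two inclusions, and the easy one is essentially a tautology: if $x \in \crit(\E)$ with $\E(x) < b$, then $x$ lies in some component $\mathcal C \in \CR^b(\E)$ and therefore in the interior of the neighborhood $U_{\mathcal C}$; by $b$-admissibility of $\V^+ \in Y_b^+$, every basis perturbation $\V_\ell^+$ appearing with nonzero coefficient vanishes on $U_{\mathcal C}$, hence $\nabla \V^+(x) = 0$, and $x$ is a critical point of $\E^{\V^+}$ as well. No smallness of $\|\V^+\|$ is required here.

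For the other inclusion I would argue by contradiction. Suppose no $\delta(b) > 0$ works: then there are perturbations $\V^+_n \in Y_b^+$ with $\|\V^+_n\| \to 0$ and points $x_n \in \Lambda G$ solving $\nabla_t \partial_t x_n = \nabla \V^+_n(x_n)$ with $\E(x_n) < b$, but $x_n \notin \crit(\E)$. From the bound on $\E(x_n)$ and the uniform $C^0$-estimate on $\nabla \V^+_n$ (the analogue for $\V^+$ of property (iii) in the list preceding the definition of $Y$, established in \cite{SalWeb,Web}), the sequence $x_n$ is bounded in $W^{2,p}(S^1,G)$. After dividing out the $G$-action (which is free and proper on the relevant sublevel set of the $G$-invariant functional $\E$) I would extract a subsequence converging in $W^{1,p}(S^1,G)$, and then in $C^\infty$ by standard elliptic bootstrapping for the ODE $\nabla_t \partial_t x = 0$, to a limit $x_\infty$. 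Passing to the limit in the perturbed geodesic equation (using $\|\V^+_n\|\to 0$ and the uniform gradient bound) yields $\nabla_t \partial_t x_\infty = 0$, i.e.\ $x_\infty \in \crit(\E)$, and by lower semicontinuity $\E(x_\infty) \le b$. Since $b$ is a regular value of $\E$, the inequality is strict and $x_\infty$ lies in some component $\mathcal C \in \CR^b(\E)$.

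Now comes the decisive step: the neighborhood $U_{\mathcal C}$ was fixed as an open $L^2$-neighborhood of $\mathcal C$, and since $x_n \to x_\infty \in \mathcal C$ (after the gauge/$G$-action adjustment and a further translation bringing the limit into $\mathcal C$ itself), for all sufficiently large $n$ the representative of $x_n$ lies in $U_{\mathcal C}$. By $b$-admissibility of $\V^+_n$ this forces $\nabla \V^+_n(x_n) = 0$, and hence $\nabla_t \partial_t x_n = 0$, contradicting $x_n \notin \crit(\E)$. This contradiction produces the desired $\delta(b)$.

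The main obstacle I anticipate is the compactness/convergence step, in particular handling the $G$-action cleanly: $x_n$ need not itself converge, only some $G$-translate $h_n x_n$ does, and one must verify that the open neighborhoods $U_{\mathcal C}$ (which were chosen inside $\Lambda G/G$) are $G$-invariant when pulled back to $\Lambda G$ so that membership of $h_n x_n$ in $U_{\mathcal C}$ is equivalent to membership of $x_n$. Once this is arranged, the quantitative control on $\nabla \V^+$ coming from the construction of $Y^+$ makes the remaining estimates routine, and one may simply appeal to the argument of \cite[Proposition 2.9]{Swoboda1} transcribed verbatim to the loop-space setting treated in \cite{Web}.
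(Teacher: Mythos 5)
Your argument is correct and is essentially the same as the intended one: the paper does not spell out a proof but defers to \cite[Lemma 7.4]{Web}, which rests on exactly this combination of a Palais--Smale/compactness argument (forcing almost-critical loops of energy below $b$ into the neighborhoods $U_{\mathcal C}$) with the support condition built into $b$-admissibility. The only superfluous step is dividing out the $G$-action: since $G$ is compact, the sequence $(x_n)$, bounded in $W^{2,p}(S^1,G)$ by the energy bound and the uniform $C^0$-bound on $\nabla\V_n^+$, already admits a $C^1$-convergent subsequence in $\Lambda G$ itself, and the $G$-invariance of the $\V_\ell^+$ makes the pulled-back neighborhoods $G$-invariant in any case.
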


\begin{proof}
For a proof we refer to \cite[Lemma 7.4]{Web}.  
\end{proof}

\subsection{Gradient flow lines}
 
\setcounter{footnote}{0}

In this section we fix perturbations $\V^{\pm}\in Y^{\pm}$. As discussed in \cite{Swoboda1} it is convenient to introduce into the $\mathcal G(P)$-invariant Yang--Mills gradient flow equation an additional \emph{gauge fixing term} $-d_A\Psi$, $\Psi\in\Omega^0(\Sigma,\ad(P))$. Solutions of this new flow equation are then invariant under \emph{time-dependent} gauge transformations. This point requires to introduce some further notation. For an interval $I\subseteq\R$ we denote by $\hat P_I\coloneqq I\times P$ the trivial extension of the principle $G$-bundle $P$ to the base manifold $I\times\Sigma$, and set $\hat P\coloneqq\hat P_{\R^-}$. We let $\G(\hat P)$ denote the group of smooth gauge transformations of the principle $G$-bundle $\hat P$ and call this the group of time-dependent gauge-transformations (and similarly for $\G(\hat P_I)$). A pair $(A,\Psi)\in C^{\infty}(\R,\A(P)\times\Omega^0(\Sigma,\ad(P)))$ can naturally be identified with the connection $\AA=A+\Psi\,ds\in\A(\hat P)$. The action of the group $\G(\hat P)$ on $\A(\hat P)$ by gauge transformations is given by
\begin{eqnarray}\label{eq:extendedgaugetransf}
g^{\ast}\AA=g^{\ast}A+(g^{-1}\Psi g+g^{-1}\partial_sg)\wedge ds.
\end{eqnarray}

\begin{defn}\label{defnEYF}\upshape
Let $\V\in Y$ be a perturbation. The $\G(\hat P)$-\emph{invariant, perturbed Yang--Mills gradient flow} is the nonlinear PDE
\begin{eqnarray}\label{EYF}
0=\partial_sA+d_A^{\ast}F_A-d_A\Psi+\nabla\mathcal V^-(A)
\end{eqnarray}
for connections $\AA=A+\Psi\,ds\in\A(\hat P)$.
\end{defn}

One easily checks that with $\AA\in\A(\hat P)$, also $g^{\ast}\AA$ is a solution of \eqref{EYF}, for every $g\in\G(\hat P)$.

\begin{defn}\label{defnEEF}\upshape
The \emph{perturbed loop group gradient flow equation} is the nonlinear PDE
\begin{eqnarray}\label{EEF}
0=\partial_sx-\nabla_t\partial_tx+\nabla\V^+(x)
\end{eqnarray}
for a smooth path $x\colon s\mapsto x(s)\in\Lambda G$ of free loops. 
\end{defn}

Equation \eqref{EEF} is clearly invariant under the action of the group $G$ on $\Lambda G$ via $(h\cdot x)(t)=hx(t)$. The proper analytical setup for a study of the perturbed Yang--Mills and loop group gradient flow equations will be introduced in Section \ref{sect:nonlinsetup}.

\subsection{Morse homologies for the Yang--Mills and heat flows}\label{sect:Morsehomologies}

Morse homology theories based on the perturbed Yang--Mills and loop group gradient flows have been developed by Weber, respectively the author in \cite{Web, Swoboda1}. We here only give a brief description of both these Morse homology theories. Let $a\geq0$ be a regular value of $\YM$. We fix an $a$-admissible perturbation $\V^-\in Y_a^-$ (cf.~Definition \ref{def:regperturbation}). Let $h\colon\crit^a(\YM)/\mathcal G_0(P)\to\R$ be a smooth Morse--Smale function (with respect to some fixed Riemannian metric on the finite-dimensional smooth manifold $\crit^a(\YM)/\mathcal G_0(P)$). We let
\begin{eqnarray*}
CM_{\ast}^a(\A(P),\V^-,h)
\end{eqnarray*}
denote the complex generated as a $\mathbbm Z_2$ vector space by the set $\crit(h)$ of critical points of $h$. To each $x\in\crit(h)$ we assign the index $\Ind(x)$ to be the sum of the Morse indices of $x$ as a critical point of $\YM$ and of the function $h$. For $x^-,x^+\in\crit(h)$ we call the set $\mathcal M(x^-,x^+)$ as in \cite[Section 8.2]{Swoboda1} the moduli space of Yang--Mills gradient flow lines with cascades from $x^-$ to $x^+$.

\begin{lem}\label{lem:cascadmodspace}
For generic, $a$-admissible perturbation $\V^-\in Y_a^-$, Morse function $h$, and all $x^-,x^+\in\crit(h)$, the set $\mathcal M(x^-,x^+)$ is a smooth manifold (with boundary) of dimension
\begin{eqnarray*}
\dim\mathcal M(x^-,x^+)=\Ind(x^-)-\Ind(x^+)-1.
\end{eqnarray*}
\end{lem}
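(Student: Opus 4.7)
The plan is to identify $\mathcal M(x^-,x^+)$ as the zero set of a suitable Fredholm section of a Banach bundle, and then apply the Sard--Smale theorem to obtain a smooth structure for generic parameters. Recall from \cite[Section 8.2]{Swoboda1} that an element of $\mathcal M(x^-,x^+)$ is a ``flow line with cascades'': a finite sequence consisting of (i) gradient flow lines of $h$ on critical manifolds of $\YM^{\V^-}$, alternating with (ii) \emph{cascades}, i.e.\ nonconstant solutions $(A,\Psi)$ of \eqref{EYF} on $\R$ whose asymptotic limits $A^\pm$ lie in critical manifolds $\hat{\mathcal C}^\pm \in \widehat{\CR}^a(\YM)$, subject to matching conditions between the endpoints of the $h$-flow segments and the asymptotic ends of the cascades modulo the $\mathcal G_0(P)$ and $\R$-translation actions.

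First I would stratify $\mathcal M(x^-,x^+)$ by the number $k\geq 0$ of cascades and the ordered tuple $(\hat{\mathcal C}_0,\dots,\hat{\mathcal C}_k)$ of intermediate critical manifolds through which the flow passes. On each stratum I would describe the moduli space as a fibered product: cascade moduli spaces $\mathcal M^{\mathrm{casc}}(\hat{\mathcal C}_{i-1},\hat{\mathcal C}_i)$ (quotients by $\mathcal G_0(\hat P)$ and $\R$-translation) fibered over the critical manifolds via evaluation at asymptotic limits, intersected transversally with unstable/stable manifolds $W^u(p_i;h)$, $W^s(p_i;h)$ of the auxiliary Morse function $h$ at the intermediate generators $p_i\in\crit(h)\cap\hat{\mathcal C}_i$, and at the two ends with $W^u(x^-;h)\subset\hat{\mathcal C}_0$ and $W^s(x^+;h)\subset\hat{\mathcal C}_k$.

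Next I would invoke transversality. For the cascades: the linearization of \eqref{EYF} at a solution gives a Fredholm operator on appropriate weighted parabolic Sobolev spaces (with exponential weights accounting for Morse--Bott degeneracy along the critical manifolds, as in \cite{Swoboda1}). The universal moduli space obtained by also varying $\V^- \in Y_a^-$ is cut out transversally, so by Sard--Smale the cascade moduli spaces are smooth manifolds for generic $a$-admissible $\V^-$; their dimension is computed by the spectral flow / relative Morse index formula and equals $\mathrm{ind}(\hat{\mathcal C}_{i-1})-\mathrm{ind}(\hat{\mathcal C}_i)+\dim\hat{\mathcal C}_{i-1}$ (before quotienting by $\R$, which subtracts one). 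Evaluation at asymptotic limits is a submersion onto $\hat{\mathcal C}_{i-1}\times\hat{\mathcal C}_i$, so after choosing the Morse--Smale $h$ the intersections with the unstable/stable manifolds are transverse.

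Finally I would add up the dimensions. On each stratum with $k$ cascades the total dimension is
\begin{equation*}
\sum_{i=1}^{k}\bigl(\mathrm{ind}(\hat{\mathcal C}_{i-1})-\mathrm{ind}(\hat{\mathcal C}_i)+\dim\hat{\mathcal C}_{i-1}-1\bigr)+\mathrm{ind}_h(x^-)-\mathrm{ind}_h(x^+)-\sum_{i=1}^{k-1}\dim\hat{\mathcal C}_i,
\end{equation*}
which telescopes (using that $\Ind(x^\pm)=\mathrm{ind}(\hat{\mathcal C}^\pm)+\mathrm{ind}_h(x^\pm)$) to $\Ind(x^-)-\Ind(x^+)-1$, independently of $k$, so the strata glue consistently. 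The main obstacle is the transversality step for cascades: because the critical points of $\YM^{\V^-}$ form positive-dimensional manifolds and because the perturbations $\V_\ell^-$ must respect gauge invariance and vanish near the critical sets (they are $a$-admissible), one has to verify that the admissible perturbations nevertheless span enough directions in the cokernel of the linearized flow operator at arbitrary cascades. This is done, as in \cite[Section 8]{Swoboda1}, by a unique continuation argument showing that elements of the cokernel vanish identically once they vanish on the support of all admissible $\V_\ell^-$, combined with the density of the sequences $(A_i)$, $(\eta_{ij})$ chosen in Section \ref{sect:perturbations}.
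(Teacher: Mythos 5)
Your overall strategy is the right one and is essentially what the cited proof does: the paper simply refers to \cite[Lemma 8.3]{Swoboda1}, which implements Frauenfelder's Morse--Bott-with-cascades construction \cite[Appendix C]{Frauenfelder1} exactly along the lines you describe (stratification by the number of cascades, fibered products over the critical manifolds, Sard--Smale on a universal moduli space, transversality for admissible perturbations via unique continuation). The transversality discussion in your last paragraph is sound and matches \cite[Section 7]{Swoboda1}.

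The gap is in the dimension count, which is the actual content of the lemma. As written, your displayed sum evaluates to
\begin{equation*}
\mathrm{ind}(\hat{\mathcal C}_0)-\mathrm{ind}(\hat{\mathcal C}_k)+\dim\hat{\mathcal C}_0-k+\mathrm{ind}_h(x^-)-\mathrm{ind}_h(x^+)
=\Ind(x^-)-\Ind(x^+)+\dim\hat{\mathcal C}_0-k,
\end{equation*}
which is \emph{not} $\Ind(x^-)-\Ind(x^+)-1$ unless $\dim\hat{\mathcal C}_0=k-1$; it does not telescope to a $k$-independent answer. Two ingredients of the fibered product are missing from the arithmetic although you mention them verbally. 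First, each of the $k-1$ intermediate matchings carries a nonnegative flow time $t_i$ (the condition is $\underline{u}_{i+1}=\phi^h_{t_i}(\overline{u}_i)$, not $\underline{u}_{i+1}=\overline{u}_i$), contributing $+(k-1)$. Second, the incidence conditions at the two ends are fibered products over $\hat{\mathcal C}_0$ and $\hat{\mathcal C}_k$: intersecting with $W^u(x^-;h)$ costs $\dim\hat{\mathcal C}_0-\mathrm{ind}_h(x^-)$ and intersecting with $W^s(x^+;h)$ costs $\mathrm{ind}_h(x^+)$, so the correct end contributions are $\mathrm{ind}_h(x^-)-\dim\hat{\mathcal C}_0$ and $-\mathrm{ind}_h(x^+)$, whereas you only wrote $\mathrm{ind}_h(x^-)-\mathrm{ind}_h(x^+)$. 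Restoring $+(k-1)-\dim\hat{\mathcal C}_0$ yields $\Ind(x^-)-\Ind(x^+)-1$ for every stratum, as required. Note also that the boundary of $\mathcal M(x^-,x^+)$ asserted in the lemma arises precisely from the locus $t_i=0$; since the $t_i$ are absent from your parameter count, your argument as written cannot account for the "with boundary" part of the statement either.
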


\begin{proof}
For a proof we refer to \cite[Lemma 8.3]{Swoboda1}.
\end{proof}

For $k\in\mathbbm N$ we define the Morse boundary operator
\begin{eqnarray*}
\partial_k^{\YM}\colon CM_k^a(\A(P),\V^-,h)\to CM_{k-1}^a(\A(P),\V^-,h)
\end{eqnarray*}
to be the linear extension of the map
\begin{eqnarray}\label{eq:boundaryoperator}
\partial_k^{\YM}x\coloneqq\sum_{x'\in\crit(h)\atop\Ind(x')=k-1}n(x,x')x',
\end{eqnarray}
where $x\in\crit(h)$ is a critical point of index $\Ind(x)=k$. The numbers $n(x,x')$ are given by counting modulo $2$ the flow lines with cascades (with respect to $\YM^{\V^-}$ and $h$) from $x$ to $x'$, i.e.
\begin{eqnarray*}
n(x,x')\coloneqq\#\,\mathcal M(x^-,x^+)\pmod 2.
\end{eqnarray*}

\begin{thm}[Yang--Mills Morse homology]\label{thm:mainresultSwoboda}
Let $a\geq0$ be a regular value of $\YM$. For any Morse function $h\colon\crit^a(\YM)/\mathcal G_0(P)\to\R$ and generic, $a$-admissible perturbation $\V^-\in Y_a^-$, the map $\partial_{\ast}^{\YM}$ satisfies $\partial_k^{\YM}\circ\partial_{k+1}^{\YM}=0$ for all $k\in\mathbbm N_0$ and thus there exist well-defined homology groups
\begin{eqnarray*}
HM_k^a(\A(P),\V^-,h)=\frac{\ker\partial_k^{\YM}}{\im\partial_{k+1}^{\YM}}.
\end{eqnarray*}
This Yang--Mills Morse homology is independent of the choice of $a$-admissible perturbation $\V^-$ and Morse function $h$.
\end{thm}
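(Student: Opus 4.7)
The plan is to follow the standard Morse homology scheme, with the adaptations required by the Morse--Bott degeneracy of $\crit(\YM)$ and by the gauge invariance of the flow. Most of the analytic work has been carried out in \cite{Swoboda1}, so my task is to indicate how the pieces fit together to yield $\partial^{\YM}\circ\partial^{\YM}=0$ and the invariance statement.

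First, I would establish the necessary transversality statement upgrading Lemma \ref{lem:cascadmodspace}: for a generic $a$-admissible perturbation $\V^-\in Y_a^-$ and a generic Morse--Smale function $h$, every moduli space $\mathcal M(x^-,x^+)$ of flow lines with cascades is a smooth manifold of dimension $\Ind(x^-)-\Ind(x^+)-1$. Here the Banach space $Y_a^-$ built in Section \ref{Bspaceperturbations} is designed precisely so that a standard Sard--Smale argument applied to the universal moduli space produces a residual set of regular perturbations; the admissibility condition guarantees that the critical manifolds themselves, and hence the count in $\crit(h)$, are preserved under the perturbation by Proposition \ref{prop:samecritpoints}. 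In particular, when $\Ind(x^-)-\Ind(x^+)=1$ the space $\mathcal M(x^-,x^+)$ consists of finitely many points, so the numbers $n(x^-,x^+)$ defining $\partial^{\YM}$ are well defined modulo $2$.

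Second, I would prove that $(\partial^{\YM})^2=0$ by describing the boundary of the one-dimensional moduli spaces. Given critical points $x,x''\in\crit(h)$ with $\Ind(x)-\Ind(x'')=2$, the moduli space $\mathcal M(x,x'')$ carries a natural compactification whose boundary is described, via the standard combination of a compactness theorem (uniform energy together with Uhlenbeck-type gauge fixing and parabolic regularity on $\mathbbm R\times\Sigma$, again from \cite{Swoboda1}, using that no bubbling can occur on sublevel sets of $\YM$ by the energy bound $\YM(A)\leq a$) and a gluing theorem for cascades, with broken configurations
\[
\bigsqcup_{x'\in\crit(h),\,\Ind(x')=\Ind(x)-1}\mathcal M(x,x')\times\mathcal M(x',x'').
\]
Counting boundary points of the compact one-manifold modulo $2$ gives the identity $\partial^{\YM}_{k}\circ\partial^{\YM}_{k+1}=0$.

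Third, to obtain independence from the auxiliary data $(\V^-,h)$, I would run a continuation argument: given two admissible pairs $(\V_0^-,h_0)$ and $(\V_1^-,h_1)$, pick a generic homotopy between them and define a chain map by counting solutions of the $s$-dependent gradient flow with cascades between index-equal critical points. The same compactness--gluing package shows that this is a chain map, and concatenating with a reverse homotopy together with a homotopy of homotopies produces a chain homotopy to the identity, so the induced map on $HM_{\ast}^a$ is an isomorphism.

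The main analytic obstacle is compactness of the one-dimensional moduli spaces with cascades: one must simultaneously control the Yang--Mills flow pieces (no energy loss via bubbles on $S^2$, and convergence up to based gauge after Uhlenbeck compactness) and the finite-length Morse flow pieces of $h$ on the critical manifolds, and then match the two at the handover points along the Morse--Bott critical set. Once this compactness is in place, the gluing construction is routine, and $(\partial^{\YM})^2=0$ and invariance follow as above.
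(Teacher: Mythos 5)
Your outline is correct and takes essentially the same route as the paper's source: the paper itself does not prove this theorem but simply cites \cite[Theorem 1.1]{Swoboda1}, where precisely this scheme --- Sard--Smale transversality over the universal moduli space of $a$-admissible perturbations, compactness and gluing identifying the boundary of the one-dimensional cascade moduli spaces with once-broken configurations, and continuation maps for independence of $(\V^-,h)$ --- is carried out. The only caveat is that your argument, like the paper's, ultimately defers the hard analytic content (Uhlenbeck compactness for the flow, exponential decay, and the cascade gluing) to that reference rather than supplying it.
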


\begin{proof}
For a proof we refer to \cite[Theorem 1.1]{Swoboda1}.
\end{proof}

Weber's heat flow homology for the loop space $\Omega M$ of a closed manifold $M$ is based on a similar construction of a chain complex and a boundary operator. One of his main results is the following theorem (which he only states for the case where the action functional $\EV$ is Morse, the adaption to the present case of a Morse--Bott situation being straight-forward).

\begin{thm}[Heat flow homology]\label{thm:mainresultWeber}
Let $b\geq0$ be a regular value of $\EV$. For any Morse function $h\colon\crit^b(\E)/G\to\R$ and generic, $b$-admissible perturbation $\V^+\in Y_b^+$, the map $\partial_{\ast}^{\E}$ (defined in analogy to \eqref{eq:boundaryoperator}) satisfies $\partial_k^{\E}\circ\partial_{k+1}^{\E}=0$ for all $k\in\mathbbm N_0$ and thus there exist well-defined homology groups
\begin{eqnarray*}
HM_k^b(\Omega M,\V^+,h)=\frac{\ker\partial_k^{\E}}{\im\partial_{k+1}^{\E}}.
\end{eqnarray*}
The homology $HM_{\ast}^b(\Omega M,\V^+,h)$ is called {\bf{\emph{heat flow homology}}}. It is independent of the choice of $b$-admissible perturbation $\V^+$ and Morse function $h$.
\end{thm}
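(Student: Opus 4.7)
The plan is to reduce the statement to Weber's heat flow homology theorem for loops in a compact Riemannian manifold (applied with $M=G$), combined with the cascade formalism of Frauenfelder that handles the Morse--Bott degeneracy caused by the free $G$-action on $\Lambda G$. The three things to verify are: (i) the relevant cascade moduli spaces $\mathcal M^\E(x^-,x^+)$ are smooth manifolds of the expected dimension $\Ind(x^-)-\Ind(x^+)-1$; (ii) they are compact up to breaking when this dimension equals zero or one; (iii) the resulting boundary operator is independent, up to chain homotopy, of the choice of $\V^+\in Y_b^+$ and Morse function $h$ on $\crit^b(\E)/G$.

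First, I would set up the Banach manifold of paths $x\colon\R\to\Lambda G$ satisfying \eqref{EEF} with prescribed asymptotic limits in fixed critical manifolds $\mathcal C^\pm\in\CR^b(\E)$, together with the evaluation at endpoints used to concatenate with negative gradient flow lines of $h$, exactly as in \cite[Section 8.1]{Swoboda1}. The linearisation of \eqref{EEF} is a linear parabolic operator of the form $\partial_s-\nabla_t\partial_t+R(x)\cdot+H_x\V^+$, which by Weber's analysis \cite{Web} is Fredholm between the appropriate weighted parabolic Sobolev spaces, with index equal to the relative Morse index of the asymptotic critical manifolds (up to the Morse--Bott correction $\dim\mathcal C^- - \dim\mathcal C^+$). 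Transversality is then achieved by an abstract Sard--Smale argument applied to the universal moduli space parametrised by $\V^+\in Y_b^+$; here the essential point is that the perturbations $\V_\ell^+$ were designed (see the construction in Section \ref{sect:perturbations}) so that their gradients span a dense subspace of admissible variations at any non-constant trajectory, while the $b$-admissibility condition in Definition \ref{def:regperturbation} guarantees that the critical set below level $b$ coincides with that of $\E$ (Proposition \ref{prop:samecritpointsE}). The cascade pieces in $\mathcal M^\E(x^-,x^+)$ are then cut out transversally, and the finite-dimensional flow lines of $h$ contribute the additional transversality by the Morse--Smale condition on $h$, yielding (i).

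Compactness modulo breaking is the main technical input and is the step I expect to be the most delicate, but it is not new: Weber's $L^2$ gradient flow of the classical action admits the a priori estimates
\[
\sup_{s\geq s_0}\bigl(\|\partial_tx(s)\|_{L^\infty}+\|\partial_sx(s)\|_{L^2}\bigr)\leq C
\]
along every finite-energy trajectory with $\E(x(s))\leq b$, together with the corresponding higher parabolic regularity, as proved in \cite{Web}. In the Morse--Bott setting this upgrades to convergence (up to reparametrisation and up to the $G$-action) of any sequence of cascades, with only the standard breaking phenomena, and the gradient flow of $h$ contributes only compact pieces on the finite-dimensional critical manifolds. Combining (i) and (ii), the zero-dimensional moduli spaces are finite and the one-dimensional ones compactify to manifolds with boundary whose boundary points are canonically identified with broken configurations of index drop two. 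A standard gluing argument in the cascade setting (again modelled on \cite{Frauenfelder1}, \cite[Section 8]{Swoboda1}) then gives $\partial_k^\E\circ\partial_{k+1}^\E=0$.

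Finally, for invariance under the choice of $(\V^+,h)$, I would construct continuation homomorphisms between the complexes associated to two generic choices $(\V_0^+,h_0)$ and $(\V_1^+,h_1)$ by counting solutions of a homotopy version of \eqref{EEF} where the perturbation interpolates from $\V_0^+$ to $\V_1^+$ on a compact $s$-interval, again coupled with cascade pieces of a homotopy of Morse functions on the critical manifolds. The same Fredholm, transversality, compactness and gluing package yields a chain map and a chain homotopy between the two continuation maps obtained from different interpolations, establishing the claimed independence. The only non-routine ingredient beyond Weber's machinery is the Morse--Bott bookkeeping, and that is handled verbatim by the cascade formalism already deployed for the Yang--Mills Morse homology in \cite{Swoboda1}.
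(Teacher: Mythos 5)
Your proposal is correct and matches the paper's approach: the paper proves this theorem purely by citation to Weber \cite[Theorem 1.14]{Web}, together with the remark that the adaptation from the Morse to the Morse--Bott case via Frauenfelder's cascade formalism is straightforward, and your sketch is a faithful outline of exactly that machinery (Fredholm theory and Sard--Smale transversality over the perturbation space $Y_b^+$, parabolic compactness up to breaking, gluing, and continuation maps for invariance). No gaps beyond what the cited references are expected to supply.
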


\begin{proof}
For a proof we refer to \cite[Theorem 1.14]{Web}.
\end{proof}

\subsection{Hybrid moduli spaces}\label{sect:Hybridmodspace}

The main idea in comparing Yang--Mills Morse homology to heat flow homology consists in the following construction of a so-called \emph{hybrid moduli space}. For abbreviation we set $A_0\coloneqq A(0)$, $x_0\coloneqq x(0)$, and $x_{A_0}\coloneqq\Phi(A(0))$. For critical manifolds $\hat{\mathcal C}^-\in\widehat{\CR}(\YM)$ and $\hat{\mathcal C}^+\in\widehat{\CR}(\E)$ let us define
\begin{align}\label{def:premodspace}
\lefteqn{\hat{\mathcal M}(\hat{\mathcal C}^-,\hat{\mathcal C}^+)\coloneqq}\\
\nonumber&\big\{(A,\Psi,x)\in C^{\infty}(\mathbbm R^-,\A(P)\times\Omega^0(\Sigma,\ad(P)))\times\,C^{\infty}(\mathbbm R^+,\Lambda G)\;\big|\\
\nonumber&(A,\Psi)\;\textrm{satisfies}\;\eqref{EYF},\quad x\;\textrm{satisfies}\;\eqref{EEF},\quad x_0=hx_{A_0}\;\textrm{for some}\;h\in G,\\
\nonumber&\lim_{s\to-\infty}(A(s),\Psi(s))=\big(A^-,0)\in\hat{\mathcal C}^-\times\Omega^0(\Sigma,\ad(P)),\lim_{s\to+\infty}x(s)=x^+\in\hat{\mathcal C}^+\big\}.
\end{align}

By construction, the group $\G(\hat P)\times G$ acts freely on the space $\hat{\mathcal M}(\hat{\mathcal C}^-,\hat{\mathcal C}^+)$. Let us denote $\mathcal C^-\coloneqq\hat{\mathcal C}^-/\G_0(P)$ and $\mathcal C^-\coloneqq\hat{\mathcal C}^+/G$. The moduli space we shall study further on is the quotient
\begin{eqnarray}\label{modulispace}
\mathcal M(\mathcal C^-,\mathcal C^+)\coloneqq\frac{\hat{\mathcal M}(\hat{\mathcal C}^-,\hat{\mathcal C}^+)}{\G(\hat P)\times G}.
\end{eqnarray}
 
We show subsequently that $\hat{\mathcal M}(\hat{\mathcal C}^-,\hat{\mathcal C}^+)$ arises as the zero set $\F^{-1}(0)$ of an equivariant (with respect to $\G(\hat P)\times G$) section $\F$ of a suitably defined Banach space bundle $\mathcal E$ over a Banach manifold $\mathcal B$. After proving that the horizontal differential $d_x\F$ at any such zero $x\in\F^{-1}(0)$ is a surjective Fredholm operator, it will follow from the implicit function theorem that the moduli space $\mathcal M(\mathcal C^-,\mathcal C^+)$ is a finite-dimensional smooth manifold.

\section{Fredholm theory and transversality}

\subsection{The nonlinear setup}\label{sect:nonlinsetup}
\setcounter{footnote}{0}

The Banach manifolds we shall use are modeled on weighted Sobolev spaces in order to make the Fredholm theory work. To define these, we choose numbers $\delta>0$ and $p>3$, and a smooth function $\beta$ such that $\beta(s)=-1$ if $s<0$ and $\beta(s)=1$ if $s>1$. We define the $\delta$-weighted $(k,p)$-Sobolev norm (for $1\leq p\leq\infty$ and an integer $k\geq0$) of a measurable function (respectively, a measurable section of a vector bundle) $u$ over $\R^-\times\Sigma$ or $\R^+\times S^1$ to be the usual $(k,p)$-Sobolev norm of the function (or section) $e^{\delta\beta(s)s}u$.\\
\noindent\\
Recall the definition of parabolic Sobolev spaces at the end of Section \ref{sec:prelim}. We fix numbers $\delta>0$, $p>3$. Let $\A_{\delta}^{1,2;p}(P)$ denote the space of time-dependent connections on $P$ which are locally of class $W^{1,2;p}$ and for which there exists a limiting connection $A^-\in\crit(\YM)$ and a number $T^-\leq0$ such that the time-dependent $1$-form $\alpha^-\coloneqq A-A^-$ satisfies
\begin{eqnarray}\label{eq:regularityatend1}
\alpha^-\in W_{\delta}^{1,p}((-\infty,T^-],L^p(\Sigma,\ad(P)))\cap L_{\delta}^p((-\infty,T^-],W^{2,p}(\Sigma,\ad(P))).
\end{eqnarray}

Similarly, let $\G_{\delta}^{2,p}(\hat P)$ denote the group of gauge transformations of $\hat P$ which are locally of class $W^{2,p}$ and in addition satisfy the following two conditions. First, the time-dependent $\ad(P)$-valued $1$-form $g^{-1}dg$ satisfies 
\begin{eqnarray*}
g^{-1}dg\in L_{\delta}^p(\R,W^{2,p}(\Sigma,T^{\ast}\Sigma\otimes\ad(P)))
\end{eqnarray*}
(this condition being necessary to make sure that $g^{\ast}A\in L_{\delta}^p(\R,\A^{2,p}(P))$ for every $A$ with this property). Second, there exists a limiting gauge transformation $g^-\in\G_0^{2,p}(P)$, a number $T^-\leq0$, and a bundle valued $0$-form
\begin{eqnarray*}
\gamma^-\in W_{\delta}^{2,p}((-\infty,T^-]\times\Sigma,\ad(\hat P_{(-\infty,T^-]}))
\end{eqnarray*}
with $g(s)=g^-\exp(\gamma^-(s))$ for $s\leq T^-$.\\
Let $\hat{\mathcal C}^-\in\widehat{\CR}(\YM)$ be a critical manifold. We denote by $\hat\B^-\coloneqq\hat\B^-(\hat{\mathcal C}^-,\delta,p)$ the Banach manifold of pairs
\begin{eqnarray*}
(A,\Psi)\in\A_{\delta}^{1,2;p}(P)\times W_{\delta}^{1,p}(\R^-\times\Sigma)
\end{eqnarray*}
such that $\lim_{s\to-\infty}A(s)=A^-$ holds in the sense of \eqref{eq:regularityatend1} for some $A^-\in\hat{\mathcal C}^-$. We identify such pairs as before with connections $\AA=A+\Psi\,ds$ on $\hat P$. The action of the group $\G_{\delta}^{2,p}(\hat P)$ on $\hat\B$ by gauge transformations as in \eqref{eq:extendedgaugetransf} is smooth. It is free by our requirement that $\lim_{s\to-\infty}g(s)=g^-$ be a based gauge transformation. The resulting quotient space
\begin{eqnarray*}
\B^-\coloneqq\B^-(\mathcal C^-,\delta,p)\coloneqq\frac{\hat\B^-(\hat{\mathcal C}^-,\delta,p)}{\G_{\delta}^{2,p}(\hat P)}
\end{eqnarray*}
is again a smooth Banach manifold. Let $\hat{\mathcal C}^+\in\widehat{\CR}(\E)$. We define $\hat{\mathcal B}^+(\hat{\mathcal C}^+,\delta,p)$ to be the Banach manifolds of maps $x\colon\R^+\times S^1\to G$ such that the condition
\begin{eqnarray*}
x-x^+\in L_{\delta}^p(\mathbbm R^+,W^{2,p}(S^1,G))\cap\,W_{\delta}^{1,p}(\mathbbm R^+,L^p(S^1,G))
\end{eqnarray*}
is satisfied for some $x^+\in\hat{\mathcal C}^+$. To make sense of the difference $x-x^+$ and of the Sobolev spaces involved in this definition, we consider the Lie group $G$ as being isometrically embedded in some euclidian space $\mathbbm R^N$. Now put 
\begin{eqnarray*}
\mathcal B^+\coloneqq\mathcal B^+(\mathcal C^+,\delta,p)\coloneqq\frac{\hat{\mathcal B}^+(\hat{\mathcal C}^+,\delta,p)}{G},
\end{eqnarray*}
and define $\mathcal B\coloneqq\mathcal B^-\times\mathcal B^+$. We define the Banach space bundle $\mathcal E=\mathcal E(\mathcal C^-,\mathcal C^+,\delta,p)$ over $\mathcal B$ in the following way. Let $\hat\E^-$ be the trivial Banach space bundle over $\hat\B^-$ with fibres $\hat{\mathcal E}_{(A,\Psi)}^-\coloneqq L_{\delta}^p(\mathbbm R^-,L^p(\Sigma,T^{\ast}\Sigma\otimes\ad(P)))$, and $\hat{\mathcal E}^+$ be the trivial Banach space bundle over $\hat{\mathcal B}^+$ with fibres $\hat{\mathcal E}_x^+\coloneqq L_{\delta}^p(\mathbbm R^+,L^p(S^1,\mathfrak g))$. We set
\begin{eqnarray*}
\hat\E\coloneqq\hat\E(\hat{\mathcal C}^-,\hat{\mathcal C}^+,\delta,p)\coloneqq\hat\E^-\times\hat{\mathcal E}^+\times\Lambda G.
\end{eqnarray*}

The action of $\G_{\delta}^{2,p}(\hat P)\times G$ on $\hat\B^-\times\hat\B^+$ lifts to a free action on $\hat\E^-\times\hat{\mathcal E}^+\times\Lambda G$ via
\begin{eqnarray*}
(g,h)\cdot(A,\Psi,\alpha,x,\xi,x_1)\coloneqq(g^{\ast}A,g^{-1}\Psi g+g^{-1}\partial_sg,g^{-1}\alpha g,hx,\xi,hx_1).
\end{eqnarray*}

Let $\E$ denote the respective quotient space and define the smooth section $\F\colon\mathcal B\to\mathcal E$ of $\mathcal E$ by
\begin{eqnarray}\label{sectionF}
\F\colon[(A,\Psi,x)]\mapsto\left[\left(\begin{array}{c}\partial_sA+d_A^{\ast}F_A-d_A\Psi+\nabla\V^-(A)\\
 x^{-1}\left(\partial_sx-\nabla_t\partial_tx+\nabla\V^+(x)\right)\\
 x(0)x_{A(0)}^{-1}
\end{array}\right)\right].
\end{eqnarray}

\subsection{Yang--Mills Hessian and linearized Yang--Mills gradient flow}

For $A\in\A(P)$, we let $\He_A$ denote the {\emph{augmented Yang--Mills Hessian}} defined by 
\begin{multline}\label{def:augmentedYMH}
\He_A\coloneqq\left(\begin{array}{cc}d_A^{\ast}d_A+\ast[\ast F_A\wedge\,\cdot\,]+H_A\V^-&-d_A\\-d_A^{\ast}&0\end{array}\right)\colon\\
\Omega^1(\Sigma,\ad(P))\oplus\Omega^0(\Sigma,\ad(P))\to\Omega^1(\Sigma,\ad(P))\oplus\Omega^0(\Sigma,\ad(P)).
\end{multline}

Here $H_A\V^-$ denotes the Hessian of the map $\V^-\colon\A(P)\to\R$. 
In order to find a domain for $\H_A$ which makes the subsequent Fredholm theory work, we fix an irreducible smooth reference connection $A_0\in\A(P)$ and decompose the space $\Omega^1(\Sigma,\ad(P))$ of smooth $\ad(P)$-valued $1$-forms as the $L^2(\Sigma)$ orthogonal sum
\begin{align*} 
\Omega^1(\Sigma,\ad(P))=&\ker\big(d_{A_0}^{\ast}\colon\Omega^1(\Sigma,\ad(P))\to\Omega^0(\Sigma,\ad(P))\big)\\
&\oplus\;\im\big(d_{A_0}\colon\Omega^0(\Sigma,\ad(P))\to\Omega^1(\Sigma,\ad(P))\big).
\end{align*}

Let $W_0^{2,p}$ and $W_1^{1,p}$ denote the completions of the first component, respectively of the second component with respect to the $(k,p)$-Sobolev norm ($k=1,2$). We set $\mathcal W^p(\Sigma)\coloneqq W_0^{2,p}\oplus W_1^{1,p}$ and endow this space with the sum norm. It was shown in \cite[Proposition 5.1]{Swoboda1} that it is independent of the choice of reference connection $A_0$. We let $p>1$ and consider the augmented Yang--Mills Hessian as an operator
\begin{eqnarray*}
\H_A\colon\mathcal W^p(\Sigma)\oplus W^{1,p}(\Sigma,\ad(P))\to L^p(\Sigma,T^{\ast}\Sigma\otimes\ad(P))\oplus L^p(\Sigma,\ad(P)).
\end{eqnarray*}
In the case $p=2$ this is a densely defined symmetric operator on the Hilbert space $L^2(\Sigma,T^{\ast}\Sigma\otimes\ad(P))\oplus L^2(\Sigma,T^{\ast}\Sigma)$ with domain
\begin{eqnarray}\label{def:domainHA}
\dom\H_A\coloneqq\mathcal W^2(\Sigma)\oplus W^{1,2}(\Sigma,\ad(P)).
\end{eqnarray}
It was shown in \cite[Proposition 5.2]{Swoboda1} that it is self-adjoint.

Next we consider the linearization of the Yang--Mills gradient flow \eqref{EYF}. Since every solution $(A,\Psi)$ of the Yang--Mills gradient flow is gauge equivalent under $\G(\hat P)$ to a solution satisfying $\Psi\equiv0$ (cf.~\cite[Proposition 3.3]{Swoboda}), it suffices to consider the linearization along such trajectories only. We define for $p>1$ the Banach spaces 
\begin{align*}
\Z^{\delta,p,-}\coloneqq&\big(W_{\delta}^{1,p}(\R,L^p(\Sigma,T^{\ast}\Sigma\otimes\ad(P)))\cap L_{\delta}^p(\R,\mathcal W^p(\Sigma))\big)
\oplus W_{\delta}^{1,p}(\R\times\Sigma,\ad(P)),\\
\mathcal L^{\delta,p,-}\coloneqq&L_{\delta}^p(\mathbbm R^-\times\Sigma,T^{\ast}\Sigma\otimes\ad(P))\oplus L_{\delta}^p(\mathbbm R^-\times\Sigma,\ad(P)),
\end{align*}
where the number $\delta>0$ refers to the weight function fixed at the beginning of Section \ref{sect:nonlinsetup}. In the following we shall be concerned with the linear operator 
\begin{eqnarray}\label{def:operatorDA}
\mathcal D_A\coloneqq\frac{d}{ds}+\mathcal H_A\colon\Z^{\delta,p,-}\to\mathcal L^{\delta,p,-}
\end{eqnarray}
for a smooth path $s\mapsto A(s)\in\A(P)$, where $s\in\R^-$. It arises as the linearization of the Yang--Mills gradient flow \eqref{EYF} at a solution $(A,\Psi)=(A,0)$. Some of its properties are collected in \ref{sect:aprioriestimates}.

\subsection{Linearized loop group gradient flow} 
\setcounter{footnote}{0}

We discuss the linearized loop group gradient flow, following Weber \cite{Web}. The Hessian of the energy functional $\E$ at the loop $x\in\Lambda G$ is the linear operator
\begin{eqnarray*} 
H_x\colon\xi\mapsto\nabla_t\nabla_t\xi+R(\xi,\partial_tx)\partial_tx+H_x\V^+\xi
\end{eqnarray*} 
for vector fields $\xi$ along $x$. Here $R$ denotes the Riemannian curvature tensor of $G$, and $H_x\V^+$ denotes the Hessian of the map $\V^+\colon\Lambda G\to\R$. Let $x\colon\R^+\times S^1\to G$ be a smooth map. We define for $p>1$ the Banach spaces 
\begin{align*}
\Z^{\delta,p,+}\coloneqq&W_{\delta}^{1,p}(\mathbbm R^+,L^p(S^1,x^{\ast}TG))\cap W_{\delta}^p(\mathbbm R^+,W^{2,p}(S^1,x^{\ast}TG)),\\
\mathcal L^{\delta,p,+}\coloneqq&L_{\delta}^p(\R^+\times S^1,x^{\ast}TG).
\end{align*}

Note that these depend on $x$, which is suppressed in our notation. For short we will often drop $x^{\ast}TG$ and simply write $L^p(S^1)$ etc. The number $\delta>0$ refers to the weight function fixed at the beginning of Section \ref{sect:nonlinsetup}. 
We denote
\begin{eqnarray*}
\mathcal D_x\coloneqq\frac{d}{ds}+\mathcal H_x\colon\Z^{\delta,p,+}\to\mathcal L^{\delta,p,+}
\end{eqnarray*}
for a smooth path $s\mapsto x(s)\in\Lambda G$, where $s\in\R^+$. Note that the operator $\mathcal D_x$ arises as the linearization of the loop group gradient flow \eqref{EEF}. We discuss some of its properties in \ref{sect:aprioriestimates}.

\subsection{Linearized moduli space problem}\label{subsect:linearizedoperator}

We fix numbers $p>3$, $\delta>0$ and regular values $a>0$ and $b\coloneqq\frac{4a}{\pi}$ of $\YM$, respectively of $\E$. Throughout we let $\V=(\V^-,\V^+)\in Y_a$ be an $a$-admissible perturbation with $\|\V\|$ sufficiently small such that Propositions \ref{prop:samecritpoints} and \ref{prop:samecritpointsE} apply. Let $\hat{\mathcal C}^-\in\mathcal{CR}^a(\YM)$ and $\hat{\mathcal C}^+\in\mathcal{CR}^b(\E)$ be critical manifolds. Recall the definition of $\hat{\mathcal M}(\hat{\mathcal C}^-,\hat{\mathcal C}^+)$ in \eqref{def:premodspace}. For $u=(A,\Psi,x)\in\hat{\mathcal M}(\hat{\mathcal C}^-,\hat{\mathcal C}^+)$ we define the Banach spaces
\begin{eqnarray*}
\Z^{\delta,p}\coloneqq\Z^{\delta,p,-}\oplus\Z^{\delta,p,+},\qquad\mathcal L^{\delta,p}\coloneqq\mathcal L^{\delta,p,-}\oplus\mathcal L^{\delta,p,+}\oplus L^2(S^1,\mathfrak g).  
\end{eqnarray*}

Put $A_0\coloneqq A(0)$ (and analogously for $\alpha_0$, $x_0$, and $\xi_0$). We use the notation
\begin{eqnarray*}
D\Phi_{A_0}\coloneqq\Phi(A_0)^{-1}\di\Phi(A_0)\colon\Omega^1(\Sigma,\ad(P))\to C^{\infty}(S^1,\mathfrak g),
\end{eqnarray*}
where $\Phi$ is the holonomy map as in \eqref{mapPhi}. We define the linear operator $\mathcal D_{u}\colon\Z^{\delta,p}\rightarrow\mathcal L^{\delta,p}$ by
\begin{eqnarray}\label{definitionD_u}
(\alpha,\psi,\xi)\mapsto(\mathcal D_A(\alpha,\psi),\mathcal D_x\xi,N_A(\alpha,\xi)),
\end{eqnarray}
where we denote $N_A(\alpha,\xi)\coloneqq x_0^{-1}\xi_0-D\Phi_{A_0}\alpha_0$. The following remark clarifies the relation between the operator $\mathcal D_{u}$ and the linearization of the section $\F$ in \eqref{sectionF}.

\begin{rem}\label{remBanachtangent}
\upshape
\begin{compactenum}[(i)]
\item
From the definition of $\F$ as a section $\F\colon\B\to\E$ (cf.~\eqref{sectionF}) it follows that its linearization $d\F(u)$, where $u=(A,\Psi,x)$, acts on the space of pairs $(\alpha,\psi,\xi)$ where $\alpha(s)$ converges exponentially to some $\alpha^-\in T_{A^-}\hat{\mathcal C}^-$ as $s\to-\infty$, and likewise $\xi(s)\to\xi^+\in T_{x^+}\hat{\mathcal C}^+$ as $s\to\infty$. This asymptotic behaviour is in slight contrast to that required for elements of $\Z^{\delta,p}$, the domain of the operator $\mathcal D_u$. However, it is easy to see that $d\F(u)$ is Fredholm if and only if this property holds for $\mathcal D_u$, and that the Fredholm indices are related via the formula
\begin{eqnarray*}
\ind d\F(u)=\ind\mathcal D_u+\dim\mathcal C^-+\dim\mathcal C^+.
\end{eqnarray*}
To see this, we view $d\F(u)$ as a compact perturbation of the operator $\mathcal D_u$, the latter being extended trivially to $\Z^{\delta,p}\oplus\R^{\dim\mathcal C^-}\oplus\R^{\dim\mathcal C^+}$.
\item
The operator $\mathcal D_u$ arises as the linearization of the \emph{unperturbed} Yang--Mills gradient flow equation $\eqref{EYF}$. The Fredholm theory for general perturbations $\V\in Y$ can be reduced to the unperturbed case because the terms involving $\V$ contribute only compact perturbations to the operator $\mathcal D_u$. 
\end{compactenum}
\end{rem}

Because the zero eigenspaces of the Hessians $\He_{A^-}$ and $H_{x^+}$ are in general (i.e.~if $\dim\mathcal C^{\pm}\geq1$) non-trivial, we cannot directly refer to standard theorems on the spectral flow to prove Theorem \ref{Fredholmtheorem} below. As an intermediate step, we instead use the Banach space isomorphisms 
\begin{align*}
\nu_1^-\colon&\Z^{\delta,p,-}\to\Z^{0,p,-}\eqqcolon\Z^{p,-},\qquad\nu_2^-\colon\mathcal L^{\delta,p,-}\to\mathcal L^{0,p,-}\eqqcolon\mathcal L^{p,-},\\
\nu_1^+\colon&\Z^{\delta,p,+}\to\Z^{0,p,+}\eqqcolon\Z^{p,+},\qquad\nu_2^+\colon\mathcal L^{\delta,p,+}\to\mathcal L^{0,p,+}\eqqcolon\mathcal L^{p,+}
\end{align*}
given by multiplication with the weight function $e^{\delta\beta(s)s}$ (with $\beta\colon\R\to\R$ the function introduced at the beginning of Section \ref{sect:nonlinsetup}). In the sequel we assume that the weight $\delta$ satisfies $0<\delta<\delta_0(\mathcal C^-,\mathcal C^+)$ where the positive constant $\delta_0(\mathcal C^-,\mathcal C^+)$ is defined to be the infimum of the set 
\begin{multline*}
\big\{|\lambda|\in\R\,\big|\,\lambda\neq0\;\textrm{is eigenvalue of}\;\mathcal H_A\;\textrm{for some}\;A\in\hat{\mathcal C}^-\;\textrm{or}\\
\lambda\neq0\;\textrm{is eigenvalue of}\;H_x\;\textrm{for some}\;x\in\hat{\mathcal C}^+\;\big\}.
\end{multline*}

We furthermore denote
\begin{eqnarray*}
\mathcal D_A^{\delta}\coloneqq\nu_2^-\circ\mathcal D_A\circ(\nu_1^-)^{-1}\qquad\textrm{and}\qquad\mathcal D_x^{\delta}\coloneqq\nu_2^+\circ\mathcal D_x\circ(\nu_1^+)^{-1}.
\end{eqnarray*}

We use the notation $\Z^p\coloneqq\Z^{p,-}\times\Z^{p,+}$ and $\mathcal L^p\coloneqq\mathcal L^{p,-}\times\mathcal L^{p,+}\times L^2(S^1,\mathfrak g)$, and set $\mathcal D_u^{\delta}\coloneqq(\mathcal D_A^{\delta},\mathcal D_x^{\delta},N_A)\colon\Z^p\to\mathcal L^p$. It is easy to check that the operator $\mathcal D_u$ is Fredholm if and only if this holds for $\mathcal D_u^{\delta}$, in which case both Fredholm indices coincide. Note also that the operator $\mathcal D_A^{\delta}$ takes the form
\begin{eqnarray}\label{eq:DAdelta}
\mathcal D_A^{\delta}=\frac{d}{ds}+\mathcal H_A-(\beta+\beta's)\delta,
\end{eqnarray}
and hence, if $\delta>0$ is chosen sufficiently small, the operator family $s\mapsto\mathcal H_A-(\beta+\beta's)\delta$ converges to the invertible operator $\mathcal H_{A^-}+\delta$ as $s\to-\infty$. Analogously, we have that  
\begin{eqnarray}\label{eq:Dgdelta}
\mathcal D_x^{\delta}=\frac{d}{ds}+H_x-(\beta+\beta's)\delta.
\end{eqnarray}

Here the operator family $s\mapsto H_x-(\beta+\beta's)\delta$ converges to the invertible operator $H_{x^+}-\delta$ as $s\to\infty$.

\subsection{Fredholm theorem}\label{sect:Fredholmthm}

For short, we use notation like $L^p(I)\coloneqq L^p(I,L^p(\Sigma,T^{\ast}\Sigma\otimes\ad(P))$ to denote the $L^p$ space of $\ad(P)$-valued $1$-forms over $I\times\Sigma$, where $I$ is some interval.

\begin{lem}\label{Fredholmtheorem1}
Let $u=(A,\Phi,x)\in\hat{\mathcal M}(\hat{\mathcal C}^-,\hat{\mathcal C}^+)$. There exist positive constants $c(u)$ and $T(u)$ such that the estimate 
\begin{multline}\label{eq:basicFredholminequ}
\|(\alpha,\psi,\xi)\|_{\Z^p}\leq c(u)\big(\|\mathcal D_u^{\delta}(\alpha,\psi,\xi)\|_{\mathcal L^p}+\|R(\alpha,\psi)\|_{\mathcal L^{p,-}}\\
+\|(\alpha,\psi)\|_{L^p([-T(u),0])}+\|\xi\|_{L^p([0,T(u)])}\big)
\end{multline}
is satisfied for all $(\alpha,\psi,\xi)\in\Z^p$. Here $R$ denotes the compact operator as in Lemma \ref{DAsemiFredholm}. As a consequence, the operator $\mathcal D_u^{\delta}$ has finite-dimensional kernel and closed range.
\end{lem}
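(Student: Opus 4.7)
The operator $\mathcal D_u^{\delta}=(\mathcal D_A^{\delta},\mathcal D_x^{\delta},N_A)$ decouples into a Yang--Mills half-line part on $\R^-$, a loop group half-line part on $\R^+$, and a zeroth-order coupling $N_A$ at $s=0$. I would prove \eqref{eq:basicFredholminequ} by establishing a semi-Fredholm estimate on each half-line separately and then summing them; the $N_A$ contribution requires no extra work because it is a component of the output $\mathcal D_u^{\delta}(\alpha,\psi,\xi)$ rather than a constraint one must solve.

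For the Yang--Mills side I would apply Lemma~\ref{DAsemiFredholm} to $(\alpha,\psi)\in\Z^{p,-}$. The choice $\delta<\delta_0(\mathcal C^-,\mathcal C^+)$ ensures that the asymptotic operator $\mathcal H_{A^-}+\delta$ appearing in \eqref{eq:DAdelta} is invertible, so the lemma supplies
\begin{equation*}
\|(\alpha,\psi)\|_{\Z^{p,-}}\leq c_1\bigl(\|\mathcal D_A^{\delta}(\alpha,\psi)\|_{\mathcal L^{p,-}}+\|R(\alpha,\psi)\|_{\mathcal L^{p,-}}+\|(\alpha,\psi)\|_{L^p([-T_1,0])}\bigr).
\end{equation*}
The compact operator $R$ encodes the zero eigenspace of the augmented Hessian along the trajectory, while the $L^p$ term on the compact time interval accounts for the missing asymptotic control at the boundary $s=0$; the parabolic trace of $(\alpha,\psi)$ at $s=0$ is controlled by this norm via the standard trace theorem for the parabolic Sobolev space $W^{1,2;p}$.

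For the loop group side I would establish the analogous bound for $\xi\in\Z^{p,+}$. By \eqref{eq:Dgdelta} the asymptotic operator is $H_{x^+}-\delta$, again invertible by the choice of $\delta$. Since $\mathcal D_x^{\delta}$ is a scalar parabolic operator on the compact manifold $S^1$ with no gauge-fixing constraint, the spectral-flow argument underlying Lemma~\ref{DAsemiFredholm} produces the cleaner inequality
\begin{equation*}
\|\xi\|_{\Z^{p,+}}\leq c_2\bigl(\|\mathcal D_x^{\delta}\xi\|_{\mathcal L^{p,+}}+\|\xi\|_{L^p([0,T_2])}\bigr).
\end{equation*}
Summing the two inequalities, setting $T(u):=\max\{T_1,T_2\}$, and observing that the three components of $\mathcal D_u^{\delta}$ together dominate $\|\mathcal D_A^{\delta}(\alpha,\psi)\|_{\mathcal L^{p,-}}+\|\mathcal D_x^{\delta}\xi\|_{\mathcal L^{p,+}}+\|N_A(\alpha,\xi)\|_{L^2(S^1)}$ in the $\mathcal L^p$-norm yields \eqref{eq:basicFredholminequ}.

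The final assertion---finite-dimensional kernel and closed range---then follows from Abbott's lemma: the map $(\alpha,\psi,\xi)\mapsto\bigl(R(\alpha,\psi),(\alpha,\psi)|_{[-T(u),0]},\xi|_{[0,T(u)]}\bigr)$ is compact, since the parabolic Sobolev spaces $\Z^{p,\pm}$ embed compactly into $L^p$ on compact time intervals by Rellich--Kondrachov, and $R$ is compact by hypothesis. The main technical delicacy is the half-line nature of the argument: Lemma~\ref{DAsemiFredholm} is presumably stated for trajectories with two asymptotic ends on $\R$, so applying it on $\R^-$ requires first cutting off $(\alpha,\psi)$ by a function vanishing near $s=0$ and absorbing the commutator $[\mathcal D_A^{\delta},\text{cutoff}]$---which is supported on the compact interval and of zeroth order in the spatial variables---into the $L^p([-T_1,0])$ error term. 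Aside from this mechanical step, all ingredients are already in place.
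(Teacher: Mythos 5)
There is a genuine gap, and it sits exactly where you declare the problem easiest: in the claim that the coupling term $N_A$ ``requires no extra work'' and that the loop group side satisfies the ``cleaner inequality''
$\|\xi\|_{\Z^{p,+}}\leq c_2\bigl(\|\mathcal D_x^{\delta}\xi\|_{\mathcal L^{p,+}}+\|\xi\|_{L^p([0,T_2])}\bigr)$.
That inequality is false. The kernel of $\mathcal D_x^{\delta}$ on the \emph{forward} half-line $\R^+$ with no condition at $s=0$ is infinite-dimensional: in the model case $x\equiv x^+$ every $\xi(s)=e^{-s(H_{x^+}-\delta)}\xi_0$ with $\xi_0$ in the positive spectral subspace of $H_{x^+}-\delta$ (which is infinite-dimensional, since $H_{x^+}$ is elliptic and bounded below with only finitely many negative eigenvalues) decays and lies in $\Z^{p,+}$. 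If your estimate held, then on the kernel it would read $\|\xi\|_{\Z^{p,+}}\leq c_2\|\xi\|_{L^p([0,T_2])}$, and compactness of the restriction $\Z^{p,+}\hookrightarrow L^p([0,T_2])$ would force the kernel to be finite-dimensional --- a contradiction. This is why the paper's Lemma \ref{lem:linestimateDg} necessarily carries the extra trace term $\|\xi(0)\|_{L^2(S^1)}$, which is bounded but \emph{not} compact as a map $\Z^{p,+}\to L^2(S^1)$. Note the asymmetry with the Yang--Mills side: on the backward half-line $\R^-$ with decay at $-\infty$, kernel elements have $\zeta(0)$ in the finite-dimensional \emph{negative} spectral subspace of $\mathcal H_{A^-}+\delta$, so no trace term is needed there. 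Treating the two half-lines symmetrically is the error.

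The missing idea is that the operator $N_A$ is precisely what repairs this: from $N_A(\alpha,\xi)=x_0^{-1}\xi_0-D\Phi_{A_0}\alpha_0$ and boundedness of $\di\Phi(A_0)\colon W^{1,2}(\Sigma)\to W^{1,2}(S^1)$ one gets $\|\xi(0)\|_{L^2(S^1)}\leq c\bigl(\|N_A(\alpha,\xi)\|_{L^2(S^1)}+\|\alpha(0)\|_{W^{1,2}(\Sigma)}\bigr)$, and the trace $\|\alpha(0)\|_{W^{1,2}(\Sigma)}$ is then controlled by $\|\mathcal D_A^{\delta}\zeta\|_{\mathcal L^{p,-}}+\|\zeta\|_{\Z^{p,-}}$ via the interpolation Lemma \ref{interpolationlemma}, after which the $\|\zeta\|_{\Z^{p,-}}$ contribution is absorbed using the Yang--Mills half-line estimate you already have. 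So the coupling condition is not a passive component of the output; it is the mechanism that transfers the finite-dimensionality of the boundary data from the Yang--Mills side to the loop group side and makes $\mathcal D_u^{\delta}$ semi-Fredholm. Your Yang--Mills half of the argument (cutoff, Lemma \ref{DAsemiFredholm} near $-\infty$, interior parabolic estimate \eqref{apriori2A} on the compact piece) does match the paper's, and the closing Rellich/closed-range step is fine once the correct loop-group estimate is in place.
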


\begin{proof}
Inequality \eqref{eq:basicFredholminequ} is invariant under gauge transformations in $\G(\hat P)$, and thus it suffices to prove it for $\Phi=0$. Set $\zeta\coloneqq(\alpha,\psi)$. For $T=T(A)>0$ large enough we choose a smooth cut-off function $\chi\colon(-\infty,0]\to\mathbbm R$ with support in $[-T,0]$ and such that $\chi(s)=1$ for $s\in[-T+1,0]$. Then with $R$ as in Lemma \ref{DAsemiFredholm} it follows for some constant $c(A,p)$ that 
\begin{multline}\label{estimatecutoff}
\|\zeta\|_{\Z^{p,-}}\leq\|\chi\zeta\|_{\Z^{p,-}}+\|(1-\chi)\zeta\|_{\Z^{p,-}}\\
\leq c(A,p)\big(\|\mathcal D_A^{\delta}\zeta\|_{L^p([-T,0])}+\|\zeta\|_{L^p([-T,0])}+\|\mathcal D_A^{\delta}\zeta\|_{\mathcal L^{p,-}}+\|R\zeta\|_{\mathcal L^{p,-}}\big).
\end{multline}
Here we used \eqref{apriori2A} to bound the term $\|\chi\zeta\|_{\Z^{p,-}}$. The estimate for the term $\|(1-\chi)\zeta\|_{\Z^{p,-}}$ follows from Lemma \ref{DAsemiFredholm}, which remains valid for any $\mathcal D_A^{\delta}$ sufficiently close (in operator norm) to $\mathcal D_{A^-}^{\delta}$. This property holds by Lemma \ref{continuityproperiesDA}. We now apply Lemma \ref{lem:linestimateDg} to bound the term $\xi$. Let $T=T(x)>0$ be as in the lemma. This yields for some constant $c=c(x,p,\delta)>0$ that
\begin{eqnarray}\label{eq:thmfinkernelZplus}
\|\xi\|_{\Z^{p,+}}\leq c\big(\|\mathcal D_x^{\delta}\xi\|_{\mathcal L^{p,+}}+\|\xi\|_{L^p([0,T],L^2(S^1))}+\|\xi(0)\|_{L^2(S^1)}\big).
\end{eqnarray}

From the definition of $N_A$ and boundedness of the operator $\di\Phi(A_0)\colon W^{1,2}(\Sigma)\to W^{1,2}(S^1)$ cf.~\cite[Lemma A.1]{Swoboda}, we can further estimate
\begin{eqnarray}\label{eq:thmfinkernelZplus1}
\|\xi(0)\|_{L^2(S^1)}\leq c\big(\|N_A(\alpha,\xi)\|_{L^2(S^1)}+\|\alpha(0)\|_{W^{1,2}(\Sigma)}\big).
\end{eqnarray}

To control the term $\|\alpha(0)\|_{W^{1,2}(\Sigma)}$ we use Lemma \ref{interpolationlemma} with $H=W^{1,2}(\Sigma)$, $V=W^{2,2}(\Sigma)$, and $V^{\ast}=L^2(\Sigma)$. Choosing $\eps>0$ in that lemma sufficiently large this implies for a constant $c(A,p)>0$ the estimate
\begin{eqnarray}\label{eq:thmfinkernelinterp}
\nonumber\|\alpha(0)\|_{W^{1,2}(\Sigma)}^p&\leq&\|\zeta(0)\|_{W^{1,2}(\Sigma)}^p\\
\nonumber&\leq&c(A,p)\Big(\int_{-\infty}^0\|\mathcal D_A^{\delta}\zeta\|_{L^2(\Sigma)}^p\,ds+\int_{-\infty}^0\|\zeta\|_{W^{1,2}(\Sigma)}^p\,ds\Big)\\
&\leq&c(A,p)\big(\|\mathcal D_A^{\delta}\zeta\|_{\mathcal L^{p,-}}^p+\|\zeta\|_{\mathcal Z^{p,-}}^p\big).
\end{eqnarray}
The last line follows, as by definition the norm of $\mathcal L^{p,-}$ is stronger than that of $L^p(\mathbbm R^-,L^2 (\Sigma))$ , and the norm of $\Z^{p,-}$ is stronger than that of $L^p(\mathbbm R^-,W^{1,2}(\Sigma))$. Combining estimates \eqref{eq:thmfinkernelZplus}, \eqref{eq:thmfinkernelZplus1}, \eqref{eq:thmfinkernelinterp} with \eqref{estimatecutoff} yields the claimed inequality \eqref{eq:basicFredholminequ} (with $T(u)\coloneqq\max\{T(A),T(x)\}$). To finish the proof, we note that the operator $\mathcal D_u^{\delta}=(\mathcal D_A^{\delta},\mathcal D_x^{\delta},N_A)$ is bounded and that the operator $R$ and the inclusion maps $\Z^{p,-}([-T(u),0])\hookrightarrow L^p([-T(u),0])$ and $\mathcal Z^{p,+}([0,T(u)])\hookrightarrow L^p([0,T(u)])$ are compact (by Rellich's theorem). Hence the assertions on the kernel and the range follow from the abstract closed range lemma (cf.~\cite[p.~14]{Sal2}). The proof is complete.
\end{proof} 
 
We now state and prove the main result concerning the linear operator $\mathcal D_u$.

\begin{thm}[Fredholm theorem]\label{Fredholmtheorem}
The operator $\mathcal D_u$ is a Fredholm operator of index 
\begin{eqnarray*} 
\ind\mathcal D_u=\ind A^--\ind x^+-\dim\mathcal C^+.
\end{eqnarray*}  
(Here $\ind A^-$, respectively $\ind x^+$, denotes the number of negative eigenvalues of $\mathcal H_{A^-}$ and $H_{x^+}$, counted with multiplicities).
\end{thm}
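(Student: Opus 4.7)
The plan is to pass first to the conjugated operator $\mathcal{D}_u^\delta$, since the weight isomorphisms $\nu_1^\pm,\nu_2^\pm$ preserve both Fredholmness and the index. The advantage is that by \eqref{eq:DAdelta}--\eqref{eq:Dgdelta} and the choice $0<\delta<\delta_0(\mathcal{C}^-,\mathcal{C}^+)$, the asymptotic operators $\mathcal{H}_{A^-}+\delta$ at $-\infty$ and $H_{x^+}-\delta$ at $+\infty$ are invertible, so standard spectral-flow techniques for parabolic operators with hyperbolic asymptotes apply.

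To upgrade the semi-Fredholm conclusion of Lemma~\ref{Fredholmtheorem1} to full Fredholmness, it remains to bound $\dim\coker\mathcal{D}_u^\delta$. I would do this by establishing the analogue of estimate~\eqref{eq:basicFredholminequ} for the formal adjoint $(\mathcal{D}_u^\delta)^\ast$. That adjoint is of exactly the same structural type---a time-reversed parabolic operator on each half-line together with the trace coupling dual to $N_A$---so the cutoff plus asymptotic-inversion argument of Lemma~\ref{Fredholmtheorem1} applies verbatim, and the abstract closed range lemma forces $\ker(\mathcal{D}_u^\delta)^\ast$, and hence $\coker\mathcal{D}_u^\delta$, to be finite-dimensional.

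With Fredholmness in hand I would compute the index by homotopy invariance, deforming $\mathcal{D}_u^\delta$ through a continuous family of Fredholm operators to the autonomous, decoupled model $\mathcal{D}_0^\delta$ obtained from the constant paths $A\equiv A^-$, $x\equiv x^+$ and the frozen linearisation $D\Phi_{A^-}$. Along this deformation the asymptotic operators are unchanged, so the estimate of Lemma~\ref{Fredholmtheorem1} is uniform in the homotopy parameter and the index is constant. For the model operator, solutions of $(d/ds+\mathcal{H}_{A^-}+\delta)(\alpha,\psi)=0$ in $\mathcal{Z}^{p,-}$ are in bijection via evaluation at $s=0$ with the unstable subspace $E^u\subseteq T_{A^-}L^2$ of $\mathcal{H}_{A^-}+\delta$; because $\delta$ is smaller than any nonzero eigenvalue, $\dim E^u=\ind A^-$ (the zero modes of $\mathcal{H}_{A^-}$ get shifted to $+\delta$). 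Dually, on $\mathbb{R}^+$ the inhomogeneous equation $(d/ds+H_{x^+}-\delta)\xi=f$ admits a solution in $\mathcal{Z}^{p,+}$ iff the initial data, corrected by a particular solution, projects trivially onto the unstable subspace $E^u_+$ of $H_{x^+}-\delta$, and $\dim E^u_+=\ind x^++\dim\mathcal{C}^+$ (the negative eigenvalues of $H_{x^+}$ plus the zero modes shifted to $-\delta$).

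The coupling $N_{A^-}$ identifies $\xi(0)$ with $x_0\,D\Phi_{A^-}\alpha(0)$, reducing the whole problem to the finite-dimensional linear map
\[
L\colon E^u\longrightarrow E^u_+,\qquad\eta\longmapsto P^u_+\bigl(x_0\,D\Phi_{A^-}\eta\bigr),
\]
where $P^u_+$ denotes the projection onto $E^u_+$. A direct count gives
\[
\dim\ker\mathcal{D}_0^\delta=\ind A^--\operatorname{rank} L,\qquad\dim\coker\mathcal{D}_0^\delta=\ind x^++\dim\mathcal{C}^+-\operatorname{rank} L,
\]
so subtracting yields the stated index formula. The main obstacle I expect is the homotopy step: ensuring that the semi-Fredholm estimate of Lemma~\ref{Fredholmtheorem1} remains uniform along the deformation, and, most delicately, that the coupling map $A\mapsto D\Phi_{A(0)}$ depends continuously on the deformed connection as a family of bounded operators---a regularity property of the holonomy map that would need to be read off from Appendix~\ref{holmapappendix}.
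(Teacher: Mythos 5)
Your proposal is essentially correct and ends with the same finite-dimensional linear algebra as the paper, but it is organized differently in two places. First, for finiteness of the cokernel you propose an a priori estimate for the formal adjoint $(\mathcal D_u^\delta)^\ast$ and claim the argument of Lemma \ref{Fredholmtheorem1} applies ``verbatim''; that is an overstatement, since the adjoint of the trace coupling $N_A$ produces distributional boundary terms at $s=0$ and is not of the same parabolic type, so the cutoff argument would need genuine modification. The paper avoids this entirely: it invokes Lemmas \ref{DAsurjective} and \ref{Dgsurjective}, which assert that the half-line operators $\mathcal D_A^{\delta}$ and $\mathcal D_x^{\delta}$ are themselves surjective for the actual (non-autonomous) paths, so the cokernel of the coupled operator is a quotient of $L^2(S^1,\mathfrak g)$ by a subspace of finite codimension and is computed directly. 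Since your own index count ultimately uses exactly these surjectivity facts, the adjoint detour is unnecessary and you could simply delete it. Second, you compute the index by homotoping to the autonomous decoupled model with frozen coupling $D\Phi_{A^-}$; the paper instead performs the kernel/cokernel count for the original operator, writing $\dim\ker\mathcal D_u^\delta$ and $\codim W$ in terms of the trace spaces $K$ and $S$ and the map $\di\Phi|_K$ composed with the projection onto $T$. The two counts are identical ($\ind A^--\operatorname{rank}L$ versus $\ind x^++\dim\mathcal C^+-\operatorname{rank}L$), and the deformation-to-constant step you flag as delicate is in fact already carried out inside the proofs of Lemmas \ref{DAsurjective} and \ref{Dgsurjective} (the ``nearby'' and ``general'' cases), so the paper's route buys a cleaner statement of where the continuity of $A\mapsto D\Phi_{A(0)}$ is actually needed, while yours makes the reduction to the model operator more explicit.
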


\begin{proof}
As pointed out after Remark \ref{remBanachtangent} it sufficies to prove the assertion for the operator $\mathcal D_u^{\delta}$, for any sufficiently small weight $\delta>0$. That the operator $\mathcal D_u^{\delta}$ has finite-dimensional kernel and closed range is part of Lemma \ref{Fredholmtheorem1}. It remains to establish the formula for the index.
Let $H\coloneqq L^2(S^1,x^{\ast}TG)$ and denote  
\begin{eqnarray*} 
S\coloneqq\{\xi_0\in H\mid\exists\xi\in\Z^{p,+}\;\textrm{s.t.}\;\mathcal D_x^{\delta}\xi=0\;\textrm{and}\;\xi(0)=\xi_0\}.
\end{eqnarray*}
Note that $S$ is a closed subspace of $H$. Let $T$ be the orthogonal complement of $S$ in $H$. We set $K\coloneqq\{(\alpha(0),\psi(0))\mid(\alpha,\psi)\in\ker\mathcal D_{(A,\Psi)}^{\delta}\}$. It follows that the kernel of the operator $\mathcal D_u^{\delta}$ has dimension
\begin{eqnarray}\label{eq:dimker}
\dim\ker\mathcal D_u^{\delta}=\dim\ker\di\Phi|_K+\dim(N(K,0)\cap S).
\end{eqnarray}
On the other hand, because the operators $\mathcal D_A^{\delta}$ and $\mathcal D_x^{\delta}$ are surjective, the dimension of $\coker\mathcal D_u^{\delta}$ equals the codimension of the affine space
\begin{eqnarray*}
W=\big\{N(\alpha(0),\xi(0))\,\big|\,\exists(\alpha,\psi,\xi)\in\Z^p\;\textrm{s.t.}\;
(\mathcal D_A^{\delta}(\alpha,\psi),\mathcal D_x^{\delta}\xi)=(\beta,\omega,\eta)\big\}
\end{eqnarray*}
for arbitrary but fixed $(\beta,\omega,\eta)\in\mathcal L^p$. Let $(\alpha,\psi)$ vary over the space $\ker\mathcal D_A^{\delta}$ to see that this codimension is given by
\begin{eqnarray}\label{eq:dimcoker}
\codim W=\dim T-\dim K+\dim\ker \di\Phi|_K+\dim(N(K,0)\cap S).
\end{eqnarray}
Combining \eqref{eq:dimker} and \eqref{eq:dimcoker} and using that $\dim K=\ind\mathcal H_{A^-}$ (by Lemma \ref{DAsurjective}) and $\dim T=\ind x^++\dim\mathcal C^+$ (by Lemma \ref{Dgsurjective}), the asserted index formula follows.
\end{proof}

\begin{rem}\label{eq:finalindex}
\upshape
In view of Remark \ref{remBanachtangent} and Theorem \ref{Fredholmtheorem} we obtain the formula
\begin{eqnarray}\label{eq:FredholmindexdF}
\ind d\F(u)=\ind A^--\ind x^++\dim\mathcal C^-
\end{eqnarray}  
for the Fredholm index of the linearization of $\F$.
\end{rem}

\subsection{Transversality}

Throughout we fix a regular value $a\geq0$ of $\YM$. Our aim here is to show that for every pair $(\mathcal C^-,\mathcal C^+)\in\CR^a(\YM)\times\CR^b(\E)$ (where $b=4a/\pi$) and a residual subset of $a$-admmissible perturbations $\V=(\V^-,\V^+)\in Y_a$ (cf.~Definition \ref{def:regperturbation}) the linearized section $d\F(u)$ is surjective, for all $u\in\mathcal M(\mathcal C^-,\mathcal C^+)$. Recall the definition of the Banach manifold $\mathcal B$, the Banach space bundle $\mathcal E$, and the section $\F$ (cf.~Section \ref{sect:nonlinsetup}). We now change our notation slightly and let $\mathcal M(\mathcal C^-,\mathcal C^+;\V)$ indicate the moduli space as in \eqref{modulispace}, i.e.~defined for a fixed perturbation $\V\in Y_a$. Throughout the rest of this section we also replace the notation $\F$ by $\F_{\V}$. Let then $\hat{\F}\colon\mathcal B\times Y_a\to\mathcal E$ denote the section of the Banach space bundle $\mathcal E$ defined by 
\begin{eqnarray}\label{eq:defnmaphatf}
\hat{\F}\colon[(A,\Psi,x,\V)]\mapsto\F_{\V}([(A,\Psi,x)]).
\end{eqnarray}

We call the zero set $\mathcal M^{\univ}(\mathcal C^-,\mathcal C^+)\coloneqq\{w\in\mathcal B\times Y_a\mid\hat{\F}(w)=0\}$ the \emph{universal moduli space}. Thus the perturbation $\V\in Y_a$ which had previously been kept fixed is now allowed to vary over the Banach space $Y_a$. In this section, our main result is the following.

\begin{thm}\label{thm:transversality}
There exists a residual subset $Y_a^{\reg}\subseteq Y_a$ of perturbations such that for every $\V\in Y_a^{\reg}$ and every $(\mathcal C^-,\mathcal C^+)\in\CR^a(\YM)\times\CR^b(\E)$ the moduli space $\mathcal M(\mathcal C^-,\mathcal C^+;\V)$ is a Banach submanifold of $\mathcal M^{\univ}(\mathcal C^-,\mathcal C^+)$.  
\end{thm}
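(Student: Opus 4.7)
The strategy is the standard Sard--Smale argument applied to the universal moduli space. We proceed in four steps.

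\textbf{Step 1: The universal moduli space is a Banach manifold.} We show that the section $\hat{\F}\colon\mathcal{B}\times Y_a\to\mathcal{E}$ defined in \eqref{eq:defnmaphatf} is transverse to the zero section, i.e.\ the horizontal differential $d\hat{\F}(u,\V)$ is surjective at every zero $(u,\V)$ with $u=(A,\Psi,x)$. The linearization splits as
\begin{eqnarray*}
d\hat{\F}(u,\V)(\zeta,\V')=d\F_{\V}(u)\zeta+\bigl(\nabla\V'^-(A),\,x^{-1}\nabla\V'^+(x),\,0\bigr),
\end{eqnarray*}
where $\zeta=(\alpha,\psi,\xi)\in\Z^{\delta,p}$ and $\V'=(\V'^-,\V'^+)\in Y_a$. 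By Theorem \ref{Fredholmtheorem} (and Remark \ref{eq:finalindex}), $d\F_{\V}(u)$ is Fredholm, hence has closed range and finite-dimensional cokernel. To obtain surjectivity of $d\hat{\F}$ it suffices to show that the variation in the perturbation direction fills the cokernel.

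\textbf{Step 2: Filling the cokernel.} Assume, for contradiction, that $d\hat{\F}(u,\V)$ is not surjective. Then there exists a nonzero element $\eta=(\eta^-,\eta^+,\eta^0)$ in the annihilator of the image of $d\hat{\F}(u,\V)$ (with respect to the $L^{p'}$--$L^p$ pairing, $1/p+1/p'=1$). Since $\eta$ annihilates $\im d\F_{\V}(u)$, elliptic regularity for the formally adjoint operator $(\mathcal{D}_u^{\delta})^*$ implies that $\eta$ is smooth on $\mathbb{R}^-\times\Sigma$ and $\mathbb{R}^+\times S^1$ respectively. The additional condition
\begin{eqnarray*}
\int_{\mathbbm R^-}\langle\eta^-(s),\nabla\V'^-(A(s))\rangle\,ds+\int_{\mathbbm R^+}\langle\eta^+(s),x(s)^{-1}\nabla\V'^+(x(s))\rangle\,ds=0
\end{eqnarray*}
must hold for every $\V'\in Y_a$. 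Using the dense sequences $(A_i)$ in $\A(P)$ and $(\eta_{ij})$ in the $L^2$-orthogonal slice at $A_i$, together with the cutoff-localization built into the perturbations $\V_{\ell}^-$, a computation as in \cite[Proposition 10.1]{Swoboda1} shows that this forces $\eta^-(s)=0$ on the open set $\{s\in\mathbb{R}^-\mid A(s)\notin\bigcup_{\hat{\mathcal C}}U_{\hat{\mathcal C}}\}$. Because $u$ is a gradient flow line asymptotic to $\hat{\mathcal C}^-$ and not identically contained in any $U_{\hat{\mathcal C}}$ (as $a$ is a regular value and $(A,\Psi,x)$ is non-constant in $s$), this set is non-empty and open. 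Unique continuation for the linear parabolic equation $(\mathcal{D}_A^{\delta})^*\eta^-=0$ (which is of the form $-\partial_s+\mathcal{H}_A+$~lower order, with smooth coefficients) then propagates $\eta^-\equiv0$ on all of $\mathbb{R}^-\times\Sigma$. An entirely analogous argument, using the perturbations $\V_{\ell}^+$ of \cite[Section 7]{Web}, shows $\eta^+\equiv0$. The remaining component $\eta^0\in L^2(S^1,\mathfrak g)$ must then annihilate the image of $N_A$, but surjectivity of $N_A$ on sections of the appropriate form (which follows from surjectivity of $d\Phi_{A_0}$ onto tangent directions at the holonomy loop, cf.\ Appendix \ref{holmapappendix}) forces $\eta^0=0$, contradicting $\eta\neq0$.

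\textbf{Step 3: Sard--Smale.} Step 1--2 imply that $\mathcal{M}^{\univ}(\mathcal{C}^-,\mathcal{C}^+)$ is a smooth Banach manifold. The projection
\begin{eqnarray*}
\pi\colon\mathcal M^{\univ}(\mathcal C^-,\mathcal C^+)\to Y_a,\qquad(u,\V)\mapsto\V,
\end{eqnarray*}
is smooth; at any $(u,\V)$ its differential has kernel isomorphic to $\ker d\F_{\V}(u)$ and cokernel isomorphic to $\coker d\F_{\V}(u)$. Hence $\pi$ is a Fredholm map of the same index as $d\F_{\V}(u)$, computed in \eqref{eq:FredholmindexdF}. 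Because $Y_a$ is separable, the Sard--Smale theorem produces a residual subset of regular values $\V\in Y_a$ of $\pi$; for each such $\V$ the implicit function theorem gives that $\pi^{-1}(\V)=\mathcal M(\mathcal C^-,\mathcal C^+;\V)$ is a smooth Banach submanifold of $\mathcal M^{\univ}(\mathcal C^-,\mathcal C^+)$.

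\textbf{Step 4: Conclusion.} Since the sets $\widehat{\CR}^a(\YM)$ and $\CR^b(\E)$ are finite, and the bijection induced by $\Phi$ pairs up their connected components, there are only finitely many pairs $(\mathcal{C}^-,\mathcal{C}^+)$ to consider. The desired $Y_a^{\reg}$ is the intersection of the residual sets obtained in Step 3, which is itself residual in $Y_a$.

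The most delicate step is Step 2. The subtlety is twofold: one must carefully combine the density of the model perturbations (restricted, by $a$-admissibility, to complements of neighborhoods of the critical manifolds) with a unique continuation argument for the relevant linear parabolic operator, and one must match up the boundary condition at $s=0$ encoded in $N_A$ via the holonomy map $\Phi$. Both ingredients are available from \cite{Swoboda1,Web}, respectively Appendix \ref{holmapappendix}, but their simultaneous interplay is what makes the argument non-formal.
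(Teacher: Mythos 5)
Your Steps 1, 3 and 4 coincide with the paper's argument (universal moduli space, Fredholm projection, Sard--Smale, finite intersection of residual sets). The problem is Step 2, and it is a genuine gap rather than a presentational difference. The universal moduli space contains the \emph{stationary} elements $w=(A,0,\Phi(A),\V)$ with $A\in\hat{\mathcal C}^-$ and $\mathcal C^+=\Phi(\mathcal C^-)$; these are not exotic degenerate cases but exactly the configurations that produce the diagonal entries $\Theta_{ii}^k=1$ in the proof of Theorem \ref{thm:mainresult}. For such $w$ your parenthetical justification that ``$(A,\Psi,x)$ is non-constant in $s$'' fails, the set $\{s\in\R^-\mid A(s)\notin\bigcup_{\hat{\mathcal C}}U_{\hat{\mathcal C}}\}$ is \emph{empty}, and --- since $a$-admissible perturbations are by Definition \ref{def:regperturbation} supported away from the neighborhoods $U_{\hat{\mathcal C}}$ --- the perturbation directions contribute nothing to the image of $d\hat{\F}(w)$. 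Your annihilator/unique-continuation argument therefore cannot even begin there. The paper treats this case separately (Proposition \ref{prop:trivialkernel} and Lemma \ref{lem:transversstat}): convexity of $s\mapsto\|\zeta(s)\|_{L^2}^2$ along the linearized flows, combined with the infinitesimal energy inequality of Proposition \ref{prop:infindecompequation}, shows that $\ker\mathcal D_u^{\delta}$ is trivial, and then the index formula $\ind d\F(u)=\dim\mathcal C^-$ of Remark \ref{eq:finalindex}, matched against the explicit $\dim\mathcal C^-$-dimensional kernel of $d\F(u)$ coming from $T_{[A]}\mathcal C^-$, forces surjectivity with no perturbation at all.

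A secondary weakness affects your non-stationary case as well: when $\mathcal C^+\neq\Phi(\mathcal C^-)$ only \emph{one} of the two half-trajectories is guaranteed to be non-constant (e.g.\ $A(s)\equiv A^-$ with $x$ non-stationary), so the density-plus-unique-continuation argument can in general only kill one of $\eta^-$, $\eta^+$. The paper's Proposition \ref{alternativesurj} is designed for precisely this: surjectivity of \emph{either} constrained operator $\hat{\mathcal D}_{(A,\V^-)}$ or $\hat{\mathcal D}_{(x,\V^+)}$ suffices, because the unconstrained operators on the other half are always surjective (Lemmata \ref{DAsurjective} and \ref{Dgsurjective}) and the matching condition $N_A$ can then be absorbed by the freedom in the boundary value. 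To repair your proof you must add the stationary case as a separate argument and replace the simultaneous vanishing of $\eta^-$ and $\eta^+$ by the ``either/or'' mechanism.
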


\begin{proof}
Let $(\mathcal C^-,\mathcal C^+)\in\CR^a(\YM)\times\CR^b(\E)$. As shown in Theorem \ref{thm:linsurjetivederef} below, the linearized operator $d\hat{\F}(w)$ is surjective, for every $w\in\mathcal M^{\univ}(\mathcal C^-,\mathcal C^+)$. It hence follows from the implicit function theorem that $\mathcal M^{\univ}(\mathcal C^-,\mathcal C^+)$ is a smooth Banach manifold. The claim now follows from the Sard--Smale theorem for Fredholm maps between Banach manifolds. Namely, by standard results (cf.~\cite[Proposition 3.3]{Weber0}) it follows that the projection map $\pi\colon\mathcal M^{\univ}(\mathcal C^-,\mathcal C^+)\to Y_a$ is a smooth Fredholm map between Banach manifolds (with index equal to that of $\mathcal D_A$). Hence by \cite[Theorem 3.6.15]{Abraham} the set of regular values 
\begin{eqnarray*}
Y_a^{\reg}(\mathcal C^-,\mathcal C^+)\coloneqq\big\{\V\in Y_a\,\big|\,d\pi(w)\,\textrm{is surjective for all}\,w\in\mathcal M(\mathcal C^-,\mathcal C^+;\V)\big\} 
\end{eqnarray*}
of $\pi$ is residual in $Y_a$. Again by the implicit function theorem, it follows that $\mathcal M(\mathcal C^-,\mathcal C^+;\V)$ is a Banach submanifold of $\mathcal M^{\univ}(\mathcal C^-,\mathcal C^+)$ for every $\V\in Y_a^{\reg}(\mathcal C^-,\mathcal C^+)$. Now the set
\begin{eqnarray*}
Y_a^{\reg}\coloneqq\bigcap_{(\mathcal C^-,\mathcal C^+)\in\CR^a(\YM)\times\CR^b(\E)}Y_a^{\reg}(\mathcal C^-,\mathcal C^+)
\end{eqnarray*}
is the intersection of finitely many residual subsets, hence residual in $Y_a$. For this set $Y_a^{\reg}$, the assertions of the theorem are satisfied.
\end{proof}

\begin{thm}[Transversality]\label{thm:linsurjetivederef} 
The horizontal differential $d\hat{\F}(w)$ of the map $\hat\F$ as in \eqref{eq:defnmaphatf} is surjective, for every pair $(\mathcal C^-,\mathcal C^+)\in\CR^a(\YM)\times\CR^b(\E)$ and every $w\in\mathcal M^{\univ}(\mathcal C^-,\mathcal C^+)$. 
\end{thm}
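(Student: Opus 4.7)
The plan is to show that the annihilator of $\im d\hat{\F}(w)\subset\mathcal L^p$ in the dual space $(\mathcal L^p)^*$ is trivial. Theorem~\ref{Fredholmtheorem} together with Remark~\ref{remBanachtangent} guarantees that $d\F_{\V}(u)$ is Fredholm; since $d\hat{\F}(w)$ restricted to $\{0\}\times Y_a$ is the bounded operator $\hat{\V}\mapsto(\nabla\hat{\V}^-(A),\nabla\hat{\V}^+(x),0)$, it follows that $d\hat{\F}(w)$ has closed range with finite-dimensional cokernel, so surjectivity is equivalent to showing that any $\eta=(\eta_1,\eta_2,\eta_3)\in L^{p'}$ annihilating $\im d\hat{\F}(w)$ vanishes identically. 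Restricting the annihilation condition to variations of the form $(\alpha,\psi,\xi,0)$ places $\eta$ in the $L^{p'}$-cokernel of $d\F_{\V}(u)$; thus $\eta_1$ solves the backward parabolic adjoint equation $\mathcal D_A^*\eta_1=0$ on $\mathbb R^-\times\Sigma$, $\eta_2$ solves $\mathcal D_x^*\eta_2=0$ on $\mathbb R^+\times S^1$, and $\eta_3$ enters through boundary couplings at $s=0$ produced by the matching operator $N_A$. Linear parabolic regularity upgrades the whole triple to smooth sections.

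Next I would exploit the remaining perturbation directions. Testing annihilation against variations $(0,0,0,\hat{\V})$ yields
$$
\int_{-\infty}^0\langle\eta_1(s),\nabla\hat{\V}^-(A(s))\rangle_{L^2(\Sigma)}\,ds=0,\qquad\int_0^{\infty}\langle\eta_2(s),\nabla\hat{\V}^+(x(s))\rangle_{L^2(S^1)}\,ds=0
$$
for all $(\hat{\V}^-,\hat{\V}^+)\in Y_a$. Since $\widehat{\CR}^a(\YM)$ and $\CR^b(\E)$ are finite, the trajectory $A$ must spend a nonempty open interval $(s_0-\rho,s_0+\rho)\subset\mathbb R^-$ in the complement of the forbidden neighborhoods $\bigcup_{\hat{\mathcal C}}U_{\hat{\mathcal C}}$ and moreover at an irreducible connection. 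On such an interval I would use density of the sequence $(A_i)$ in irreducible connections, density of $(\eta_{ij})$ in the Coulomb slice $\ker d_{A_i}^*$, and shrinking cut-off scales $\rho_k$ to build finite linear combinations of model perturbations $\V_{\ell}^-$ whose gradients $\nabla\V_{\ell}^-(A(s))$ approximate in $L^2(\Sigma)$ an arbitrary Coulomb-slice direction localized near $s_0$. This kills the Coulomb-slice component of $\eta_1(s)$ on the interval; the residual $d_{A(s)}$-part is forced to vanish by the gauge-fixing block $-d_A^*$ in the adjoint of $\mathcal H_A$. An analogous argument using the Weber-type perturbations $\V_{\ell}^+$ produces an open set on which $\eta_2$ vanishes. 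Standard unique continuation for smooth-coefficient linear parabolic adjoint systems then upgrades these local identities to $\eta_1\equiv 0$ on all of $\mathbb R^-\times\Sigma$ and $\eta_2\equiv 0$ on all of $\mathbb R^+\times S^1$.

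With $\eta_1$ and $\eta_2$ identically zero, the boundary contributions from integration by parts reduce the annihilation condition to
$$
\langle\eta_3,\,x_0^{-1}\xi_0-D\Phi_{A_0}\alpha_0\rangle_{L^2(S^1,\mathfrak g)}=0
$$
for all admissible trace pairs $(\alpha_0,\xi_0)$. Taking $\alpha\equiv 0$ and letting $\xi\in\Z^{p,+}$ vary so that $\xi_0$ ranges over a dense subspace of $L^2(S^1,\mathfrak g)$, I conclude $\eta_3=0$, and hence $\eta=0$.

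The principal obstacle is the density/approximation argument in the second paragraph: the model perturbations $\V_{\ell}^-$ of Section~\ref{Bspaceperturbations} have gradients tied to a fixed Coulomb slice at each base connection $A_i$ and must simultaneously vanish in the $a$-admissibility neighborhoods of the critical strata. Verifying that finite combinations can nonetheless approximate arbitrary compactly supported test functions along interior trajectory points, and that the separate gauge-fixing argument indeed removes the full $\eta_1(s)$ rather than merely its Coulomb-slice component, requires a careful repetition in the present coupled setting of the single-flow transversality analyses in \cite{Swoboda1} and \cite{Web}. Once that analysis is in place, the additional matching term at $s=0$ contributes no new obstruction, because the boundary trace of $\Z^{p,+}$ is surjective onto a dense subspace of $L^2(S^1,\mathfrak g)$.
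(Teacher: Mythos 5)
There is a genuine gap: your duality argument collapses exactly in the \emph{stationary} case $\mathcal C^+=\Phi(\mathcal C^-)$. In that case the energy inequality \eqref{decompinequality} together with the gradient flow property forces every element of $\hat{\mathcal M}(\hat{\mathcal C}^-,\hat{\mathcal C}^+)$ to be of the form $u=(A,0,\Phi(A))$ with $A\in\hat{\mathcal C}^-$ constant in $s$; these elements certainly exist (they are precisely the diagonal entries $\Theta_{ii}^k=1$ that make the main theorem work) and cannot be excluded. For such a trajectory, $A(s)$ never leaves the neighborhood $U_{\hat{\mathcal C}^-}$, so by Definition \ref{def:regperturbation} every $a$-admissible perturbation has identically vanishing gradient along the whole flow line; your step ``the trajectory must spend a nonempty open interval in the complement of the forbidden neighborhoods'' is false here, and testing the annihilator against the directions $(0,0,0,\hat{\V})$ yields no information at all. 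The paper therefore splits into cases: for stationary flow lines it proves surjectivity \emph{without} perturbations (Lemma \ref{lem:transversstat}), using Proposition \ref{prop:trivialkernel} --- a convexity argument for $s\mapsto\|\zeta(s)\|_{L^2}^2$ combined with the infinitesimal energy inequality of Proposition \ref{prop:infindecompequation}, which shows $\ker\mathcal D_u^{\delta}=0$ --- and then matches $\dim\ker d\F(u)$ with the index $\ind d\F(u)=\dim\mathcal C^-$ from \eqref{eq:FredholmindexdF}. Your proposal contains no substitute for this mechanism.

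In the non-stationary case your overall strategy (annihilator plus approximation by model perturbations plus unique continuation) is workable in principle, but it amounts to redoing from scratch the single-flow transversality analyses. The paper instead decouples the problem via Proposition \ref{alternativesurj}: surjectivity of \emph{either} $\hat{\mathcal D}_{(A,\V^-)}$ or $\hat{\mathcal D}_{(x,\V^+)}$ with vanishing data at $s=0$ already implies surjectivity of the coupled operator $d\hat{\F}(w)$, because the other half-line equation is solvable by Lemma \ref{DAsurjective} (resp.~\ref{Dgsurjective}) and the matching condition $N_A$ is then satisfied by an appropriate choice of $\xi(0)$; the required surjectivity of one half is quoted from \cite[Theorem 7.1]{Swoboda1} and \cite[Proposition 7.5]{Web}. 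This decoupling also disposes of the boundary-trace issue you defer to the last paragraph. If you want to salvage your write-up, you must (a) add the stationary case as a separate argument along the lines of Proposition \ref{prop:trivialkernel}, and (b) either import the cited single-flow transversality results or carry out the approximation and unique continuation argument in full.
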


\begin{proof}
The theorem follows, combining Lemmata \ref{lem:transversstat} and \ref{lem:transversnonstat} below.
\end{proof}

To prove Theorem \ref{thm:linsurjetivederef} we distinguish the cases where $\mathcal C^+=\Phi(\mathcal C^-)$ (stationary case) or $\mathcal C^+\neq\Phi(\mathcal C^-)$ (non-stationary case). We are naturally led to this distinction by our definition of $a$-admissible perturbations which are assumed to be supported away from the critical manifolds.

\subsubsection*{Transversality at stationary flow lines}
Throughout we fix a pair $(\mathcal C^-,\mathcal C^+)\in\CR^a(\YM)\times\CR^b(\E)$ such that $\mathcal C^+=\Phi(\mathcal C^-)$ is satisfied. In this case, we show that transversality of the section $\F$ holds automatically (i.e.~for the perturbation $\V=0$).

\begin{prop}\label{prop:trivialkernel}
Let $[u]\in\mathcal M(\mathcal C^-,\mathcal C^+;\V)$ where $u\equiv(A,0,x)$ for some $A\in\mathcal C^-$ and $x=\Phi(A)$. Then $\ker\mathcal D_u^{\delta}$ is trivial.
\end{prop}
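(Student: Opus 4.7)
The plan is to combine spectral analysis of the stationary operators $\mathcal D_A^\delta$ and $\mathcal D_x^\delta$ with the matching condition and a second-variation analysis of the energy inequality of Lemma \ref{decomplemma}. Let $(\eta,\xi) = ((\alpha,\psi),\xi) \in \ker\mathcal D_u^\delta$; I will show each component vanishes.

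First I exploit that, because $u$ is stationary, the operators $\mathcal H_A$ and $H_x$ appearing in \eqref{eq:DAdelta} and \eqref{eq:Dgdelta} are $s$-independent, self-adjoint, and have discrete spectrum (Section \ref{sect:critmanifolds}). Spectrally decomposing $\eta$ and $\xi$ reduces $\mathcal D_A^\delta\eta = 0$ and $\mathcal D_x^\delta\xi = 0$ to scalar linear ODEs of the form $\dot c = -(\lambda - \delta(\beta+\beta's))c$. The integrability requirements defining $\mathcal Z^{p,-}$, $\mathcal Z^{p,+}$ together with the choice $\delta < \delta_0(\mathcal C^-,\mathcal C^+)$ rule out every mode except those whose asymptotic eigenvalue satisfies $\lambda < -\delta$ on the left half-line and $\mu > \delta$ on the right; by the $\delta_0$-bound these are precisely the strictly negative eigenvalues of $\mathcal H_A$ and the strictly positive eigenvalues of $H_x$. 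Hence $\eta(s) \in V^-_{\mathcal H_A}$ for all $s \le 0$ and $\xi(s) \in V^+_{H_x}$ for all $s \ge 0$, and in particular the boundary values $\eta(0)$, $\xi(0)$ lie in these finite-dimensional spectral subspaces.

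The central step is to show that the linearized holonomy $D\Phi_A$ maps the negative eigenspace of $\mathcal H_A$ into the negative eigenspace of $H_x$. Since $A$ is Yang--Mills, equality holds in Lemma \ref{decomplemma}: $\YM(A) = \tfrac{\pi}{4}\E(\Phi(A))$. The auxiliary functional $F := \YM - \tfrac{\pi}{4}\,\E\circ\Phi$ therefore attains a minimum at $A$. Because $\Phi(A)$ is a closed geodesic we have $D\E(\Phi(A)) = 0$, so the chain rule reduces the nonnegativity of the second variation of $F$ at $A$ to the pointwise estimate
$$\langle H_A v, v\rangle_{L^2(\Sigma)} \ge \tfrac{\pi}{4}\,\langle H_x\,D\Phi_A v,\,D\Phi_A v\rangle_{L^2(S^1)}\qquad(v \in T_A\A(P)).$$
Combining this inequality with the equality of Morse indices $\ind(A) = \ind(\Phi(A))$ (recalled in Appendix \ref{holmapappendix}) and a dimension count, I conclude that $D\Phi_A$ restricts to a linear isomorphism $V^-_{\mathcal H_A} \to V^-_{H_x}$.

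With this inclusion in hand, the matching condition $N_A(\alpha,\xi) = x_0^{-1}\xi(0) - D\Phi_{A_0}\alpha(0) = 0$ expresses the equality of a vector in $V^+_{H_x}$ (the left-hand side, by Step 1) and a vector in $V^-_{H_x}$ (the right-hand side, by the spectral inclusion just proved). Self-adjointness of $H_x$ makes $V^+_{H_x}$ and $V^-_{H_x}$ mutually $L^2(S^1,\mathfrak g)$-orthogonal, so both sides must vanish. Uniqueness for the ODE defining $\mathcal D_x^\delta$ then propagates $\xi(0) = 0$ to $\xi \equiv 0$; injectivity of $D\Phi_A$ on $V^-_{\mathcal H_A}$ gives $\alpha(0) = 0$; and the augmented-eigenvector relation $\psi = -\lambda^{-1}d_A^*\alpha$ along each nonzero spectral component forces $\psi(0) = 0$. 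The hard part will be the spectral inclusion $D\Phi_A(V^-_{\mathcal H_A}) \subseteq V^-_{H_x}$: the second-variation inequality alone only constrains a quadratic form and \emph{a priori} allows $D\Phi_A v$ to have nonzero components in $V^0_{H_x}$ or $V^+_{H_x}$, provided its $V^-_{H_x}$-component dominates in the $H_x$-quadratic form. It is the sharp dimensional match supplied by Morse-index preservation under $\Phi$ that closes this gap and upgrades the inequality to the required spectral statement.
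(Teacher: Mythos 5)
Your reduction of the kernel to boundary values is fine: for a stationary $u$ the mode-by-mode ODE analysis does show that $(\alpha(0),\psi(0))$ lies in the strictly negative spectral subspace of the augmented Hessian $\He_A+\delta$ and $\xi(0)$ in the strictly positive spectral subspace of $H_x-\delta$ (this is exactly how Lemmata \ref{DAsemiFredholm} and \ref{Dgsurjective} compute the kernels). The gap is in what you yourself flag as the central step. The inequality $\langle H_x\,D\Phi_A v,D\Phi_A v\rangle\leq\tfrac{4}{\pi}\langle\He_A v,v\rangle$ tells you that $D\Phi_A$ carries the negative eigenspace of $\He_A$ onto a subspace on which the quadratic form of $H_x$ is negative definite, and index preservation tells you this subspace has the maximal possible dimension. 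But a maximal negative-definite subspace of a quadratic form need \emph{not} be the negative eigenspace: for $H=\mathrm{diag}(-1,1)$ on $\R^2$ the line through $(2,1)$ is negative definite and of maximal dimension, yet is not the negative eigenspace. So the spectral inclusion $D\Phi_A(V^-_{\He_A})\subseteq V^-_{H_x}$ does not follow from the two facts you invoke, and the orthogonality argument that rests on it is unsupported.

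The repair is that you never needed the spectral inclusion — only the sign of the quadratic form. The matching condition equates $\xi(0)$, which lies in the positive eigenspace of $H_x$ and hence satisfies $\langle\xi(0),H_x\xi(0)\rangle\geq0$ (with equality only if $\xi(0)=0$), with $\di\Phi(A)$ applied to a vector in the negative cone of $\He_A$, which by the second-variation inequality satisfies $\langle\,\cdot\,,H_x\,\cdot\,\rangle<0$ unless the source vector vanishes. These signs are incompatible, so both sides are zero and the kernel element vanishes. This sign statement is precisely Proposition \ref{prop:infindecompequation}, and it is all the paper uses: its proof avoids spectral decomposition altogether by observing that $s\mapsto\|(\alpha(s),\psi(s))\|_{L^2}^2$ and $s\mapsto\|\xi(s)\|_{L^2}^2$ are convex along the linearized flows with zero limits at $\mp\infty$, which forces $\langle\zeta(0),\He_A\zeta(0)\rangle<0$ on one side and $\langle\xi(0),H_x\xi(0)\rangle\leq0$... rather $\geq0$ on the other, and then derives the same contradiction. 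If you replace your spectral-inclusion step by the quadratic-form sign argument, your proof goes through and is essentially equivalent to the paper's; note also that the convexity route handles the $\psi$-component automatically, whereas your final appeal to injectivity of $D\Phi_A$ on $V^-_{\He_A}$ and to the relation $\psi=-\lambda^{-1}d_A^{\ast}\alpha$ needs the same Proposition \ref{prop:infindecompequation} to be justified.
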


\begin{proof}
Let $(\alpha,\psi,\xi)\in\ker\mathcal D_u^{\delta}$ and consider the maps 
\begin{align*}
\ph^-\colon&\R^-\to\R,\quad s\mapsto\|(\alpha(s),\psi(s))\|_{L^2(\Sigma)}^2,\\
\ph^+\colon&\R^+\to\R,\quad s\mapsto\|\xi(s)\|_{L^2(S^1)}^2.
\end{align*}
As by assumption $\dot\zeta+\He_A\zeta=0$ is satisfied by $\zeta\coloneqq(\alpha,\psi)$ it follows that
\begin{eqnarray}\label{eq:firstderivph}
\dot\ph^-(s)=-2\langle\zeta,\He_A\zeta\rangle,\qquad\ddot\ph^-(s)=4\langle\He_A\zeta,\He_A\zeta\rangle\geq0.
\end{eqnarray}

The inequality in \eqref{eq:firstderivph} shows that $\ph^-$ is convex. Because $\lim_{s\to-\infty}\ph^-(s)=0$ it thus follows that $\ph^-$ vanishes identically or $\dot\ph^->0$. Assume by contradiction the second case. Then the first identity in \eqref{eq:firstderivph} shows that $\langle\zeta(0),\He_A\zeta(0)\rangle<0$ and from Proposition \ref{prop:infindecompequation} it follows that
\begin{eqnarray}\label{eq:inequllesszero}
\langle\xi_0,H_x\xi_0\rangle<0,
\end{eqnarray}
where we denote $\xi_0\coloneqq\xi(0)=\di\Phi(A)\zeta(0)$. Likewise, from the assumption that $\xi$ satisfies $\dot\xi+H_x\xi=0$ it follows that
\begin{eqnarray*}
\dot\ph^+(s)=-2\langle\xi,H_x\xi\rangle,\qquad\ddot\ph^+(s)=4\langle H_x\xi,H_x\xi\rangle\geq0,
\end{eqnarray*}
and so the map $\ph^+$ is convex. Because $\lim_{s\to\infty}\ph^+(s)=0$ it follows that $\dot\ph^+\leq0$ and hence in particular $\dot\ph^+(0)=-2\langle\xi_0,H_x\xi_0\rangle\leq0$. This contradicts \eqref{eq:inequllesszero} and shows that our assumption was wrong. Hence $\ph^-$ and therefore $\zeta$ vanish identically. Now $\xi_0=\di\Phi(A)\zeta(0)=0$, and convexity of $\ph^+$ shows that also $\xi$ vanishes identically. Hence $\ker\mathcal D_u^{\delta}$ is trivial, as claimed.
\end{proof}

\begin{lem}\label{lem:transversstat}
The horizontal differential $d\F([u])$ is surjective, for every $[u]\in\mathcal M(\mathcal C^-,\mathcal C^+;\V)$. 
\end{lem}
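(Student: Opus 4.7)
The plan is to derive surjectivity from the triviality of $\ker\mathcal{D}_u^\delta$ (Proposition \ref{prop:trivialkernel}) by a dimension count governed by the Fredholm index in Remark \ref{eq:finalindex}. In the stationary case the Atiyah--Bott correspondence preserves Morse indices, so $\ind A^- = \ind x^+$ and Remark \ref{eq:finalindex} simplifies to $\ind d\F([u]) = \dim\mathcal{C}^-$; thus it suffices to prove $\dim\ker d\F([u]) = \dim\mathcal{C}^-$.

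First I would exhibit a $\dim\mathcal{C}^-$--dimensional subspace of $\ker d\F([u])$ consisting of the constant-in-$s$ deformations
\begin{eqnarray*}
(\alpha(s),\psi(s),\xi(s)) \equiv (\alpha^-, 0, d\Phi(A)\alpha^-), \qquad \alpha^- \in T_A\hat{\mathcal{C}}^-.
\end{eqnarray*}
The linearized flow equations are satisfied because $\alpha^- \in T_A\hat{\mathcal{C}}^- \subseteq \ker\mathcal{H}_A$ and $d\Phi(A)\alpha^- \in T_{x_0}\hat{\mathcal{C}}^+ \subseteq \ker H_x$ (Morse--Bott, using that $\Phi|_{\hat{\mathcal{C}}^-}\colon\hat{\mathcal{C}}^-\to\hat{\mathcal{C}}^+$ is a diffeomorphism and hence $d\Phi(A)$ is injective on $T_A\hat{\mathcal{C}}^-$); the boundary coupling $N_A = x_0^{-1}\xi(0) - D\Phi_A\alpha(0) = 0$ is built in by construction.

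Second I would prove the reverse inequality $\dim\ker d\F([u]) \leq \dim\mathcal{C}^-$ using Proposition \ref{prop:trivialkernel}. For any kernel element with asymptotic data $(\alpha^-,\xi^+) \in T_A\hat{\mathcal{C}}^- \oplus T_{x_0}\hat{\mathcal{C}}^+$, subtracting the corresponding constant extensions yields $(\tilde\alpha,\psi,\tilde\xi) \in \Z^{\delta,p}$ solving
\begin{eqnarray*}
\mathcal{D}_u^\delta(\tilde\alpha,\psi,\tilde\xi) \;=\; (0,\,0,\,D\Phi_A\alpha^- - x_0^{-1}\xi^+),
\end{eqnarray*}
and this triple is uniquely determined by $(\alpha^-,\xi^+)$ since $\ker\mathcal{D}_u^\delta=\{0\}$. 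Consequently $\ker d\F([u])$ is in bijection with those $(\alpha^-,\xi^+)$ for which the right-hand side lies in $\mathrm{Range}\,\mathcal{D}_u^\delta$. The step reduces to the intersection identity
\begin{eqnarray*}
x_0^{-1}\,T_{x_0}\hat{\mathcal{C}}^+ \;\cap\; \pi_3\big(\mathrm{Range}\,\mathcal{D}_u^\delta\big) \;=\; \{0\},
\end{eqnarray*}
together with $D\Phi_A\,T_A\hat{\mathcal{C}}^- = x_0^{-1}T_{x_0}\hat{\mathcal{C}}^+$ (from $\Phi(\hat{\mathcal{C}}^-) = \hat{\mathcal{C}}^+$), which forces $x_0^{-1}\xi^+ = D\Phi_A\alpha^-$ and hence cuts the parameter space down to the $\dim\mathcal{C}^-$--dimensional graph constructed above.

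The principal obstacle is the displayed intersection identity. I would establish it by a spectral decomposition: using surjectivity of $\mathcal{D}_A^\delta$ and $\mathcal{D}_x^\delta$ (Lemmata \ref{DAsurjective}, \ref{Dgsurjective}) one gets $\pi_3(\mathrm{Range}\,\mathcal{D}_u^\delta) = x_0^{-1}S - D\Phi_A\,\pi_1(K)$, where $S$ is the strictly positive eigenspace of $H_{x^+}$ and $\pi_1(K)$ is the projection onto the $\alpha$-component of the negative eigenspace $K$ of $\mathcal{H}_{A^-}$. Both summands are orthogonal to the zero eigenspace $x_0^{-1}T_{x_0}\hat{\mathcal{C}}^+$ of $H_x$: the first by direct spectral orthogonality, and the second by the infinitesimal energy inequality underlying Proposition \ref{prop:infindecompequation}, which forces $D\Phi_A$ to carry the negative eigenspace of $\mathcal{H}_A$ into the sum of the negative and zero eigenspaces of $H_x$. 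Combined, these yield the desired intersection property, completing the dimension count and thereby the proof.
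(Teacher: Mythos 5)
Your overall strategy coincides with the paper's: reduce to the stationary representative, use the index formula of Remark \ref{eq:finalindex} to get $\ind d\F(u)=\dim\mathcal C^-$, exhibit the constant deformations $(\alpha_0,0,\di\Phi(A)\alpha_0)$ as a $\dim\mathcal C^-$-dimensional subspace of the kernel, and derive the opposite bound from Proposition \ref{prop:trivialkernel}. Your reduction of the kernel computation to the asymptotic data $(\alpha^-,\xi^+)$ and to the membership condition $D\Phi_{A}\alpha^--x_0^{-1}\xi^+\in\{\nu\,:\,(0,0,\nu)\in\ran\mathcal D_u^{\delta}\}$ is correct and is a genuine elaboration of the paper's one-sentence justification of this step.

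The gap is in your final paragraph. Proposition \ref{prop:infindecompequation} controls only the sign of the quadratic form: if $\langle\alpha,\He_A\alpha\rangle<0$ then $\langle\beta,H_x\beta\rangle<0$ for $\beta=\di\Phi(A)\alpha$. This says that $\beta$ has a nonzero component in the negative eigenspace of $H_x$; it does \emph{not} say that $\beta$ lies in the sum of the negative and zero eigenspaces, so your claim that $D\Phi_A$ carries the negative eigenspace $K$ of $\He_{A^-}$ into that sum is unjustified (in general $\beta$ may have a component in the positive eigenspace as well). Moreover, even if that containment held, it would give orthogonality to the \emph{positive} eigenspace, not to the zero eigenspace $x_0^{-1}T_{x_0}\hat{\mathcal C}^+$, so the orthogonality you need still would not follow; as written the last step is internally inconsistent. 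The intersection identity itself is true, but the correct argument is the quadratic-form computation: if $x_0^{-1}f_0=x_0^{-1}s-D\Phi_A\pi_1(k)$ with $f_0$ in the zero eigenspace and $s$ in the positive eigenspace of $H_x$ and $k\in K$, then $\beta\coloneqq\di\Phi(A)\pi_1(k)=s-f_0$ satisfies $\langle\beta,H_x\beta\rangle=\langle s,H_xs\rangle\geq0$, which contradicts Proposition \ref{prop:infindecompequation} unless $k=0$; then $\beta=0$ and $f_0=s$ lies in the intersection of the zero and positive eigenspaces, hence vanishes. This is exactly the convexity argument of Proposition \ref{prop:trivialkernel} transplanted to the boundary data, and with this replacement your dimension count closes and the proof is complete.
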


\begin{proof}
By applying a suitable gauge transformations we may assume that $[u]$ is represented by $u$ of the form $u=(A,0,x)$. Because $\mathcal C^+=\Phi(\mathcal C^-)$ by assumption, it follows from the gradient flow property that $u$ does not depend on $s$ and therefore satisfies the assumptions of Proposition \ref{prop:trivialkernel}. In this case it follows from \eqref{eq:finalindex} that $\ind d\F(u)=\dim\mathcal C^-$. We check that this number equals the dimension of $\ker d\F(u)$, which will imply the claim. First, the map $[\alpha_0]\mapsto(\alpha,\psi,\xi)$ where $(\alpha(s),\psi(s))\coloneqq(\alpha_0,0)$ for $s\in\R^-$ and $\xi(s)\coloneqq\di\Phi(A)\alpha_0$ for $s\in\R^+$ is an inclusion of $T_{[A]}\mathcal C^-$ into $\ker d\F(u)$. Second, this map is surjective because otherwise the kernel of the map $\mathcal D_u^{\delta}$ could not be trivial, in contradiction to Proposition \ref{prop:trivialkernel}. The claim follows.
\end{proof}

\subsubsection*{Transversality in the non-stationary case}

Let $(\mathcal C^-,\mathcal C^+)\in\CR^a(\YM)\times\CR^b(\E)$, where $b=4a/\pi$. We show surjectivity of the linearized operator in the case where $\mathcal C^+\neq\Phi(\mathcal C^-)$. Under this assumption, the following result holds true.

\begin{lem}\label{lem:transversnonstat}
The horizontal differential  $d\hat{\F}(w)$ is surjective, for every $w=[(A,\Psi,x,\V)]\in\mathcal M^{\univ}(\mathcal C^-,\mathcal C^+)$. 
\end{lem}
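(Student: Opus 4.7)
The plan is to argue by contradiction in the standard fashion for Floer-theoretic transversality with abstract perturbations. Since $\mathcal{D}_u$ is Fredholm by Theorem \ref{Fredholmtheorem}, the horizontal differential $d\hat{\F}(w)$, viewed as the compact extension of $\mathcal{D}_u$ by the variation in $\V \in Y_a$, has closed range. If $d\hat{\F}(w)$ fails to be surjective, then by Hahn--Banach there exists a nonzero element $(\eta^-,\eta^+,\eta^0)$ in $L^q(\R^- \times \Sigma) \oplus L^q(\R^+ \times S^1) \oplus L^2(S^1,\mathfrak g)$ (with $q$ conjugate to $p$) that annihilates the image. Restricting to the subspace of variations $(\zeta, v) = (\zeta, 0)$ where $\zeta = (\alpha,\psi,\xi) \in T\mathcal{B}$ shows that $(\eta^-,\eta^+,\eta^0)$ lies in the kernel of the formal adjoint of $\mathcal{D}_u$. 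Parabolic regularity then upgrades the smoothness of $\eta^{\pm}$, so that standard backward unique continuation for the corresponding adjoint equations applies: if $\eta^-$ vanishes on any open subset of $\R^- \times \Sigma$ then $\eta^- \equiv 0$, and similarly for $\eta^+$.

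The heart of the argument is then to force such an open vanishing by testing against variations of the form $(0, v)$ with $v = (v^-, v^+) \in Y_a$. Writing out $\partial_\V \hat\F$, the annihilating condition reduces to
\begin{equation*}
\int_{\R^-} \langle \eta^-(s), \nabla v^-(A(s))\rangle_{L^2(\Sigma)}\,ds + \int_{\R^+} \langle \eta^+(s), \nabla v^+(x(s))\rangle_{L^2(S^1)}\,ds = 0
\end{equation*}
for all admissible $v \in Y_a$. The non-stationary hypothesis $\mathcal C^+ \neq \Phi(\mathcal C^-)$ is used here: at least one of the trajectories $s \mapsto [A(s)]$ or $s \mapsto [x(s)]$ is non-constant, hence passes through non-critical configurations and, by the action estimate \eqref{introenergy} together with Propositions \ref{prop:samecritpoints} and \ref{prop:samecritpointsE}, spends a positive amount of time in a region of configuration space where $a$-admissible perturbations are supported. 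Pick $s_0$ in such a region on the side where the flow is non-constant; near $A(s_0)$ (resp.\ $x(s_0)$) the local slice theorem identifies a neighbourhood with $\mathcal S_{A_i}(\eps_i)$ (resp.\ a neighbourhood of $x_i$), and by density of $(A_i)$, $(x_i)$ and of the directions $(\eta_{ij})$ the gradients $\nabla \V_\ell^\pm$ evaluated along the trajectory span a dense subspace of the ambient function space near $s_0$.

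From this density, together with a bump-function argument in the $s$-variable using the cut-off $\rho_k(k^2\|\alpha_i(A)\|^2_{L^2})$, one concludes $\eta^\pm$ vanishes on an open neighbourhood of $\{s_0\}\times\Sigma$ (resp.\ $\{s_0\} \times S^1$). Unique continuation then gives $\eta^\pm\equiv 0$ on the side where the flow is non-constant. Feeding this back into the adjoint equation for the other half and coupling through the boundary condition $N_A$ (whose adjoint couples $\eta^\pm(0)$ through $D\Phi_{A_0}^*$ and links $\eta^0$ to the boundary values), one propagates the vanishing across the junction $s = 0$; concretely, the adjoint boundary condition forces $\eta^0$ to be the obstruction measured by the cokernel of $(D\Phi_{A_0}, -\operatorname{id})$, and once either $\eta^-(0)$ or $\eta^+(0)$ vanishes, the surjectivity of $\mathcal D_A^\delta$ and $\mathcal D_x^\delta$ established in the proof of Theorem \ref{Fredholmtheorem} yields $\eta^0 = 0$ and then the remaining half vanishes as well, contradicting $(\eta^-,\eta^+,\eta^0)\neq 0$.

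The main obstacle is the third step: the combination of the $a$-admissibility constraint (perturbations are supported away from the critical manifolds $U_{\hat{\mathcal C}}$) with the requirement that $\nabla \V_\ell^\pm$ sweep out a dense set of directions along the trajectory. One must verify that the non-stationary assumption really does provide a time $s_0$ at which $A(s_0)$ (resp.\ $x(s_0)$) lies outside every $U_{\hat{\mathcal C}}$, so that an admissible perturbation can be placed there; this in turn relies on the gradient-flow property (strict monotonicity of $\YM^{\V^-}$ along non-stationary trajectories) together with the disjointness of the neighbourhoods $U_{\hat{\mathcal C}}$ fixed in Section \ref{sect:critmanifolds}. A secondary technical point is the transfer of the vanishing of $\eta^\pm$ across the gluing instant $s=0$, where the nonlocal coupling through $D\Phi_{A_0}$ must be handled carefully so as not to lose the unique continuation argument at the boundary.
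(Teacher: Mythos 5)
Your overall strategy (duality, then killing the annihilator by perturbations plus unique continuation) is the standard one, and it is essentially what underlies the results the paper simply cites: the actual proof in the text is two lines long, invoking the half-line transversality results of \cite[Theorem 7.1]{Swoboda1} and \cite[Proposition 7.5]{Web} for the operators $\hat{\mathcal D}_{(A,\V^-)}$ and $\hat{\mathcal D}_{(x,\V^+)}$ \emph{with the boundary constraint} $(\alpha(0),\psi(0))=0$, respectively $\xi(0)=0$, and then feeding either conclusion into Proposition \ref{alternativesurj}. So up to the junction at $s=0$ your plan reconstructs the cited ingredients rather than using them; that part is fine in spirit, including the point you flag yourself about placing admissible perturbations outside the neighbourhoods $U_{\hat{\mathcal C}}$.

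The genuine gap is in your last step, the propagation across $s=0$. Suppose the Yang--Mills half is the non-stationary one and the loop half is stationary. Your argument gives $\eta^-\equiv 0$, and testing against variations $(\alpha,\psi,0)$ then yields only $\langle\eta^0,\,D\Phi_{A_0}\alpha(0)\rangle=0$. To conclude $\eta^0=0$ you need the range of $D\Phi_{A_0}$, composed with evaluation at $s=0$, to be dense in $L^2(S^1,\mathfrak g)$; this is a nontrivial property of the linearized holonomy map that is nowhere established in the paper and does not follow from the boundedness statement $\di\Phi(A_0)\colon W^{1,2}(\Sigma)\to W^{1,2}(S^1)$ that is quoted. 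Moreover, your assertion that ``once either $\eta^-(0)$ or $\eta^+(0)$ vanishes, the surjectivity of $\mathcal D_A^{\delta}$ and $\mathcal D_x^{\delta}$ yields $\eta^0=0$ and then the remaining half vanishes'' fails in the order you need it: testing with $\xi$ satisfying $\xi(0)=0$ only shows that $\eta^+$ annihilates the range of $\mathcal D_x^{\delta}$ restricted to $\{\xi(0)=0\}$, and that restricted operator has cokernel of dimension $\ind x^+>0$ in general (its annihilator is the space of decaying solutions of the backward adjoint equation on $\R^+$), so $\eta^+$ need not vanish. Unconstrained surjectivity of $\mathcal D_x^{\delta}$ (Lemma \ref{Dgsurjective}) only helps after you already know $\eta^0=0$. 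Proposition \ref{alternativesurj} is precisely the device that avoids both difficulties: it uses the \emph{constrained} surjectivity on the perturbed side to solve with an arbitrarily prescribed boundary value, chosen so that the coupling $N_A(\alpha,\xi)=\nu$ holds identically, and never needs $D\Phi_{A_0}$ to have dense range. You should either prove the density statement for $D\Phi_{A_0}$ or replace your junction argument by that decoupling.
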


The proof is based on the following auxiliary result.

\begin{prop}\label{alternativesurj}
Let $w=[(A,\Psi,x,\V)]\in\mathcal M^{\univ}(\mathcal C^-,\mathcal C^+)$ and denote $\V=(\V^-,\V^+)$. Then the map $d\hat{\F}(w)$ is onto if one of the following two conditions is satisfied. (i) The linear operator 
\begin{multline*}
\hat{\mathcal D}_{(A,\V^-)}\colon\{(\alpha,\psi)\in\Z^{p,-}\mid(\alpha(0),\psi(0))=0\}\times Y_a^-\to\mathcal L^{p,-},\\
(\alpha,\psi,v^-)\mapsto\mathcal D_A(\alpha,\psi)+\nabla v^-(A)
\end{multline*}
is surjective. (ii) The linear operator 
\begin{eqnarray*}
\hat{\mathcal D}_{(x,\V^+)}\colon\{\xi\in\Z^{p,+}\mid\xi(0)=0\}\times Y_b^+\to\mathcal L^{p,+},\quad(\xi,v^+)\mapsto\mathcal D_x\xi+\nabla v^+(x)
\end{eqnarray*}
is surjective.
\end{prop}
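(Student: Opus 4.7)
Given any $(\beta,\omega,\eta) \in \mathcal L^p = \mathcal L^{p,-} \oplus \mathcal L^{p,+} \oplus L^2(S^1,\mathfrak g)$, the plan is to produce a preimage under $d\hat{\F}(w)$ by exploiting the hypothesis to gain full freedom along one half-line and then using the Fredholm surjectivity on the other half-line, together with the structure of the linearized holonomy map, to match the remaining components. I will treat case (i) in detail; case (ii) is entirely symmetric.

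The first step is to observe that hypothesis (i), which a priori gives surjectivity only over pairs with vanishing trace at $s=0$, can be upgraded to allow $(\alpha(0),\psi(0))$ to be prescribed arbitrarily in the trace space of $\Z^{p,-}$. Given any target trace, extend it to some $(\bar\alpha,\bar\psi)\in\Z^{p,-}$ (possible by parabolic trace theorems), form $\beta_1 := \beta-\mathcal D_A(\bar\alpha,\bar\psi)$, and apply hypothesis (i) to obtain $(\alpha^0,\psi^0,v^-)$ with vanishing trace and $\mathcal D_A(\alpha^0,\psi^0)+\nabla v^-(A)=\beta_1$. The pair $(\alpha,\psi):=(\bar\alpha+\alpha^0,\bar\psi+\psi^0)$ then realizes $\beta$ in the first component with the prescribed trace, and $v^-$ is determined at the same time.

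For the remaining two components, I set $v^+:=0$ and invoke Lemma \ref{Dgsurjective} to find some $\xi\in\Z^{p,+}$ with $\mathcal D_x\xi=\omega$. The set of such solutions is an affine subspace modelled on $\ker\mathcal D_x$, so $\xi(0)$ may be freely shifted within $\xi^{\mathrm{part}}(0)+S$, where $S\subseteq L^2(S^1,x^*TG)$ is the finite-codimensional subspace of boundary traces of $\ker\mathcal D_x$ identified in the proof of Theorem \ref{Fredholmtheorem} (with $\mathrm{codim}\,S=\ind x^++\dim\mathcal C^+$). The third equation $N_A(\alpha,\xi)=x_0^{-1}\xi(0)-D\Phi_{A_0}\alpha(0)=\eta$ then reduces to a coupled boundary matching problem: choose $\alpha(0)$ in the trace space of $\Z^{p,-}$ and $\xi(0)\in\xi^{\mathrm{part}}(0)+S$ so that $D\Phi_{A_0}\alpha(0)+\eta=x_0^{-1}\xi(0)$.

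This boundary matching step is the crux of the proof and the main obstacle. It is solvable precisely when $x_0^{-1}S+\mathrm{image}(D\Phi_{A_0})=L^2(S^1,\mathfrak g)$, which, because $x_0^{-1}S$ has finite codimension, amounts to verifying that the composition of $D\Phi_{A_0}$ with the projection onto the finite-dimensional quotient $L^2(S^1,\mathfrak g)/x_0^{-1}S$ is surjective. I will extract this from the structural properties of the linearized holonomy map along the greater arcs collected in Appendix \ref{holmapappendix}: the key input is that variations of $A$ supported in narrow tubes around individual arcs can realize any prescribed infinitesimal change of the corresponding loop coordinate in $\mathfrak g$, so that the image of $D\Phi_{A_0}$ is large enough to cover the finite-dimensional complement of $x_0^{-1}S$. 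Case (ii) proceeds in mirror fashion: hypothesis (ii) is upgraded so that $\xi(0)$ is arbitrary, surjectivity of $\mathcal D_A$ (via Lemma \ref{DAsurjective}) replaces that of $\mathcal D_x$, and the same matching argument, this time carried out modulo the finite-codimensional trace of $\ker\mathcal D_A$ and using the dual role of $D\Phi_{A_0}$, closes the proof.
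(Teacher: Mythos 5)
Your overall strategy is the same as the paper's: solve the two half-line equations separately, use the hypothesis to gain freedom in the boundary trace on one side and Lemma \ref{DAsurjective} or \ref{Dgsurjective} on the other, and then satisfy the coupling condition $N_A(\alpha,\xi)=\nu$ by a suitable choice of boundary data; your upgrade of hypothesis (i) from vanishing trace to arbitrary trace (extend the trace, subtract, solve the residual) is exactly the reduction the paper uses implicitly. The genuine gap sits at what you yourself call the crux: the claim that $\im(D\Phi_{A_0})+x_0^{-1}S=L^2(S^1,\mathfrak g)$ is asserted, not proved. The ``structural property'' you invoke --- that variations of $A$ supported in tubes around individual arcs realize arbitrary infinitesimal changes of the loop --- appears nowhere in Appendix \ref{holmapappendix}, which only constructs $\Phi$ and proves the energy identity; and even granting it, the image of $\di\Phi(A_0)$ lies in $T_{x_{A_0}}\Omega G$, i.e.\ in variations vanishing at $t=0$, so surjectivity onto the finite-dimensional quotient by $x_0^{-1}S$ still requires an argument (for instance density of the image in $L^2$ combined with the finite codimension of $x_0^{-1}S$). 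As written, the decisive step of case (i) is unproved. To be fair, you have correctly isolated a difficulty that the paper's one-line ``argue analogously'' for case (i) glosses over, but isolating it is not the same as closing it.

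You also miss the simplification that makes case (ii) --- the case the paper writes out --- essentially trivial, so your assertion that the two cases are ``entirely symmetric'' is misleading. In case (ii) the hypothesis gives complete freedom in $\xi(0)$, and $\xi(0)$ enters the coupling condition through the \emph{invertible} map $\xi_0\mapsto x_0^{-1}\xi_0$; hence after solving the Yang--Mills half-line equation unconstrained via Lemma \ref{DAsurjective} one simply sets $\xi(0)=x_0\bigl(\nu+D\Phi_{A_0}\alpha(0)\bigr)$, and no surjectivity of $D\Phi_{A_0}$ and no use of the kernel trace space $S$ is needed at all. The two cases are genuinely asymmetric --- the free boundary datum enters through $x_0^{-1}(\cdot)$ in one and through $D\Phi_{A_0}$ in the other --- which is precisely why case (ii) closes immediately while case (i), the one you chose to detail, forces the unresolved question about the image of the linearized holonomy.
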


\begin{proof}
Assume case (ii). Let $(\beta,\eta,\nu)\in\mathcal L^{p,-}\times\mathcal L^{p,+}\times L^2(S^1,\mathfrak g)$ be given. By Lemma \ref{DAsurjective} the equation $\hat{\mathcal D}_{(A,\V^-)}(\alpha,\psi,v^-)=\beta$ admits a solution (with e.g.~$v^-=0$). Assumption (ii) implies that the equation $\hat{\mathcal D}_{(x,\V^+)}(\xi,v^+)=\eta$ can be solved for arbitrary $\xi(0)$, in particular for $\xi(0)=x(0)(\nu+D\Phi_{A(0)}\alpha(0))$, cf.~\eqref{definitionD_u} regarding the notation. For this $\xi(0)$, the condition $N_A(\alpha,\xi)=\nu$ is satisfied. It follows that $(\hat{\mathcal D}_{(A,\V^-)},\hat{\mathcal D}_{(x,\V^+)})(\alpha,\psi,\xi,v^-,v^+)=(\beta,\eta,\nu)$ and hence $d\hat{\F}(w)$ is onto. Assuming case (i) we may argue analogously, using Lemma \ref{Dgsurjective} at the place of Lemma \ref{DAsurjective}.
\end{proof}

\begin{proof}{\bf{(Lemma \ref{lem:transversnonstat})}}
From our initial assumption that $\mathcal C^+\neq\Phi(\mathcal C^-)$ it follows that the gradient flow lines $(A,\Psi)$ or $x$ are not stationary. In this situation, the transversality results \cite[Theorem 7.1]{Swoboda1} and \cite[Proposition 7.5]{Web} apply (with minor modifications to the present situation where the linear operators $\hat{\mathcal D}_{(A,\V^-)}$ and 
$\hat{\mathcal D}_{(x,\V^+)}$ are defined on half-infinite intervals). In the first case this yields surjectivity of $\hat{\mathcal D}_{(A,\V^-)}$, in the second case surjectivity of $\hat{\mathcal D}_{(x,\V^+)}$. The claim now follows from Proposition \ref{alternativesurj}.
\end{proof}

\section{Compactness}
For a given pair $(\mathcal C^-,\mathcal C^+)\in\CR^a(\YM)\times\CR^b(\E)$ (where $a>0$ and $b=\frac{4a}{\pi}$), we aim to show compactness of the moduli spaces $\mathcal M(\mathcal C^-,\mathcal C^+)$ (as defined in \eqref{modulispace}) up to so-called \emph{convergence to broken trajectories}. Let us first introduce this notion, following the book by Schwarz \cite[Definition 2.34]{Schw}.

\begin{defn}\label{compbrokentrajectories}
\upshape
A subset $K\subseteq\mathcal M(\mathcal C^-,\mathcal C^+)$ is called \emph{compact up to broken trajectories of order} $\mu=(\mu^-,\mu^+)\in\mathbbm N_0^2$ if for any sequence $[u^{\nu}]=[(A^{\nu},\Psi^{\nu},x^{\nu})]$ in $K$ the following alternative holds. Either $[u^{\nu}]$ possesses a $C^{\infty}$ convergent subsequence, or there exist the following:
\begin{compactenum}[(i)]
\item
numbers $0\leq\lambda^{\pm}\leq\mu^{\pm}$ and critical manifolds
\begin{eqnarray*}
\mathcal C_0^-=\mathcal C^-,\ldots,\mathcal C_{\lambda^-}^-\subseteq\CR(\YM)\qquad\textrm{and}\qquad\mathcal C_0^+=\mathcal C^+,\ldots,\mathcal C_{\lambda^+}^+\subseteq\CR(\E);
\end{eqnarray*}
\item
for each $0\leq j\leq\lambda^--1$ a connecting trajectory $(A_j,\Psi_j)\in\hat{\mathcal M}(\hat{\mathcal C}_j^-,\hat{\mathcal C}_{j+1}^-)$, a sequence of gauge transformations $(g_{j,\nu})_{\nu\in\mathbbm N}\subseteq\G(\hat P)$ and a sequence of reparametrization times $(\tau_{j,\nu}^-)_{\nu\in\mathbbm N}\subseteq[0,\infty)$;
\item
a triple $(A^{\ast},\Psi^{\ast},x^{\ast})\in\hat{\mathcal M}(\hat{\mathcal C}_{\lambda^-}^-,\hat{\mathcal C}_{\lambda^+}^+)$ and a sequence of gauge transformations $(g_{\lambda^-,\nu})_{\nu\in\mathbbm N}\subseteq\G(\hat P)$;
\item
for each $1\leq j\leq\lambda^+$ a connecting trajectory $x_j\in\hat{\mathcal M}(\hat{\mathcal C}_j^+,\hat{\mathcal C}_{j-1}^+)$ and a sequence of reparametrization times $(\tau_{j,\nu}^+)_{\nu\in\mathbbm N}\subseteq[0,\infty)$, with the following significance:
\end{compactenum}
There exists a subsequence (again labeled by $\nu$) such that, as $\nu\to\infty$, 
\begin{align*}
&g_{j,\nu}^{\ast}(A^{\nu}(\,\cdot\,-\tau_{j,\nu}^-),\Psi^{\nu}(\,\cdot\,-\tau_{j,\nu}^-))\to(A_j,\Psi_j)\qquad\textrm{for every}\;0\leq j\leq\lambda^--1,\\
&x^{\nu}(\,\cdot\,+\tau_{j,\nu}^+)\to x_j\qquad\textrm{for every}\;0\leq j\leq\lambda^+-1,\\
&g_{\lambda^-,\nu}^{\ast}(A^{\nu},\Psi^{\nu})\to(A^{\ast},\Psi^{\ast}),\qquad x^{\nu}\to x^{\ast}
\end{align*}
in $C^{\infty}$ on all compact domains $I\times\Sigma$, respectively $I\times S^1$, where $I\subseteq\R^-$ (respectively $I\subseteq\R^+$) is a compact interval.  
\end{defn}

Here the notation $\hat{\mathcal M}(\hat{\mathcal C}_j^{\pm},\hat{\mathcal C}_{j\mp1}^{\pm})$ refers to the moduli spaces of connecting trajectories for the gradient flows of $\YMV$, respectively of $\EV$ as introduced in \cite{Swoboda1} and \cite{Web}. As we show next, the moduli space $\mathcal M(\mathcal C^-,\mathcal C^+)$ is compact in the sense of Definition \ref{compbrokentrajectories}.

\begin{thm}[Compactness of moduli spaces]\label{Compactnessmodulispaces}
For every pair $(\mathcal C^-,\mathcal C^+)\in\CR(\YM)\times\CR(\E)$, the moduli space $\mathcal M(\mathcal C^-,\mathcal C^+)$ is empty or compact up to convergence to broken trajectories of order $\mu=(\mu^-,\mu^+)$, where 
\begin{eqnarray}\label{eq:orderbrokentraj}
\mu^-+\mu^+=\ind A^-+\dim\mathcal C^--\ind x^+.
\end{eqnarray}
(The integers $\ind A^-$ and $\ind x^+$ denote the number of negative eigenvalues of $\mathcal H_{A^-}$, respectively of $H_{x^+}$ (for $A^-\in\hat{\mathcal C}^-$ and $x^+\in\hat{\mathcal C}^+$).
\end{thm}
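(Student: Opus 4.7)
The strategy is to combine the compactness theorems already established for each individual gradient flow (cf.~\cite{Swoboda1} for $\YM^{\V^-}$ on $\mathbb{R}^-$ and \cite{Web} for $\E^{\V^+}$ on $\mathbb{R}^+$) and then splice the resulting limits together through the coupling condition $x(0)=h\Phi(A(0))$ at $s=0$. Let $[u^\nu]=[(A^\nu,\Psi^\nu,x^\nu)]\in\mathcal M(\mathcal C^-,\mathcal C^+)$. The first step is to secure uniform energy bounds. Monotonicity of $\YM^{\V^-}$ along the flow yields $\YM^{\V^-}(A^\nu(s))\leq\YM^{\V^-}(A^-)\leq a$ for every $s\leq0$. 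At the gluing time $s=0$ the coupling combined with the gauge- and conjugation-invariance of both functionals together with the energy inequality \eqref{introenergy} of Lemma \ref{decomplemma} yields
\begin{equation*}
\E^{\V^+}(x^\nu(0))\leq\tfrac{4}{\pi}\YM^{\V^-}(A^\nu(0))\leq\tfrac{4a}{\pi}=b,
\end{equation*}
and monotonicity along the heat flow propagates this estimate to all $s\geq0$.

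The second step is to apply the individual compactness theorems. On the negative half-line, after passing to a subsequence and applying gauge transformations $g_{j,\nu}\in\G(\hat P)$ together with time shifts $\tau_{j,\nu}^-\geq0$ ($0\leq j\leq\lambda^--1$), $(A^\nu,\Psi^\nu)$ decomposes into a chain of Yang--Mills connecting trajectories $(A_j,\Psi_j)\in\hat{\mathcal M}(\hat{\mathcal C}_j^-,\hat{\mathcal C}_{j+1}^-)$ together with a terminal piece $(A^*,\Psi^*)$ obtained without reparametrization (i.e.~$\tau_{\lambda^-,\nu}^-=0$) such that $g_{\lambda^-,\nu}^*(A^\nu,\Psi^\nu)\to(A^*,\Psi^*)$ in $C^\infty_{\loc}(\R^-\times\Sigma)$. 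The analogous theorem of Weber provides, on a further subsequence, reparametrization times $\tau_{j,\nu}^+\geq0$ and connecting heat-flow trajectories $x_j\in\hat{\mathcal M}(\hat{\mathcal C}_j^+,\hat{\mathcal C}_{j-1}^+)$ for $1\leq j\leq\lambda^+$, as well as a terminal piece $x^*$ with $x^\nu\to x^*$ in $C^\infty_{\loc}(\R^+\times S^1)$.

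The third and crucial step is to match these two terminal pieces at $s=0$. Because the convergences above are $C^\infty$ on neighbourhoods of $\{s=0\}$ on each side, in particular the slices $A^\nu(0)$ and $x^\nu(0)$ converge smoothly; by continuity of the holonomy map $\Phi\colon\A(P)\to\Lambda G$ (cf.~Appendix \ref{holmapappendix}) and its equivariance under gauge transformations (which act by conjugation in $G$ on $\Phi$), the sequence $\Phi(A^\nu(0))$ converges, modulo a uniformly bounded conjugation by $g_{\lambda^-,\nu}(0)\in G$. The elements $h^\nu\in G$ implementing the coupling $x^\nu(0)=h^\nu\Phi(A^\nu(0))$, adjusted by the conjugating factor, then lie in the compact group $G$, hence admit a subsequential limit $h^*\in G$ satisfying $x^*(0)=h^*\Phi(A^*(0))$. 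Consequently $(A^*,\Psi^*,x^*)\in\hat{\mathcal M}(\hat{\mathcal C}_{\lambda^-}^-,\hat{\mathcal C}_{\lambda^+}^+)$, which is the required terminal piece of the broken configuration. The order bound \eqref{eq:orderbrokentraj} follows from Theorem \ref{Fredholmtheorem} and Remark \ref{eq:finalindex}: summing Fredholm indices along the broken chain and using that every nontrivial connecting trajectory consumes strictly positive index yields $\mu^-+\mu^+\leq\ind A^-+\dim\mathcal C^--\ind x^+$.

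The main obstacle is Step three. The groups of gauge transformations on the Yang--Mills side and of left translations by $G$ on the loop side are not independent in the presence of the coupling, and one must track how a gauge transformation used to secure smooth convergence of $A^\nu(0)$ modifies the holonomy by conjugation in $G$; keeping this conjugation and the translation $h^\nu$ in a single compact orbit of $G$ is what permits the coupling to survive in the limit. A secondary delicate point is ruling out a ``boundary breaking'' in which either $\tau_{\lambda^-,\nu}^-$ or $\tau_{0,\nu}^+$ escapes to $+\infty$ so that no piece of the configuration genuinely reaches $s=0$; this pathology is excluded by the bounded total energy drop on each side together with the equality case of the energy inequality \eqref{introenergy} at Yang--Mills critical points.
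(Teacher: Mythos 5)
Your proposal follows essentially the same route as the paper: the uniform energy bound $\sup_{s\geq0}\E^{\V^+}(x^\nu(s))=\E^{\V^+}(x^\nu(0))\leq\frac{4}{\pi}\YM^{\V^-}(A^\nu(0))$ obtained from the coupling and Theorem \ref{thmdecompinequality}, followed by the two one-sided compactness results (Lemmata \ref{lem:compactnessYM} and \ref{lem:compactnessLoop}), exponential decay to place the limits in the appropriate moduli spaces, and an index count for the order of breaking. The only difference is one of emphasis: you spell out the matching of the coupling condition at $s=0$ (tracking the conjugation by $g_{\lambda^-,\nu}(0)$ and the compactness of $G$), which the paper subsumes under the "standard arguments as in \cite[Proposition 2.35]{Schw}"; this is a correct and welcome elaboration rather than a different method.
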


To prove the theorem we need the following two lemmata, the first one being due to the author \cite{Swoboda1} and the second one due to Weber \cite{Web}.

\begin{lem}\label{lem:compactnessYM}
Let $\mathbbm A^{\nu}=A^{\nu}+\Psi^{\nu}\,ds$, $\nu\in\mathbbm N$, be a sequence of solutions of the perturbed Yang--Mills gradient flow equation \eqref{EYF} on $\R^-\times\Sigma$. Assume there exists a critical manifold $\hat{\mathcal C}^-\in\hat{\mathcal{CR}}(\YM)$ such that $\mathbbm A^{\nu}(s)$ converges to $\hat{\mathcal C}^-$ as $s\to-\infty$, for every $\nu\in\mathbbm N$. Then there exists a sequence $g^{\nu}\in\mathcal G(\hat P)$ of gauge transformations such that a subsequence of the gauge transformed sequence $(g^{\nu})^{\ast}\mathbbm A^{\nu}$ converges uniformly on compact sets $I\times\Sigma$ to a solution $\mathbbm A^{\ast}$ of \eqref{EYF}. 
\end{lem}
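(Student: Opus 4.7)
The plan is a three-step argument combining uniform a priori energy bounds, local $C^\infty$ regularity in a suitable gauge, and a global gauge patching via a diagonal subsequence. For Step 1, observe that $\YMV$ decreases along \eqref{EYF} and is constant on each connected component of its critical set, so the assumption $\mathbbm A^\nu(s) \to \hat{\mathcal C}^-$ as $s \to -\infty$ yields $\YMV(A^\nu(s)) \leq \YMV|_{\hat{\mathcal C}^-}$ uniformly in $\nu$ and $s \leq 0$. Combined with boundedness of $\V^-$, this produces a uniform $L^2$ bound on the curvature $F_{A^\nu(s)}$, and integrating the gradient-flow identity $\tfrac{d}{ds}\YMV(A^\nu(s)) = -\|d_{A^\nu}^{\ast} F_{A^\nu} + \nabla\V^-(A^\nu)\|_{L^2(\Sigma)}^2$ over $\R^-$ further yields a uniform $L^2(\R^- \times \Sigma)$ bound on this dissipation quantity.

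For Step 2, I fix a compact cylinder $I \times \Sigma$ with $I \subset \R^-$ compact. Using the gauge invariance \eqref{eq:extendedgaugetransf} (solving an ODE in $s$) one first puts $\mathbbm A^\nu$ into temporal gauge $\Psi^\nu \equiv 0$ on $I \times \Sigma$, reducing the flow equation to the purely parabolic equation for $A^\nu$. Covering $I \times \Sigma$ by finitely many patches and applying the $L^2$ local slice Theorem \ref{locslicethm} produces, after further gauge transformations, representatives in Coulomb gauge relative to a smooth reference connection. The parabolic bootstrap developed in \cite{Swoboda1}, combined with the gradient estimates of Proposition \ref{prop:L2estgrad} and the uniform energy bounds of Step 1, then yields $C^k$ estimates of arbitrary order on each patch along a subsequence. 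Since $\dim \Sigma = 2$ the Yang--Mills functional is scaling subcritical and no bubbling can occur, so finiteness of the local energy directly rules out concentration.

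It remains (Step 3) to paste the local gauges into a single global sequence $g^\nu \in \G(\hat P)$. Proceeding by induction over the exhaustion $I_k := [-k, 0]$ of $\R^-$, suppose $g_{k-1}^\nu$ have been found so that $(g_{k-1}^\nu)^{\ast} \mathbbm A^\nu$ converges in $C^\infty(I_{k-1} \times \Sigma)$ along a subsequence. Applying Step 2 on $I_{k+1} \times \Sigma$ and modifying the resulting gauges by transitions $h_k^\nu$ equal to the identity on a neighborhood of $I_{k-1}$ produces $g_k^\nu := g_{k-1}^\nu h_k^\nu$ for which $(g_k^\nu)^{\ast} \mathbbm A^\nu$ converges in $C^\infty(I_k \times \Sigma)$; a diagonal subsequence yields the required global gauge, and the limit $\mathbbm A^\ast$ satisfies \eqref{EYF} by passage to $C^\infty_{\loc}$ limits. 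The main obstacle is precisely the construction of the cutoff transitions $h_k^\nu$: the local Coulomb gauges on overlapping cylinders are a priori inequivalent, and building $h_k^\nu$ so as to preserve the convergence already achieved on $I_{k-1}$ while still converging on the newly added interval requires an interpolation argument that interacts delicately with the parabolic structure and the time-dependent gauge group; by contrast, the energy bounds of Step 1 and the local regularity of Step 2 are by now standard for Yang--Mills in two dimensions.
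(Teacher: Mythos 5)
The paper does not give an argument here at all: it simply cites \cite[Theorem 6.1]{Swoboda1}, where the compactness statement is proved. Your three-step outline (uniform energy bounds from the gradient-flow structure, local $C^\infty$ regularity after gauge fixing, and patching of local gauges via a diagonal subsequence) is indeed the standard strategy and is essentially the one carried out in that reference. Your Step 1 is correct, modulo one point worth making explicit: the dissipation identity $\frac{d}{ds}\YMV(A^\nu(s))=-\|d_{A^\nu}^{\ast}F_{A^\nu}+\nabla\V^-(A^\nu)\|_{L^2}^2$ holds in the presence of the gauge-fixing term $-d_A\Psi$ only because $\langle d_A^{\ast}F_A,d_A\Psi\rangle=\langle F_A,[F_A\wedge\Psi]\rangle=0$ and $\nabla\V^-(A)\perp\im d_A$ by gauge invariance of $\V^-$; without this observation the monotonicity of $\YMV$ along solutions of \eqref{EYF} is not immediate. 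The uniformity in $\nu$ of the bound then follows, as you say, because all flow lines emanate from the single critical manifold $\hat{\mathcal C}^-$, on which $\YMV$ is constant.

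The genuine gap is the one you yourself flag at the end: Step 3 is not a proof but a statement of the difficulty. The lemma asserts the existence of a \emph{single} sequence $g^\nu\in\G(\hat P)$ of time-dependent gauge transformations, defined on all of $\R^-\times\Sigma$, for which the transformed sequence converges on every compact cylinder; producing this from local Coulomb gauges requires constructing the transition gauge transformations $h_k^\nu$, showing they converge in a sufficiently strong topology on the overlaps, and interpolating them to the identity without destroying the convergence already obtained on $I_{k-1}\times\Sigma$. This local-to-global gauge construction is the technical core of any Uhlenbeck-type compactness theorem (cf.\ \cite{Wehrheim}), and in the parabolic setting it interacts with the temporal gauge condition you imposed earlier: the temporal gauge on $I_{k+1}\times\Sigma$ is only determined up to an $s$-independent gauge transformation, and reconciling it with the gauge already fixed on $I_{k-1}\times\Sigma$ is exactly the step you leave open. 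A second, smaller gap sits in Step 2: Theorem \ref{locslicethm} provides a Coulomb gauge only for connections that are $L^2$-close to the reference connection $A_0$, whereas your sequence is a priori controlled only through curvature bounds; one needs either Uhlenbeck's local gauge theorem or a compactness argument to produce suitable reference connections before the slice theorem applies. As written, the proposal correctly identifies the architecture of the proof but does not complete it; the missing patching argument is supplied in \cite[Theorem 6.1]{Swoboda1}, which is what the paper invokes.
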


\begin{proof}
For a proof we refer to \cite[Theorem 6.1]{Swoboda1}.
\end{proof}

\begin{lem}\label{lem:compactnessLoop}
Let $x^{\nu}$, $\nu\in\mathbbm N$, be a sequence of solutions of the gradient flow equation \eqref{EEF} on the interval $\R^+$. Assume the uniform energy bound
\begin{eqnarray}\label{eq:unifboundenergy}
\sup_{s\in\R^+}\E^{\V^+}(x^{\nu}(s))\leq C
\end{eqnarray}
holds for some constant $C$ and all $\nu\in\mathbbm N$. Then there exists a solution $x^{\ast}$ of \eqref{EEF} on $\R^+\times S^1$ such that a subsequence of $x^{\nu}$ converges uniformly to $x^{\ast}$ on compact sets $I\times S^1$. 
\end{lem}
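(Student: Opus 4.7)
The plan is to follow the strategy of Weber's compactness theorem \cite[Theorem 1.10]{Web} and observe that the present setting---where the target is a compact Lie group $G$---is a direct specialization. First, I would note that since $G$ is compact, upon isometrically embedding $G \hookrightarrow \mathbbm R^N$ the sequence $(x^\nu)$ is automatically uniformly bounded in $L^\infty(\mathbbm R^+\times S^1,\mathbbm R^N)$. The energy identity
\[
\frac{d}{ds}\E^{\V^+}(x(s)) = -\|\partial_s x(s)\|_{L^2(S^1)}^2,
\]
valid along solutions of \eqref{EEF}, combined with the uniform bound \eqref{eq:unifboundenergy} and the non-negativity of $\V^+$ (Remark \ref{rem:perturbationsEnonneg}), shows that
\[
\sup_{\nu\in\mathbbm N,\,s\in\mathbbm R^+}\|\partial_t x^\nu(s)\|_{L^2(S^1)}^2 \leq 2C, \qquad \int_0^\infty \|\partial_s x^\nu(s)\|_{L^2(S^1)}^2 \,ds \leq C.
\]
Since $S^1$ is one-dimensional, the Sobolev embedding $W^{1,2}(S^1)\hookrightarrow C^0(S^1)$ promotes the first bound to uniform equicontinuity of the loops $x^\nu(s)$ in the variable $t$.

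The core of the argument is a parabolic bootstrap giving uniform $C^k$-bounds for $x^\nu$ on every compact cylinder $I \times S^1 \subseteq \mathbbm R^+ \times S^1$. To carry it out, I would work in coordinate patches of $G$ obtained via the exponential map, rewrite \eqref{EEF} as a semilinear heat equation on the Lie algebra $\mathfrak g$ with a nonlinearity that is controlled by the $L^\infty$ and Lipschitz bounds on $\nabla\V^+$ (analogous to those stated for $\V^-$ in Proposition \ref{prop:L2estgrad}), and apply the standard Calder\'on--Zygmund $L^p$ theory for the scalar heat operator. The $L^2$ bound on $\partial_t x^\nu$ together with finite total action $\int_0^\infty\|\partial_s x^\nu\|_{L^2}^2\,ds$ supplies the base estimates; iteration yields uniform $W^{k,p}_{\loc}$-bounds for all $k$ and $p$, hence $C^k_{\loc}$-bounds by Sobolev embedding.

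Given these uniform $C^k_{\loc}$-bounds, the Arzel\`a--Ascoli theorem furnishes a subsequence (still denoted $x^\nu$) converging in $C^{\infty}_{\loc}(\mathbbm R^+\times S^1, G)$ to a limit $x^\ast\colon\mathbbm R^+\times S^1 \to G$, and passing to the limit in \eqref{EEF} shows that $x^\ast$ is itself a solution, which is the desired conclusion.

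The main obstacle is the parabolic bootstrap on the nonlinear target $G$; however, all estimates ultimately reduce to scalar parabolic theory in exponential charts, and the full analytic machinery has been developed by Weber in \cite[Sections 3--6]{Web}. What remains in our situation is the verification that Weber's hypotheses are satisfied in the Lie-group setting, which is immediate from compactness of $G$, biinvariance of the Riemannian metric (yielding uniform bounds on the curvature and its covariant derivatives), and the perturbation estimates established for the space $Y^+$ in Section \ref{sect:perturbations}.
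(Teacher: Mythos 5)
Your proposal is correct and follows essentially the same route as the paper, which simply defers to Weber's compactness result (\cite[Proposition 4.14]{Web}); your sketch is precisely Weber's argument (uniform slicewise $W^{1,2}$ bounds from the energy, parabolic bootstrap in charts, Arzel\`a--Ascoli, and passage to the limit in the equation) specialized to the compact Lie group target. One trivial slip: with the sign conventions of Theorem \ref{thmdecompinequality} the perturbation $\V^+$ is non-\emph{positive} rather than non-negative, but since $|\V^+|$ is uniformly bounded by construction of $Y^+$ this does not affect the uniform bound on $\|\partial_t x^\nu(s)\|_{L^2(S^1)}$.
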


\begin{proof}
For a proof we refer to \cite[Proposition 4.14]{Web}.
\end{proof}

\begin{proof}{\bf{(Theorem \ref{Compactnessmodulispaces})}}
Let $u^{\nu}=(A^{\nu},\Psi^{\nu},x^{\nu})$, $\nu\in\mathbbm N$, be a sequence in $\hat{\mathcal M}(\hat{\mathcal C}^-,\hat{\mathcal C}^+)$. The sequence $x^{\nu}$ satisfies condition \eqref{eq:unifboundenergy}. Namely, thanks to the energy inequality \eqref{decompinequality} it follows for a constant $C(\mathcal C^-)$ and all $\nu$ that
\begin{eqnarray*}
\sup_{s\in\R^+}\E^{\V^+}(x^{\nu}(s))=\E^{\V^+}(x^{\nu}(0))\leq\frac{4}{\pi}\YM^{\V^-}(A^{\nu}(0))\leq C(\mathcal C^-).
\end{eqnarray*}
The last inequality follows from the assumption that $A^{\nu}(s)$ converges to $\hat{\mathcal C}^-$ as $s\to-\infty$. Hence Lemmata \ref{lem:compactnessYM} and \ref{lem:compactnessLoop} apply and show that there exists a subsequence of $u^{\nu}$ (which we still label by $\nu$) and a sequence $g^{\nu}$ of gauge transformations such that $(g^{\nu})^{\ast}(A^{\nu},\Psi^{\nu})$ and $x^{\nu}$ converge uniformly on compact sets $I\times\Sigma$, respectively $I\times S^1$. Furthermore, the limit as $\nu\to\infty$ of $(g^{\nu})^{\ast}|_{I\times\Sigma}(A^{\nu}|_{I\times\Sigma},\Psi^{\nu}|_{I\times\Sigma})$ (respectively of $x^{\nu}|_{I\times S^1}$) is the restriction of a solution of \eqref{EYF} or \eqref{EEF} of finite energy at most $C(\mathcal C^-)$. Then the exponential decay results \cite[Theorem 4.1]{Swoboda1} and \cite[Theorem 1.8]{Web} imply that every such finite energy solution is contained in some moduli space $\hat{\mathcal M}(\hat{\mathcal C}_0^-,\hat{\mathcal C}_1^-)$, $\hat{\mathcal M}(\hat{\mathcal C}_0^+,\hat{\mathcal C}_1^+)$, or $\hat{\mathcal M}(\hat{\mathcal C}_0^-,\hat{\mathcal C}_0^+)$ for some $\hat{\mathcal C}_j^-\in\widehat{\CR}^a(\YM)$, respectively $\hat{\mathcal C}_j ^+\in\widehat{\CR}^b(\E)$, where $j=0,1$ and $a=\YM(\hat{\mathcal C}^-)$, $b=4a/\pi$. (Here we use the notation of Section \ref{sect:critmanifolds}). Convergence after reparametrization as required in Definition \ref{compbrokentrajectories} and the relation \eqref{eq:orderbrokentraj} then follow from standard arguments as in \cite[Proposition 2.35]{Schw}.
\end{proof}

\section{Chain isomorphism of Morse complexes}

\subsection{The chain map}\label{sec:chainmap}
\setcounter{footnote}{0}
Let $a\geq0$ be a regular value of $\YM$ and set $b\coloneqq4a/\pi$. Throughout this section we fix an admissible perturbation $\V=(\V^-,\V^+)\in Y_a^{\reg}$ (with $Y_a^{\reg}\subseteq Y_a$ as in Theorem \ref{thm:transversality}) such that the conditions of Theorem \ref{thmdecompinequality} are satisfied. Let $h\colon\crit^a(\YM)/\mathcal G_0(P)\to\R$ be a smooth Morse--Smale function, i.e.~a smooth Morse function such that for all $x,y\in\crit(h)$ the stable and unstable manifolds $W_h^s(x)$ and $W_h^u(y)$ of $h$ intersect transversally. We let 
\begin{eqnarray*}
CM_{\ast}^{a,-}\coloneqq CM_{\ast}^a\big(\A(P)/\mathcal G_0(P),\V^-,h\big),\qquad CM_{\ast}^{b,+}\coloneqq CM_{\ast}^b\big(\Lambda G/G,\V^+,h\big)
\end{eqnarray*}
denote the Morse--Bott complexes as in Section \ref{sect:Morsehomologies}. The corrsponding Morse homology groups will for short be denoted by
\begin{eqnarray*}
HM_{\ast}^{a,-}\coloneqq HM_{\ast}^a\big(\A(P)/\mathcal G_0(P),\V^-,h\big),\qquad HM_{\ast}^{b,+}\coloneqq HM_{\ast}^b\big(\Lambda G/G,\V^+,h\big).
\end{eqnarray*}
 
The construction of the moduli space $\mathcal M(\mathcal C^-,\mathcal C^+)$ in Section \ref{sect:Hybridmodspace} gives rise to a chain map $\Theta\colon CM_{\ast}^{a,-}\to CM_{\ast}^{b,+}$ as we shall describe next.

\begin{defn}\label{hybridflowline}
\upshape
Fix critical points $x^-,x^+\in\crit(h)$ and integers $m^-,m^+\geq1$. A \emph{hybrid flow line from} $x^-$ \emph{to} $x^+$ \emph{with} $m^-$ \emph{upper and} $m^+$ \emph{lower cascades} is a tuple 
\begin{multline*}
(\underline{x}^-,\underline{x}^0,\underline{x}^+,T^-,T^+)\\
=((x_j^-)_{j=1,\ldots,m^-},\underline{x}^0,(x_j^+)_{j=1,\ldots,m^+},(t_j^-)_{j=1,\ldots,m^-},(t_j^+)_{j=0,\ldots,m^+-1}),
\end{multline*}
where for each $j$, $x_j^-\colon\R^-\to\A(P)/\G_0(P)$ is a nonconstant solution of the Yang--Mills gradient flow equation \eqref{EYF}, $x_j^+\colon\R^+\to\Lambda G/G$ is a nonconstant solution of the loop group gradient flow equation \eqref{EEF}, $t_j^{\pm}\in\mathbbm R^+$, and the following conditions are satisfied.
\begin{compactenum}[(i)]
\item
For each $1\leq j\leq m^{\pm}-1$ there exists a solution $y_j^{\pm}\in C^{\infty}(\mathbbm R,\crit(h))$ of the gradient flow equation $\dot y_j^{\pm}=-\nabla h(y_j^{\pm})$ such that $\lim_{s\to\infty}x_j^{\pm}(s)=y_j^{\pm}(0)$ and $\lim_{s\to-\infty}x_{j+1}^{\pm}(s)=y_j^{\pm}(t_j)$.
\item
There exist $p^-\in W_h^u(x^-)$ and $p^+\in W_h^s(x^+)$ such that $\lim_{s\to-\infty}x_1^-(s)=p^-$ and $\lim_{s\to\infty}x_m^+(s)=p^+$.
\item
There exist $\mathcal C^-\in\mathcal{CR}(\YM)$ and $\mathcal C^+\in\mathcal{CR}(\E)$ such that $\underline{x}^0=[(u^-,u^+)]\in\mathcal M(\mathcal C^-,\mathcal C^+)$. Furthermore, there exist solutions $y_{m^-}^-$ and $y_0^+$ of the gradient flow equations $\dot y_{m^-}^-=-\nabla h(y_{m^-}^-)$ and $\dot y_0^+=-\nabla h(y_0^+)$ satisfying the conditions 
\begin{align*}
&\lim_{s\to\infty}x_{m^-}^-(s)=y_{m^-}^-(0),\qquad\lim_{s\to-\infty}u^-(s)=y_{m^-}^-(t_{m^-}^-),\\
&\lim_{s\to\infty}u^+(s)=y_0^+(0),\qquad\lim_{s\to-\infty}x_1^+(s)=y_0^+(t_0^+).
\end{align*}
\end{compactenum}
A {\emph{hybrid flow line with $m^-=0$ upper cascades and $m^+\geq1$ lower cascades}} is a tuple $(\underline{x}^0,\underline{x}^+,T^+)=(\underline{x}^0,(x_j^+)_{j=1,\ldots,m^+},(t_j^+)_{j=0,\ldots,m^+-1})$ satisfying conditions (i-iii) above with the following adjustment in (ii). Here we require the existence of $p^-\in W_h^u(x^-)$ such that $\lim_{s\to-\infty}u^-(s)=p^-$. Conditions involving flow lines $x_j^-$ and times $t_j^-$ are empty in this case. A {\emph{hybrid flow line with $m^-\geq0$ upper cascades and $m^+=0$ lower cascades}} is defined analogously. Conditions involving flow lines $x_j^+$ and times $t_j^+$ are then empty.
\end{defn}

\begin{figure}\label{bild1}   
\includegraphics[scale=0.46]{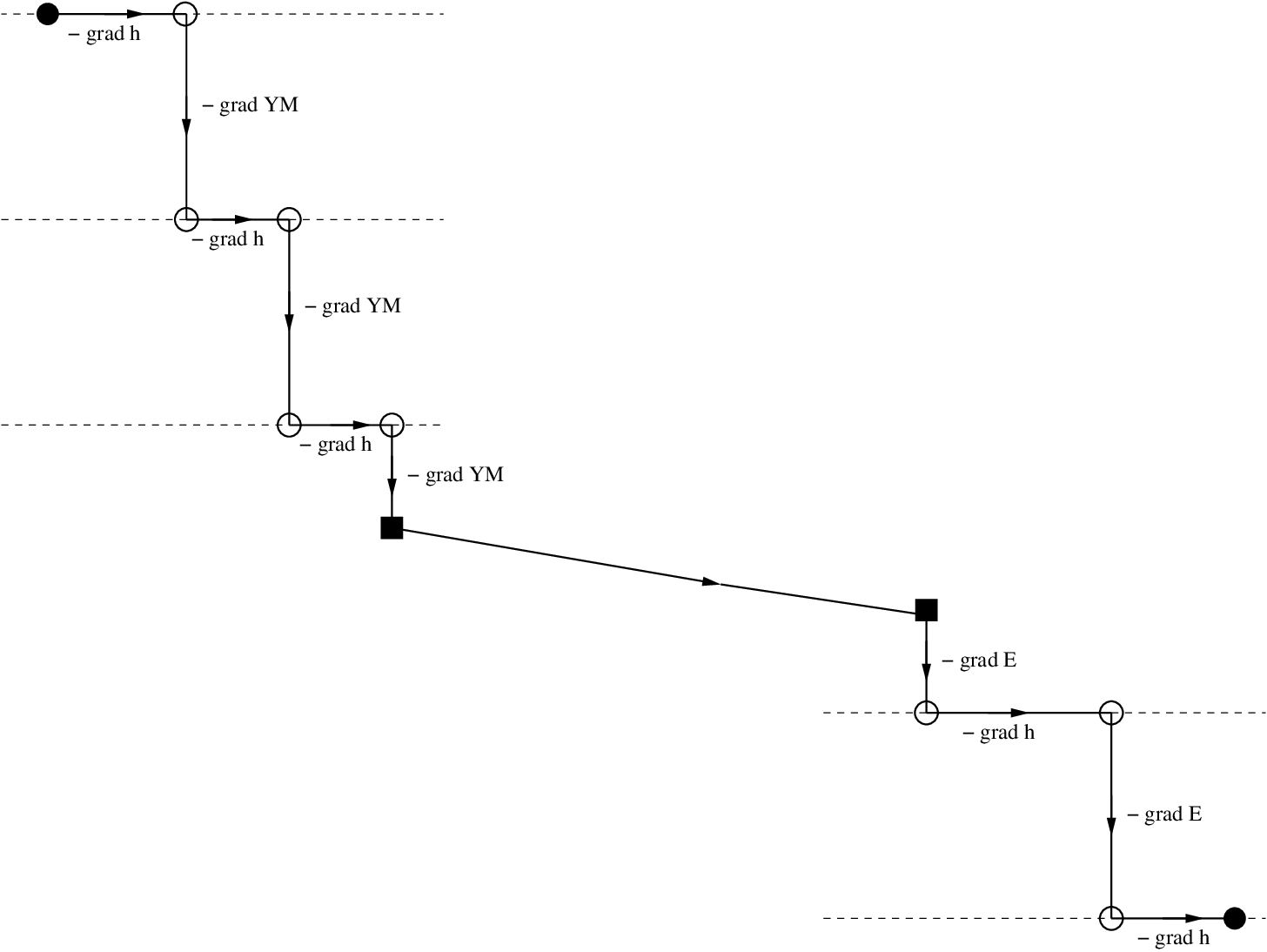}
\caption{A hybrid flow line with $2$ upper and $1$ lower cascades.} 
\end{figure}

\begin{figure}[h]\label{bild1}
\begin{center}
\end{center}
\caption{A hybrid flow line with $2$ upper and $1$ lower cascades.} 
\end{figure}

For $(m^-,m^+)\in\mathbbm N_0^2$ we denote by $\hat{\mathcal M}_{(m^-,m^+)}^{\hybr}(x^-,x^+)$ the set of hybrid flow lines from $x^-$ to $x^+$ with $m^-$ upper and $m^+$ lower cascades. The group $\mathbbm R^{m^-}\times\mathbbm R^{m^+}$ acts on tuples $(\underline{x}^-, \underline{x}^+)$ by time-shifts. The resulting quotient space is called the {\emph{moduli space of hybrid flow lines from $x^-$ to $x^+$ with $m^-$ upper and $m^+$ lower cascades}}. We call 
\begin{eqnarray*}
\mathcal M^{\hybr}(x^-,x^+)\coloneqq\bigcup_{(m^-,m^+)\in\mathbbm N_0^2}\mathcal M_{(m^-,m^+)}^{\hybr}(x^-,x^+)
\end{eqnarray*}
the moduli space of {\emph{hybrid flow lines with cascades from}} $x^-$ {\emph{to}} $x^+$. 
  
\begin{lem}
The dimension of $\mathcal M^{\hybr}(x^-,x^+)$ is given by the formula 
\begin{eqnarray*} 
\dim\mathcal M^{\hybr}(x^-,x^+)=\Ind(x^-)-\Ind(x^+).
\end{eqnarray*}
(The nonnegative integers $\Ind(x^{\pm})$ denote the sum of the Morse indices of $x^-$ as a critical point of $\YM$ and of $h$, respectively the sum of the Morse indices of $x^+$ as a critical point of $\E$ and of $h$).
\end{lem}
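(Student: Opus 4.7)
The plan is to realize $\mathcal{M}^{\hybr}_{(m^-,m^+)}(x^-,x^+)$ as a transverse fibered product of smooth manifolds and then to collect dimensions by a telescoping index count, following the Morse--Bott cascades formalism of Frauenfelder \cite{Frauenfelder1}. Write the intermediate critical manifolds as $\mathcal{C}_0^-,\dots,\mathcal{C}_{m^-}^-$ on the Yang--Mills side (with $x^-\in\mathcal{C}_0^-$ and $\mathcal{C}_{m^-}^-=\mathcal{C}^-$) and $\mathcal{C}_0^+,\dots,\mathcal{C}_{m^+}^+$ on the loop side (with $\mathcal{C}_0^+=\mathcal{C}^+$ and $x^+\in\mathcal{C}_{m^+}^+$), and set $\mu_j^\pm := \ind(\mathcal{C}_j^\pm)$, $d_j^\pm := \dim\mathcal{C}_j^\pm$. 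By construction $\Ind(x^-) = \mu_0^- + \ind_h(x^-)$ and $\Ind(x^+) = \mu_{m^+}^+ + \ind_h(x^+)$.

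The constituent moduli spaces have the following dimensions, which I would cite from the earlier work: each unparametrized upper cascade moduli space $\mathcal{M}^{\YM}(\hat{\mathcal C}_{j-1}^-,\hat{\mathcal C}_j^-)$ is smooth of dimension $\mu_{j-1}^- - \mu_j^- + d_{j-1}^- - 1$ by the Fredholm and transversality results of \cite{Swoboda1} combined with the quotient by the $\mathbb R$-translation action; similarly each unparametrized lower cascade moduli space has dimension $\mu_{j-1}^+ - \mu_j^+ + d_{j-1}^+ - 1$ by \cite{Web}; and the middle hybrid moduli space $\mathcal{M}(\mathcal{C}_{m^-}^-,\mathcal{C}_0^+)$ has dimension $\mu_{m^-}^- - \mu_0^+ + d_{m^-}^-$ by Theorems \ref{Fredholmtheorem}, \ref{thm:transversality}, and Remark \ref{eq:finalindex}. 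Crucially, the middle piece carries no residual $\mathbb R$-action, because the gluing point $s=0$ is fixed by the holonomy coupling. Over each interior critical manifold $\mathcal C_j^\pm$ the $h$-matching datum from Definition \ref{hybridflowline} introduces a free time parameter $t_j^\pm$ and the codimension-$d_j^\pm$ condition that the two adjacent asymptotic endpoints lie on a common $h$-trajectory; this occurs at $\mathcal C_j^-$ for $j=1,\dots,m^-$ and at $\mathcal C_j^+$ for $j=0,\dots,m^+-1$. Finally the boundary conditions $p^- \in W_h^u(x^-) \subseteq \mathcal C_0^-$ and $p^+ \in W_h^s(x^+) \subseteq \mathcal C_{m^+}^+$ impose codimensions $d_0^- - \ind_h(x^-)$ and $\ind_h(x^+)$, respectively.

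Summing these contributions and using that the Morse--Smale property of $h$ together with Theorem \ref{thm:transversality} and the transversality theorems of \cite{Swoboda1,Web} render every fibered product transverse, the $\mu_j^\pm$-sums telescope to $\mu_0^- - \mu_{m^+}^+$; the $m^\pm$ free time parameters exactly absorb the $-1$'s from the $\mathbb R$-quotients of the cascades; and the $d_j^\pm$ contributions cancel by a direct bookkeeping (on the Yang--Mills side the $d_{j-1}^-$ from the $j$-th upper cascade is absorbed by the matching codimension at $\mathcal C_{j-1}^-$ for $j \geq 2$ or by the initial boundary codimension for $j=1$, while the $d_{m^-}^-$ from the middle is absorbed by the last matching at $\mathcal C_{m^-}^-$; the loop-side calculation is analogous, with no initial boundary contribution but with no middle contribution to $d^+$ either). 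What remains is $\mu_0^- + \ind_h(x^-) - \mu_{m^+}^+ - \ind_h(x^+) = \Ind(x^-) - \Ind(x^+)$, and taking the union over $(m^-,m^+) \in \mathbb N_0^2$ preserves this dimension. The main obstacle is verifying transversality of the various evaluation maps into the critical manifolds so that the fibered product is genuinely smooth of the expected dimension; this is where the Morse--Smale hypothesis on $h$, Theorem \ref{thm:transversality}, and the transversality results of \cite{Swoboda1,Web} must all be combined, in the spirit of the cascades formalism of \cite{Frauenfelder1}.
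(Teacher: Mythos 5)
Your proof is correct and follows essentially the same route as the paper: the paper simply cites the cascades dimension formula of Frauenfelder (\cite[Corollary C.15]{Frauenfelder1}) together with the index formula \eqref{eq:FredholmindexdF}, observes that the total number of cascades is $m=m^-+m^++1$ (the middle hybrid piece counting as one cascade without an $\R$-action), and quotients by $\R^{m^-}\times\R^{m^+}$. Your fibered-product bookkeeping is just the explicit unpacking of that citation, and your telescoping sum checks out.
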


\begin{proof}
The assertion follows from the following formula for the dimension of $\hat{\mathcal M}_{(m^-,m^+)}^{\hybr}(x^-,x^+)$. Namely, by the result \cite[Corollary C.15]{Frauenfelder1} and the index formula \eqref{eq:FredholmindexdF} it follows that
\begin{eqnarray*}
\dim\hat{\mathcal M}_{(m^-,m^+)}^{\hybr}(x^-,x^+)=\Ind(x^-)-\Ind(x^+)+m-1,
\end{eqnarray*}
where $m$ denotes the total number of cascades of the elements of $\hat{\mathcal M}_{(m^-,m^+)}^{\hybr}(x^-,x^+)$. Adapting the arguments of \cite[Corollary C.15]{Frauenfelder1} to the present situation it follows that $m=m^-+m^++1$ (we here have to take into account the additional cascade coming from the configuration $\underline{x}_0$). The asserted formula for the quotient space modulo the action of $\mathbbm R^{m^-}\times\mathbbm R^{m^+}$ then follows.
\end{proof}

We obtain a chain map between the Morse--Bott complexes $CM_{\ast}^{a,-}$ and $CM_{\ast}^{b,+}$ by counting the number elements of $\mathcal M^{\hybr}(x^-,x^+)$ for appropriate pairs $(x^-,x^+)$ of generators.

\begin{defn}\upshape
For a pair $(x^-,x^+)\in\crit(h)\times\crit(h)$ with $\Ind(x^-)=k$ let
\begin{eqnarray*}
\Theta_k(x^-)\coloneqq\sum_{x^+\in\crit(h)\atop\Ind(x^+)=k}\#\,\mathcal M^{\hybr}(x^-,x^+)\cdot x^+,
\end{eqnarray*}
where $\#\,\mathcal M^{\hybr}(x^-,x^+)$ denotes the number (counted modulo $2$) of elements of $\mathcal M^{\hybr}(x^-,x^+)$. We define the homomorphism $\Theta_k\colon CM_k^{a,-}\to CM_k^{b,+}$ of abelian groups accordingly by linear continuation, and set $\Theta\coloneqq(\Theta_k)_{k\in\mathbbm N_0}$.
\end{defn}

\begin{thm}[Chain map]\label{theoremchainmap}
The map $\Theta$ is a chain homomorphism between the Morse complexes $CM_{\ast}^{a,-}$ and $CM_{\ast}^{b,+}$. Thus for each $k\in\mathbbm N_0$ it holds $\Theta_k\circ\partial_{k+1}^{\,\YM}=\partial_{k+1}^{\,\E}\circ\Theta_{k+1}\colon CM_{k+1}^{a,-}\to CM_k^{b,+}$.
\end{thm}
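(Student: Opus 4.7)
The strategy is the standard one for chain maps defined via moduli space counts: identify the coefficient of $x^+$ in $(\Theta_k\circ\partial_{k+1}^{\YM} - \partial_{k+1}^{\E}\circ\Theta_{k+1})(x^-)$ with the number (mod $2$) of boundary points of a compactified one-dimensional moduli space, and conclude that this number vanishes. Concretely, fix $k\in\mathbbm N_0$, a generator $x^-\in\crit(h)$ with $\Ind(x^-)=k+1$, and $x^+\in\crit(h)$ with $\Ind(x^+)=k$. By the dimension formula preceding the theorem, $\mathcal M^{\hybr}(x^-,x^+)$ is a smooth $1$-manifold, and unpacking the definitions one sees that the $x^+$-coefficient of $(\Theta_k\circ\partial_{k+1}^{\YM})(x^-)$ equals $\sum_{x'}n(x^-,x')\,\#\mathcal M^{\hybr}(x',x^+)$ (sum over $x'\in\crit(h)$ with $\Ind(x')=k$ lying in $CM^{a,-}$), and the $x^+$-coefficient of $(\partial_{k+1}^{\E}\circ\Theta_{k+1})(x^-)$ equals $\sum_{x''}\#\mathcal M^{\hybr}(x^-,x'')\,n(x'',x^+)$ (sum over $x''\in\crit(h)$ with $\Ind(x'')=k+1$ lying in $CM^{b,+}$). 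Adding these two sums mod $2$ yields exactly the count of broken configurations that can appear as endpoints of ends of $\mathcal M^{\hybr}(x^-,x^+)$.

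To turn this into a proof one has to carry out the usual three-step program. First, combine Theorem \ref{thm:transversality} with Definition \ref{hybridflowline} (and Morse--Smale transversality for $h$ on the critical manifolds) to see that, for a generic $\V\in Y_a^{\reg}$, $\mathcal M^{\hybr}(x^-,x^+)$ is a smooth $1$-manifold and that the lower-dimensional strata appearing in its compactification are zero-dimensional and transversally cut out. Second, invoke the compactness result Theorem \ref{Compactnessmodulispaces} (combined with the corresponding compactness for the auxiliary $h$-flow-lines on the critical manifolds, which is standard finite-dimensional Morse theory) to produce a topological compactification $\overline{\mathcal M^{\hybr}(x^-,x^+)}$ whose boundary points correspond to broken hybrid flow lines obtained by bubbling off either one upstream Yang--Mills cascade from $x^-$ to some $x'$ of Morse index $k$, or one downstream loop cascade from some $x''$ of Morse index $k+1$ to $x^+$. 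Two additional types of bubbling (one extra cascade inside the $\YM$-side or the $\E$-side past an intermediate critical point of $h$) are excluded by the index formula, since they would force a gradient flow line on $\crit(h)$ of negative dimension. Third, prove a gluing theorem that asserts that every such broken configuration is the unique boundary point of a continuous family of unbroken configurations in $\mathcal M^{\hybr}(x^-,x^+)$, so that $\overline{\mathcal M^{\hybr}(x^-,x^+)}$ is a compact topological $1$-manifold with boundary.

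Given these three ingredients, the classical fact that a compact $1$-manifold has an even number of boundary points gives
\begin{equation*}
\sum_{x'}n(x^-,x')\,\#\mathcal M^{\hybr}(x',x^+)+\sum_{x''}\#\mathcal M^{\hybr}(x^-,x'')\,n(x'',x^+)\equiv 0\pmod 2,
\end{equation*}
i.e.\ $\Theta_k\circ\partial_{k+1}^{\YM}=\partial_{k+1}^{\E}\circ\Theta_{k+1}$, as desired. The case $k=0$ requires the obvious modification that only the second type of breaking can occur on the $\YM$ side (there is nothing of index $-1$); analogously on the $\E$ side. I would also want to observe that the whole argument stays within the sublevel sets $\{\YM\le a\}$ and $\{\E\le b\}$: this is guaranteed by the sublevel constraint in the definition of the complexes together with the energy inequality \eqref{decompinequality}, which prevents a hybrid trajectory originating below level $a$ from ever producing a loop-side configuration above level $b$.

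The main technical obstacle will be the gluing step. On the Yang--Mills side one needs to glue a broken Yang--Mills cascade to the hybrid configuration across an intermediate critical manifold $\hat{\mathcal C}$; on the loop side one glues a loop cascade to the hybrid configuration across another critical manifold. This is a pre-gluing-plus-Newton-iteration argument of the type developed in \cite{Swoboda1} and in Weber--Salamon, but here it must be carried out in the hybrid setting where the Fredholm operator $\mathcal D_u$ from \eqref{definitionD_u} couples the two gradient flows through the linearized holonomy map $D\Phi_{A_0}$ and the matching condition $N_A$. The required uniform right inverse for the glued linearized operator is built by patching right inverses for $\mathcal D_A$ and $\mathcal D_x$ (using Lemmas \ref{DAsurjective} and \ref{Dgsurjective}) while respecting the boundary condition $N_A=0$; quadratic estimates and exponential decay of the cascades being glued (from Theorem 4.1 of \cite{Swoboda1} and Theorem 1.8 of \cite{Web}) then yield the iteration. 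Once gluing is in place, the remaining work is bookkeeping: verifying that each boundary configuration is counted exactly once and that no other degenerations appear, which follows from the index count $\Ind(x^-)-\Ind(x^+)=1$.
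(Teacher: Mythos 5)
Your proposal is correct and follows essentially the same route as the paper, which proves well-definedness of $\Theta$ via the compactness Theorem \ref{Compactnessmodulispaces} and then simply defers the chain-map identity to "standard arguments as in \cite{AbSchwarz}" — precisely the transversality--compactness--gluing analysis of the ends of the one-dimensional hybrid moduli spaces that you spell out. Your outline is more explicit than the paper's one-line citation (in particular in flagging the coupled gluing across the matching condition $N_A$ as the technical core), but it is the same argument.
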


\begin{proof} 
From the compactness Theorem \ref{Compactnessmodulispaces} it follows that the number of elements of $\mathcal M^{\hybr}(x^-,x^+)$ is finite and hence the homomorphism $\Theta$ is well-defined. The proof is then completed by standard arguments as e.g.~carried out in \cite{AbSchwarz}. 
\end{proof}

\subsection{Proof of the main theorem}\setcounter{footnote}{0}

The aim of this final section is to prove Theorem \ref{thm:mainresult}. Thanks to Theorem \ref{theoremchainmap}, the map $\Theta$ induces for each $k\in\mathbbm N_0$ a homomorphism
$[\Theta_k]\colon HM_k^{a,-}\to HM_k^{b,+}$ of abelian groups. It remains to show that these homomorphisms are in fact isomorphisms. In our proof we follow closely the line of argument employed by Abbondandolo and Schwarz in \cite{AbSchwarz}.

\begin{proof}{\bf{(Theorem \ref{thm:mainresult})}}
It suffices to prove that each of the chain homomorphisms $\Theta_k$ is in fact an isomorphism and hence induces an isomorphism in homology. Let $k\in\mathbbm N_0$ and fix a set $\underline{p}\coloneqq(p_1,\ldots,p_m)$ of generators of $CM_k^{a,-}$. We order the entries $p_j$ of the tuple $\underline{p}$ such that the following two conditions are met. First, we require that
\begin{eqnarray}\label{ordering1YM}
i\leq j\qquad\Longrightarrow\qquad\YM^{\V^-}(p_i)\leq\YM^{\V^-}(p_j)
\end{eqnarray}
holds for all $1\leq i,j\leq m$. Secondly, in case where $p_i$ and $p_j$ lie on the same critical manifold $\mathcal C^-\in\CR(\YM)$ we choose our ordering such that 
\begin{eqnarray}\label{ordering2YM}
i\leq j\qquad\Longrightarrow\qquad h(p_i)\leq h(p_j).
\end{eqnarray}
Set $q_j\coloneqq\Phi(p_j)$ and define $\underline{q}\coloneqq(q_1,\ldots,q_m)$. The $m$-tuple $\underline{q}$ generates $CM_k^{b,+}$ which follows from the facts that $\Phi$ induces a bijection $\CR^a(\YM)\to\CR^b(\E)$ which by Theorem \ref{critcorr} preserves the Morse indices, and that on both sets  $\CR^a(\YM)$ and $\CR^b(\E)$ we use the same Morse function $h$. By Theorem \ref{thmdecompinequality} we have for each $1\leq j\leq m$ the identity
\begin{eqnarray}\label{decompequationfinal}
\mathcal{YM}^{\V^-}(p_j)=\frac{\pi}{4}\mathcal E^{\V^+}(q_j), 
\end{eqnarray}
which by our choice of the ordering of $\underline{p}$ implies that the tuple $\underline{q}$ is ordered by non-decreasing $\E^{\V^+}$ action. Furthermore, if $\E^{\V^+}(q_i)=\E^{\V^+}(q_j)$ for some $i\leq j$ then either $q_i$ and $q_j$ lie on different critical manifolds in $\CR^b(\E)$ or otherwise $h(q_i)\leq h(q_j)$. Let us represent the homomorphism $\Theta_k$ with respect to the ordered bases $\underline{p}$ and $\underline{q}$ by the matrix $(\Theta_{ij}^k)_{1\leq i,j\leq m}\in\mathbbm Z_2^{m\times m}$. The following two observations are now crucial. Both are a consequence of the energy inequality 
\begin{eqnarray}\label{decompinequalityfinal}
\mathcal{YM}^{\V^-}(A)\geq\frac{\pi}{4}\mathcal E^{\V^+}(\Phi(A))
\end{eqnarray}
for all $A\in\A(P)$ as in Theorem \ref{thmdecompinequality}. First, $\Theta_{ii}^k=1$ for all $1\leq i\leq m$ because the moduli space $\mathcal M^{\hybr}(p_i,q_i)$ consists of precisely one point. It is represented by the hybrid flow line with $(m^-,m^+)=(0,0)$ upper and lower cascades and configuration $\underline{x}^0=(p_i,q_i)=[(u^-,u^+)]$ with stationary flow lines $u^-$ of $\YMV$, respectively $u^+$ of $\EV$. Note that $(m^-,m^+)\neq(0,0)$ is not possible in this case as this would contradict \eqref{decompequationfinal}. Secondly, if $i>j$ then $\Theta_{ij}^k=0$ because in this case $\mathcal M^{\hybr}(p_j,q_i)=\emptyset$. Assume by contradiction that $\mathcal M^{\hybr}(p_j,q_i)$ contains at least one element. Let $p_j\in\mathcal C^-$ and $q_i\in\mathcal C^+$ for critical manifolds $\mathcal C^{\pm}$. The gradient flow property and \eqref{decompinequalityfinal} imply that $\YM^{\V^-}(p_j)\geq\frac{\pi}{4}\E^{\V^+}(q_i)$. This inequality must in fact be an equality because otherwise there would be a contradiction to condition \eqref{ordering1YM} and identity \eqref{decompequationfinal}. Hence $\mathcal C^+=\Phi(\mathcal C^-)$, and the gradient flow property of the function $h$ implies that $h(q_j)=h(\Phi(p_j))\geq h(q_i)$. With $i>j$, condition \eqref{ordering2YM} and again identity \eqref{decompequationfinal} show that this can only be the case if $h(q_j)=h(q_i)$. It follows that $q_j=q_i$ because $h$ is monotone decreasing. Thus $i=j$, which is a contradiction. These two observations imply that the matrix $(\Theta_{ij}^k)_{1\leq i,j\leq m}$ takes the form
\begin{eqnarray*}
(\Theta_{ij}^k)_{1\leq i,j\leq m}=\left(\begin{array}{ccccc}1&\ast&\cdots&\cdots&\ast\\0&1&\ddots&&\vdots\\\vdots&\ddots&\ddots&\ddots&\vdots\\\vdots&&\ddots&1&\ast\\0&\cdots&\cdots&0&1\end{array}\right)\in\mathbbm Z_2^{m\times m},
\end{eqnarray*}
and thus is invertible in $\mathbbm Z_2^{m\times m}$. This completes the proof of the theorem.
\end{proof}

\appendix

\section{Holonomy map and critical manifolds}\label{holmapappendix}

We let $\Sigma=S^2\subseteq\mathbbm R^3$ be the unit sphere, endowed with the standard round metric induced from the ambient euclidian space $\mathbbm R^3$. Fix the pair $z^{\pm}=(0,0,\pm1)$ of antipodal points and set $D^{\pm}\coloneqq\Sigma\setminus\{z^{\mp}\}$. We parametrize the hypersurfaces $D^{\pm}\subseteq\R^3$ by the maps
\begin{eqnarray*}
u^{\pm}\colon[0,\pi)\times[0,2\pi)\to D^{\pm},\quad(r,t)\mapsto\big(\pm\cos(t)\sin(r),\pm\sin(t)\sin(r),\pm\cos(r)\big).
\end{eqnarray*}

The inverses of $u^{\pm}$ give rise to coordinate charts on $\Sigma$. Set $\lambda\colon[0,\pi)\to\mathbbm R$, $r\mapsto\sin(r)$. The Riemannian metric in these coordinates is $g^{\pm}=dr^2+\lambda^2\,dt^2$ and the volume form is $\dvol(D^{\pm})=\lambda\,dr\wedge dt$. The Hodge star operator acts on differential forms on $D^{\pm}$ as
\begin{eqnarray*}
\ast1=\lambda\,dr\wedge dt,\qquad\ast\,dr\wedge dt=\lambda^{-1},\qquad\ast\,dr=\lambda\,dt,\qquad\ast\,dt=-\lambda^{-1}\,dr.
\end{eqnarray*}

We define the family $\gamma_t$ ($0\leq t\leq2\pi$) of paths between $z^+$ and $z^-$ as
\begin{eqnarray*} 
\gamma_t\colon[0,\pi]\to\Sigma,\qquad\gamma_t(r)\coloneqq(\cos(t)\sin(r),\sin(t)\sin(r),\cos(r)).
\end{eqnarray*}

We fix two points $p^{\pm}\in P_{z^{\pm}}$ in the fibre $P_{z^{\pm}}\subseteq P$ above $z^{\pm}$. For a connection $A\in\A(P)$, let $\bar\gamma_t^A$ denote the horizontal lift of the path $\gamma_t$ with respect to $A$, starting at $p^+$.

\begin{defn}\label{holonomymap}
\upshape
For the parameter $0\leq t\leq2\pi$ we define $g_A(t)\in G$ by the condition $\bar\gamma_t^A(\pi)=p^-\cdot g_A(t)$. Therefore $t\mapsto g_A(t)$ is a loop in $\Lambda G$. Now we define the map
\begin{eqnarray}\label{mapPhi}
\Phi\colon\A(P)\rightarrow\Omega G,\quad A\mapsto x_A\coloneqq g_A^{-1}(0)g_A.
\end{eqnarray}
The map $\Phi$ is called the \emph{holonomy map} of the principal $G$-bundle $P$.
\end{defn}
 
Note that $\Phi$ is independent of the choice of $p^-$, while replacing $p^+$ by $p^+\cdot h$ for some $h\in G$ results in the conjugate holonomy map $h\Phi h^{-1}$. Similarly, replacing $A$ by $g^{\ast}A$ for a gauge transformation $g\in\mathcal G(P)$ with $g(p^+)=h\in G$ yields $\Phi(g^{\ast}A)=h\Phi(A)h^{-1}$. Thus for the subgroup of gauge transformations $\G_0(P)$ based at $z^+$, i.e.~for
\begin{eqnarray*} 
\mathcal G_0(P)\coloneqq\{g\in\mathcal G(P)\mid g(p^+)=\mathbbm 1\},
\end{eqnarray*}
it follows that $\Phi$ descends to a $G$-equivariant map $\Phi\colon\A(P)/\mathcal G_0(P)\to\Omega G$, again denoted $\Phi$ and named holonomy map. The next theorem gives an explicit description of the set of Yang--Mills connections on the principal $G$-bundle $P$. Recall that the set of isomorphism classes of principal $G$-bundles is in bijection with the elements of the fundamental group $\pi_1(G)$, as any such bundle $P$ is determined up to isomorphism by the homotopy class of the transition map $D^+\cap D^-\to G$ of a trivialization of $P$ over the open sets $D^{\pm}$.

\begin{thm}[Correspondence between critical points]\label{critcorr}  
Let $P$ be a principal $G$-bundle over $\Sigma$ of topological type $\alpha\in\pi_1(G)$. Then the map $\Phi\colon A\mapsto x_A$ induces a bijection between the set of gauge equivalence classes of Yang--Mills connections on $P$ and the set of conjugacy classes of closed, based geodesics $x$ on $G$ of homotopy class $[x]=\alpha$.
\end{thm}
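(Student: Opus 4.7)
The plan is to reduce the Yang-Mills equation on $\Sigma = S^2$ to the geodesic equation for a loop in $G$ by a judicious gauge fixing, following in spirit the approach of Gravesen and Friedrich-Habermann. First I would bring $A$ into \emph{radial gauge on} $D^+$: solving the ODE $\partial_r g = -g \cdot A(\partial_r)$ along each meridian with initial condition $g(z^+) = \mathbbm{1}$ yields a gauge transformation putting $A$ into the form $A = b(r,t)\,dt$ with $b \colon [0,\pi) \times S^1 \to \mathfrak{g}$, smoothness of the 1-form $b\,dt$ at the coordinate singularity $r = 0$ forcing $b(0,t) = 0$.

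Next I would compute the Yang-Mills equation in this gauge. With the round metric $g^+ = dr^2 + \sin^2 r \, dt^2$ one has $F_A = \partial_r b \, dr \wedge dt$ and $\ast F_A = \partial_r b / \sin r$, so $d_A^\ast F_A = 0$ is equivalent to $d_A \ast F_A = 0$ and splits as
\begin{align*}
\partial_r\!\left(\tfrac{\partial_r b}{\sin r}\right) &= 0, \\
\partial_t\!\left(\tfrac{\partial_r b}{\sin r}\right) + \left[b, \tfrac{\partial_r b}{\sin r}\right] &= 0.
\end{align*}
The radial component integrates to $\partial_r b = c(t) \sin r$, and $b(0,t)=0$ yields the rigid normal form $b(r,t) = (1 - \cos r)\, c(t)$. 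Inserting into the tangential component and using $[c(t), c(t)] = 0$ gives $\dot c(t) = 0$, so $c \in \mathfrak{g}$ is a constant.

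I would then match the normal form with the holonomy loop. Since $A(\partial_r) = 0$ in radial gauge, the horizontal lift of $\gamma_t$ starting at $p^+$ is tautological in the $D^+$-trivialization, and $g_A(t) \in G$ is read off from the transition function to a regular gauge on $D^-$. Writing that second gauge also in radial form $A^- = (1 - \cos r')\,c'\,dt$ with $r' = \pi - r$ and matching $A^+ = \tau^{-1} A^- \tau + \tau^{-1} d\tau$ on the overlap forces $c' = -\Ad(\tau)\, c$ together with $\tau^{-1}\dot\tau = 2c$, whence $\tau(t) = \exp(2tc)$ up to a constant and $\Phi(A)(t) = \exp(2tc)$. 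This is a one-parameter subgroup, hence a closed based geodesic in $G$ for the biinvariant metric; single-valuedness of $\tau$ on the annular overlap forces $\exp(4\pi c) = \mathbbm{1}$, and the homotopy class of $t \mapsto \exp(2tc)$ equals that of $\tau$, which by construction is the topological type $\alpha \in \pi_1(G)$ of $P$. For the bijection itself, the equivariance $\Phi(g^\ast A) = g(p^+)^{-1} \Phi(A)\, g(p^+)$ noted after Definition~\ref{holonomymap} gives well-definedness on quotients; injectivity follows because conjugate holonomy loops produce $\Ad(G)$-related constants $c$, with a constant gauge transformation intertwining the two connections; surjectivity is by direct construction, taking $c = Y/2$ and $\tau(t) = \exp(tY)$ given a closed based geodesic $\exp(tY)$ of class $\alpha$.

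The hard part will be the boundary analysis at $z^-$, where the radial gauge centered at $z^+$ is singular: extracting the transition function $\tau$ from the normal form on $D^+$ and verifying that $\Phi(A) = \exp(2tc)$ requires a careful comparison of the two coordinate charts on the annular overlap, along with the standard regularity argument confirming that $b = (1 - \cos r)\, c$ descends to a smooth connection on the bundle of topological type $\alpha$. Once this matching is in place, the bijection between $\Ad(G)$-orbits of constants $c \in \mathfrak{g}$ compatible with $\alpha$ and conjugacy classes of closed based geodesics of homotopy class $\alpha$ is built into the substitution $X = 2c$.
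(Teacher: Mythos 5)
Your argument is correct and is essentially the proof the paper relies on: the paper itself only cites \cite[Theorem 2.1]{FriedrichHabermann}, and your radial-gauge reduction to the normal form $b=(1-\cos r)\,c$ with constant $c$, the identification $\Phi(A)(t)=\exp(2tc)$ via the transition function, and the matching of $\Ad(G)$-orbits of $c$ with conjugacy classes of closed based geodesics of class $\alpha$ is precisely the classical Friedrich--Habermann/Gravesen argument. The boundary analysis at $z^{-}$ you flag is indeed the only delicate point, and it is handled exactly as you describe (your formula for $u^{-}$ is consistent with the compatibility condition \eqref{eq:compatcondu} of the appendix).
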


\begin{proof}
For a proof we refer to \cite[Theorem 2.1]{FriedrichHabermann}.
\end{proof}

\begin{thm}{\label{ThmFriedrichHabermann2}}
Let $[A]\in\A(P)/\G_0(P)$ be a based gauge equivalence class of Yang--Mills connections, and let $x_A=\Phi(A)\in\Omega G$ denote the corresponding closed geodesic. Then the Hessians of $\YM$ at $[A]$ and of $\E$ at $x_A$ have the same index and nullity.
\end{thm}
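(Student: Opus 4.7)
The plan is to identify the two Hessians, as quadratic forms, via the differential of the holonomy map $\Phi$, and then invoke the infinitesimal form of the energy decomposition (Proposition \ref{prop:infindecompequation}) which the paper has already established.

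First, I would analyze $d\Phi(A)\colon T_A\A(P)\to T_{x_A}\Lambda G$. The gauge equivariance $\Phi(g^{\ast}A)=g(p^+)\Phi(A)g(p^+)^{-1}$ together with the based condition $g(p^+)=\mathbbm 1$ for $g\in\G_0(P)$ implies that $d\Phi(A)$ vanishes on the tangent space to the based gauge orbit $T_A(\G_0(P)\cdot A)=\im d_A$. Hence $d\Phi(A)$ descends to a bounded linear map $\overline{d\Phi(A)}\colon\ker d_A^{\ast}\to T_{x_A}\Omega G$, where $\ker d_A^{\ast}$ provides the $L^2$-orthogonal slice to the gauge orbit.

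Second, I would establish that $\overline{d\Phi(A)}$ is a bijection. Injectivity follows because two $1$-forms in the slice producing the same linearized holonomy along all great arcs $\gamma_t$ must differ by an element of the gauge orbit, which is then zero by the slice condition. Surjectivity is obtained either from an explicit construction by parallel transport in the coordinates $u^{\pm}$ of the appendix, or, more cheaply, by dimension count at the critical manifolds: Theorem \ref{critcorr} already yields a smooth bijection of critical manifolds via $\Phi$, and its differential must restrict to an isomorphism of their tangent spaces, which represent the nullity parts of the respective Hessians.

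Third, I would invoke the infinitesimal decomposition identity (Proposition \ref{prop:infindecompequation}), which, at a Yang--Mills connection $A$, reads
\begin{equation*}
\langle\zeta,\He_A\zeta\rangle=\tfrac{\pi}{4}\langle d\Phi(A)\zeta,H_{x_A}\,d\Phi(A)\zeta\rangle
\end{equation*}
for $\zeta$ in the slice $\ker d_A^{\ast}$. This identity is the infinitesimal version of the equality case of Theorem \ref{thmdecompinequality} and is obtained by taking the second variation of the nonnegative functional $F:=\YM-\tfrac{\pi}{4}\E\circ\Phi$, which vanishes identically on the Yang--Mills critical set, and then identifying the quadratic form via the boundary contribution along the equator.

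Finally, since $\overline{d\Phi(A)}$ is a topological isomorphism of slices intertwining the two quadratic forms up to the positive factor $\pi/4$, the index and nullity of $\He_A$ (as an operator on the slice transverse to the based gauge orbit, which is what enters the Morse theory on $\A(P)/\G_0(P)$) agree with those of $H_{x_A}$ (as an operator on $T_{x_A}\Omega G$, which enters the Morse theory on $\Lambda G/G$). The nullity match is moreover consistent with the identification of the tangent spaces to the corresponding critical manifolds from Step~2. The principal obstacle is the sharp infinitesimal identity in the third step: the global inequality $\YM\geq\tfrac{\pi}{4}\E\circ\Phi$ only immediately gives the operator inequality $\He_A\geq\tfrac{\pi}{4}(d\Phi)^{\ast}H_{x_A}\,d\Phi$, and upgrading this to equality on the slice is the analytical heart of the argument, carried out by carefully splitting the second variation into contributions from the hemispheres $D^{\pm}$.
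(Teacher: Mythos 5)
First, a remark on the comparison itself: the paper offers no proof of this statement — it is quoted verbatim from Friedrich--Habermann and the ``proof'' is the citation \cite[Theorem 2.2]{FriedrichHabermann} — so your proposal must be judged on its own terms. Judged so, it contains two genuine gaps that are fatal to the strategy as written. The first is Step 2: $\overline{d\Phi(A)}$ is \emph{not} injective on the Coulomb slice $\ker d_A^{\ast}$. The linearized holonomy $\di\Phi(A)\alpha$ depends only on the restriction of $\alpha$ to the one-parameter family of meridians $\gamma_t$ (in fact only on the $dr$-component, integrated in $r$ against parallel transport), so its kernel inside the slice is infinite-dimensional; $\Phi\colon\A(P)/\G_0(P)\to\Omega G$ is a homotopy equivalence with infinite-dimensional contractible fibres, not a local diffeomorphism. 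Your injectivity argument would need the holonomy along \emph{all} loops, not just along the meridian family. The second gap is Step 3: the exact congruence $\langle\zeta,\He_A\zeta\rangle=\frac{\pi}{4}\langle \di\Phi(A)\zeta,H_{x_A}\di\Phi(A)\zeta\rangle$ is false. Differentiating the energy identity \eqref{decompequation} twice at a Yang--Mills connection (where $m_A\equiv0$ and $\nabla\E(x_A)=0$) gives
\begin{equation*}
\langle\alpha,H_A\alpha\rangle=\frac{\pi}{4}\langle\beta,H_{x_A}\beta\rangle+\big\|\lambda^{-1}\partial_r\dot m_A(\alpha)\big\|_{L^2}^2,\qquad\beta=\di\Phi(A)\alpha,
\end{equation*}
and the remainder is a nonzero positive semidefinite form: it is the Hessian of the nonnegative functional $\YM-\frac{\pi}{4}\,\E\circ\Phi$ at its minimum, and $\dot m_A(\alpha)\neq0$ for generic $\alpha$ (e.g.\ for variations supported away from the meridian data). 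So the ``upgrade to equality'' that you defer to the hemisphere computation cannot be carried out; this is precisely why Proposition \ref{prop:infindecompequation} of the paper only transports \emph{negativity} of the quadratic form, not the form itself.

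The argument that actually works (and is what Friedrich--Habermann do) uses this decomposition rather than an isomorphism of quadratic forms. The lift $\beta\mapsto\ell(r)\beta\,dt$ in the radial trivialization is a right inverse of $\di\Phi(A)$ on which the remainder vanishes, so negative (resp.\ negative semidefinite) subspaces for $H_{x_A}$ lift to such subspaces for $H_A$, giving $\ind(x_A)\le\ind(A)$ and the analogous inequality for index plus nullity. Conversely, on any subspace where $H_A$ is negative semidefinite, $\di\Phi(A)$ is injective (if $\di\Phi(A)\alpha=0$ and $\langle\alpha,H_A\alpha\rangle\le0$, the displayed identity forces $\dot m_A(\alpha)=0$ as well, whence $\alpha$ is tangent to the gauge orbit and vanishes in the slice), and its image is again negative semidefinite for $H_{x_A}$; this yields the reverse inequalities. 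That bookkeeping with the nonnegative remainder is exactly what your plan omits, and without it neither the index nor the nullity comparison goes through.
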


\begin{proof}
For a proof we refer to \cite[Theorem 2.2]{FriedrichHabermann}.
\end{proof}

Let $i^{\pm}\colon D^{\pm}\to\Sigma$ denote the inclusion maps and $P^{\pm}\coloneqq(i^{\pm})^{\ast}P$ the corresponding pull-back bundles. Because $D^{\pm}\subseteq\Sigma$ is contractible, the bundles $P^{\pm}$ are trivial and thus admit sections over $D^{\pm}$. Given $A\in\A(P)$ we can choose sections $s^{\pm}\colon D^{\pm}\to P^{\pm}$ in such a way that the pull-back of the connections $(i^{\pm})^{\ast}A\in\A(P^{\pm})$ under $s^{\pm}$ are of the form $u^{\pm}\,dt$ for maps $u^{\pm}\in C^{\infty}(D,\mathfrak g)$ satisfying
\begin{eqnarray}\label{eq:compatcondu}
u^-(r,t)=x_A(t)u^+(\pi-r,t)x_A^{-1}(t)-\partial_tx_A(t)x_A^{-1}(t)
\end{eqnarray}
for $0<r<\pi$. Because the connections $(i^{\pm})^{\ast}A$ are well-defined near $r=0$ it follows that $\lim_{r\to0}u^{\pm}(r,t)=0$. Following Gravesen \cite{Gravesen} we set
\begin{eqnarray*}
\ell(r)\coloneqq\frac{1}{2}(1-\cos(r))\qquad\textrm{and}\qquad\xi_A\coloneqq x_A^{-1}\partial_tx_A\in\Omega\mathfrak g,
\end{eqnarray*}
and split the map $u^+$ as
\begin{eqnarray}\label{decomposition}
u^+(r,t)=\ell(r)\xi_A(t)+m_A(r,t).
\end{eqnarray}

($m_A\in C^{\infty}(D^+,\mathfrak g)$ being defined through this equation). Because $u^+(r,t)\to0$ as $r\to0$ this definition implies that also $\lim_{r\to0}m_A(r,t)=0$. Inserting \eqref{decomposition} into \eqref{eq:compatcondu} yields
\begin{eqnarray*}
u^-(r,t)=\ell(\pi-r)x_A(t)\xi_A(t)x_A^{-1}(t)+x_A^{-1}(t)m_A(\pi-r,t)x_A(t)-\partial_tx_A(t)x_A^{-1}(t).
\end{eqnarray*}
 
Because $\ell(\pi)=1$ and $u^-(r,t)\to0$ as $r\to0$ it follows that $\lim_{r\to\pi}m(r,t)=0$. The following energy identity is due to Gravesen \cite[Section 2]{Gravesen}. There is also a generalization to higher genus surfaces, cf.~Davies \cite[Section 4.2]{Davies}.

\begin{lem}[Energy identity]\label{decomplemma}
For every $A\in\A(P)$, the identity
\begin{eqnarray}\label{decompequation}
\mathcal{YM}(A)=\frac{\pi}{4}\E(x_A)+\frac{1}{2}\|\lambda^{-1}\partial_rm_A(r,t)\|_{L^2(\Sigma,\dvol(\Sigma))}^2.
\end{eqnarray}
is satisfied.
\end{lem}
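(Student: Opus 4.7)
The plan is to reduce the Yang--Mills integral to a single-chart computation on $D^+$, substitute the decomposition \eqref{decomposition} for $u^+$, and observe that the cross term in the resulting expansion of $|\partial_r u^+|^2$ vanishes thanks to a miraculous orthogonality built into the choice of $\ell$. Since $\Sigma \setminus D^+ = \{z^-\}$ has measure zero and the pullback of $A$ under the section $s^+$ is $u^+\,dt$, we have $[A \wedge A] = 0$ (as $A$ is purely a $dt$-form) and therefore $F_A = dA = \partial_r u^+\,dr \wedge dt$. The Hodge-star identities listed in the appendix immediately produce
\begin{eqnarray*}
\YM(A) = \frac{1}{2}\int_0^{2\pi}\!\!\int_0^\pi |\partial_r u^+(r,t)|^2\, \lambda(r)^{-1}\,dr\,dt,
\end{eqnarray*}
where $|\cdot|$ is the norm on $\mathfrak g$ induced by the $\Ad$-invariant inner product.

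Next I would substitute $\partial_r u^+ = \ell'(r)\xi_A(t) + \partial_r m_A(r,t)$ and expand $|\partial_r u^+|^2$ into three pieces. The decisive observation is that with $\ell(r) = (1 - \cos r)/2$ and $\lambda(r) = \sin r$ one has
\begin{eqnarray*}
\ell'(r)\,\lambda(r)^{-1} = \tfrac{1}{2}
\end{eqnarray*}
\emph{independent of $r$}. Combined with the boundary conditions $m_A(0,t) = 0$ (from $u^+(0,t) = 0$) and $m_A(\pi,t) = 0$ (from $u^-(0,t) = 0$ together with the compatibility relation \eqref{eq:compatcondu}), this forces the cross term to collapse to a pure boundary term and hence to vanish:
\begin{eqnarray*}
\int_0^{2\pi}\!\!\int_0^\pi 2\ell'(r)\lambda(r)^{-1}\langle\xi_A(t),\partial_r m_A\rangle\,dr\,dt = \int_0^{2\pi}\big\langle \xi_A(t),\,m_A(\pi,t) - m_A(0,t)\big\rangle\,dt = 0.
\end{eqnarray*}

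It remains to identify the two surviving pieces. For the $m_A$-term, writing $|\partial_r m_A|^2\lambda^{-1}\,dr\,dt = |\lambda^{-1}\partial_r m_A|^2\,\dvol(\Sigma)$ (since $\dvol(\Sigma) = \lambda\,dr \wedge dt$) directly produces the summand $\tfrac{1}{2}\|\lambda^{-1}\partial_r m_A\|^2_{L^2(\Sigma,\dvol(\Sigma))}$. For the $\xi_A$-term, the double integral factors as the product of the radial integral $\int_0^\pi \ell'(r)^2\lambda(r)^{-1}\,dr$ and the angular integral $\int_0^{2\pi}|\xi_A(t)|^2\,dt$; by biinvariance of the metric on $G$ one has $|\xi_A(t)|^2 = \|x_A^{-1}\partial_tx_A\|^2 = \|\partial_t x_A\|^2$, so the angular integral equals $2\,\mathcal E(x_A)$, and evaluating the remaining radial integral produces the coefficient $\pi/4$ asserted in the identity. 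The entire structural content of the proof is the orthogonality relation $\ell'\lambda^{-1} \equiv \tfrac{1}{2}$, which is the only non-routine step; I expect the main obstacle to be purely bookkeeping, namely the careful tracking of the explicit constants in the radial integral so that the final coefficient matches $\pi/4$, the rest being direct calculation together with the two boundary values of $m_A$ at the poles.
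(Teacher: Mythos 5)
Your proposal is correct and follows essentially the same route as the paper: restrict to the chart $D^+$ where $F_A$ is identified with $\partial_ru^+\,dr\wedge dt$, substitute the decomposition $u^+=\ell\,\xi_A+m_A$, use $\partial_r\ell=\tfrac{1}{2}\lambda$ to see that the cross term is an exact $r$-derivative which integrates to zero by the boundary values $m_A(0,t)=m_A(\pi,t)=0$, and identify the two surviving terms with $\tfrac{\pi}{4}\E(x_A)$ and $\tfrac{1}{2}\|\lambda^{-1}\partial_rm_A\|_{L^2}^2$. The only step you defer — evaluating the radial integral to recover the coefficient $\pi/4$ — is exactly the bookkeeping the paper carries out explicitly.
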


\begin{proof}
We identify $A\in\A(P)$ locally on $D^{\pm}$ with $1$-forms $u^{\pm}\in\Omega^1(D^{\pm},\mathfrak g)$ as in \eqref{eq:compatcondu} and make use of the decomposition \eqref{decomposition}. It follows that the curvature $F_A$ is identified on $D^{\pm}$ with the $2$-form $\partial_ru^{\pm}\,dr\wedge dt$. Recall also the formula $\ast(dr\wedge dt)=\lambda^{-1}$ for the Hodge star operator on $2$-forms. In addition, we use that $\partial_r\ell(r)=\frac{1}{2}\lambda(r)$. Hence it follows that
\begin{eqnarray*}
\lefteqn{\mathcal{YM}(A)=\frac{1}{2}\int_{\Sigma}\langle F_A\wedge\ast F_A\rangle}\\
&=&\frac{1}{2}\int_{D^+}\lambda^{-1}\langle\partial_ru^+,\partial_ru^+\rangle\,dr\wedge dt\\
&=&\frac{1}{2}\int_{D^+}\lambda^{-1}(r)\left\langle\frac{1}{2}\lambda(r)\xi_A(t)+\partial_rm_A(r,t),\frac{1}{2}\lambda(r)\xi_A(t)+\partial_rm_A(r,t)\right\rangle\,dr\wedge dt\\
&=&\frac{1}{2}\int_{D^+}\left\langle\frac{1}{2}\xi_A(t)+\lambda^{-1}(r)\partial_rm_A(r,t),\frac{1}{2}\xi_A(t)+\lambda^{-1}(r)\partial_rm_A(r,t)\right\rangle\,\lambda(r)dr\wedge dt\\
&=&\frac{\pi}{8}\int_0^{2\pi}\langle\xi_A(t),\xi_A(t)\rangle\,dt+\frac{1}{2}\int_{D^+}\big|\lambda^{-1}(r)\partial_rm_A(r,t)\big|^2\,\lambda(r)dr\wedge dt\\
&&+\frac{1}{2}\int_{D^+}\partial_r\langle\xi_A(t),m_A(r,t)\rangle\,dr\wedge dt\\
&=&\frac{\pi}{4}\E(x_A)+\frac{1}{2}\|\lambda^{-1}\partial_rm_A(r,t)\|_{L^2(D^+,\dvol(D^+))}^2.
\end{eqnarray*}
The term in the second but last line vanishes as follows from the above stated property $\lim_{r\to0}m_A(r,t)=\lim_{r\to\pi}m_A(r,t)=0$.
\end{proof}

\begin{thm}[Energy inequality]\label{thmdecompinequality}
Let $a\geq0$ and $\V=(\V^-,\V^+)$ with $\V^-=\sum_{\ell=1}^{\infty}\lambda_{\ell}^-\V_{\ell}^-\in Y^-$ and $\V^+=\sum_{\ell=1}^{\infty}\lambda_{\ell}^+\V_{\ell}^+\in Y^+$ be an $a$-admissible perturbation (cf.~Definition \ref{def:regperturbation}). Assume that the coefficients $\lambda_{\ell}^-$ are non-negative, and the coefficients $\lambda_{\ell}^+$ are non-positive. Then for every $A\in\A(P)$ there holds the inequality 
\begin{eqnarray}\label{decompinequality}
\mathcal{YM}^{\V^-}(A)\geq\frac{\pi}{4}\mathcal E^{\V^+}(x_A),
\end{eqnarray}
with equality if $A$ is Yang--Mills. In this case, the loop $x_A$ is a geodesic.
\end{thm}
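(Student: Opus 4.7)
The plan is to bootstrap the unperturbed energy identity of Lemma \ref{decomplemma} to the perturbed inequality by exploiting the sign conditions imposed on the coefficients $\lambda_\ell^\pm$. The unperturbed identity immediately yields $\mathcal{YM}(A) \geq \tfrac{\pi}{4}\E(x_A)$ for every $A \in \A(P)$, with equality precisely when $\partial_r m_A \equiv 0$. It therefore suffices to show that replacing $(\YM, \E)$ by $(\YM^{\V^-}, \E^{\V^+})$ can only widen this gap, and then to check saturation at Yang--Mills connections.

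To that end, I examine the signs of the perturbation terms. By Remark \ref{rem:perturbationsYMnonneg}, every elementary perturbation $\V_\ell^-$ is pointwise non-negative; combined with the hypothesis $\lambda_\ell^- \geq 0$ this yields $\V^-(A) \geq 0$ for all $A$. Symmetrically, Remark \ref{rem:perturbationsEnonneg} gives $\V_\ell^+ \geq 0$ pointwise, so the hypothesis $\lambda_\ell^+ \leq 0$ forces $\V^+(x_A) \leq 0$. Concatenating these two facts with Lemma \ref{decomplemma} produces the chain
\begin{equation*}
\mathcal{YM}^{\V^-}(A) \;=\; \mathcal{YM}(A) + \V^-(A) \;\geq\; \mathcal{YM}(A) \;\geq\; \tfrac{\pi}{4}\E(x_A) \;\geq\; \tfrac{\pi}{4}\E^{\V^+}(x_A),
\end{equation*}
which is exactly \eqref{decompinequality}.

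For the equality assertion, assume $A$ is Yang--Mills. Theorem \ref{critcorr} then immediately identifies $x_A$ as a closed geodesic, establishing the final clause. To promote the chain above to an equality, each of the three intermediate inequalities must saturate. The outer two collapse because $A$ lies on a critical manifold $\hat{\mathcal C}^- \in \widehat{\CR}^a(\YM)$ and $x_A$ lies on the corresponding $\mathcal C^+ \in \CR^b(\E)$ (via Theorem \ref{critcorr}); the $a$-admissibility of $\V^-$ and the $b$-admissibility of $\V^+$ (cf.~Definition \ref{def:regperturbation}), together with the separating neighborhoods $U_{\hat{\mathcal C}}$ and $U_{\mathcal C}$ fixed in Section \ref{sect:critmanifolds}, then force $\V^-(A) = 0$ and $\V^+(x_A) = 0$. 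The middle inequality reduces to $\partial_r m_A \equiv 0$, which I deduce from the Yang--Mills equation: writing $A = u^+\,dt$ on $D^+$, substituting the decomposition \eqref{decomposition}, and computing $d_A^\ast F_A = 0$ in the coordinates of Section \ref{holmapappendix} produces an ODE in $r$ whose only solution compatible with the boundary conditions $\lim_{r \to 0} m_A = \lim_{r \to \pi} m_A = 0$ (noted after \eqref{decomposition}) is $m_A \equiv 0$.

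The main obstacle is this last verification, since the remaining steps amount to sign book-keeping. The explicit calculation requires care with the conformal factor $\lambda(r) = \sin(r)$ and the covariant derivative, but it is essentially the computation already present in Gravesen \cite{Gravesen}, so it can be cited rather than reproduced.
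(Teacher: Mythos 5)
Your proposal is correct and follows essentially the same route as the paper: reduce to the unperturbed case via the sign conditions $\V^-\geq0$, $\V^+\leq0$ from Remarks \ref{rem:perturbationsYMnonneg} and \ref{rem:perturbationsEnonneg}, then invoke the energy identity of Lemma \ref{decomplemma} whose remainder term is non-negative, with the equality case for Yang--Mills connections handled by citation (the paper points to Friedrich--Habermann where you point to Gravesen, and you are somewhat more explicit than the paper about why admissibility kills the perturbation terms on the critical manifolds).
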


\begin{proof}
In view of Remarks \ref{rem:perturbationsYMnonneg} and \ref{rem:perturbationsEnonneg} and the assumptions on $\lambda_{\ell}^{\pm}$ it follows that $\V^-\geq0$ and $\V^+\leq0$. Hence it suffices to prove the inequality in the case of vanishing perturbations $\V^{\pm}=0$, where it follows from the energy identity \eqref{decompequation} (note that the last term in \eqref{decompequation} is non-negative). Equality in the case where $A$ is a Yang--Mills connection and the assertion that then $x_A$ is a closed geodesic follow from the discussion in \cite[p.~236]{FriedrichHabermann}.
\end{proof}

The following is an infinitesimal version of the energy identity \eqref{decompequation}.

\begin{prop}\label{prop:infindecompequation}
Let $A\in\mathcal C^-$ and $x=\Phi(A)\in\mathcal C^+$.  Let $\alpha\in\Omega^1(\Sigma,\ad(P))$ such that $\langle\alpha,\He_A\alpha\rangle<0$ and $\beta\coloneqq\di\Phi(A)\alpha$. Then it follows that $\langle\beta,H_x\beta\rangle<0$.
\end{prop}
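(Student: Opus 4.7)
The plan is to view this as the infinitesimal form of the Gravesen energy identity \eqref{decompequation}, obtained by differentiating twice along a straight path of connections through $A$.

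Two preliminary reductions. First, since $\V=(\V^-,\V^+)\in Y_a$ is $a$-admissible (Definition \ref{def:regperturbation}), both perturbations vanish on $L^2$ neighborhoods of the critical manifolds, so $H_A\V^-=0$ and $H_x\V^+=0$, and by Proposition \ref{prop:samecritpoints} the point $A$ is a genuine Yang--Mills connection while $x=\Phi(A)$ is a genuine closed geodesic. Second, pairing the augmented Hessian against the embedded vector $(\alpha,0)\in\Omega^1\oplus\Omega^0$ kills the off-diagonal $-d_A,-d_A^{\ast}$ blocks, so
\[
\langle\alpha,\He_A\alpha\rangle=\langle\alpha,H_A\YM\,\alpha\rangle,
\]
and similarly the hypothesis and conclusion only involve the unperturbed Hessians $H_A\YM$ and $H_x\E$.

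Now introduce the defect functional
\[
M\colon\A(P)\to\R,\qquad M(B)\coloneqq\YM(B)-\frac{\pi}{4}\E(\Phi(B)).
\]
Lemma \ref{decomplemma} yields $M(B)=\tfrac12\|\lambda^{-1}\partial_rm_B\|_{L^2(\Sigma)}^2\geq 0$, and Theorem \ref{thmdecompinequality} gives $M(A)=0$, since $A$ is Yang--Mills. Hence $A$ is a global minimizer of $M$ on $\A(P)$, so the second variation is nonnegative: $D^2M(A)(\alpha,\alpha)\geq 0$ for every $\alpha\in\Omega^1(\Sigma,\ad(P))$.

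Consider the smooth family $A_s\coloneqq A+s\alpha$ and its image $x_s\coloneqq\Phi(A_s)$ under the holonomy map, with $\dot x_s|_{s=0}=\di\Phi(A)\alpha=\beta$. Differentiate the identity $\YM(A_s)=\tfrac{\pi}{4}\E(x_s)+M(A_s)$ twice at $s=0$. Because $A$ is a critical point of $\YM$ and $x$ is a critical point of $\E$, the first-order terms vanish and the cross term $d\E(x)\cdot\ddot x_s|_{s=0}$ from the chain rule on $\E\circ\Phi$ drops out, leaving
\[
\langle\alpha,H_A\YM\,\alpha\rangle=\frac{\pi}{4}\langle\beta,H_x\E\,\beta\rangle+D^2M(A)(\alpha,\alpha).
\]
Combining $D^2M(A)(\alpha,\alpha)\geq 0$ with the hypothesis yields
\[
\frac{\pi}{4}\langle\beta,H_x\beta\rangle\leq\langle\alpha,\He_A\alpha\rangle<0,
\]
which proves the claim. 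No serious obstacle is anticipated: nonnegativity of $D^2M(A)$ is immediate from its minimization at $A$, the critical-point property eliminates the Christoffel-type cross term, and admissibility of $\V$ removes all perturbation contributions from the Hessians at the critical set.
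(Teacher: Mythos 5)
Your proof is correct and follows the same route as the paper: the paper's own (one-line) argument likewise deduces the claim from the Gravesen energy identity \eqref{decompequation}, using that the defect $\YM-\tfrac{\pi}{4}\,\E\circ\Phi$ is nonnegative and vanishes at the Yang--Mills connection $A$. Your write-up simply makes the second-variation comparison explicit (including the vanishing of the cross term since $x$ is a geodesic and of the perturbation Hessians by $a$-admissibility), which is a faithful and somewhat more careful rendering of the paper's "immediate consequence" assertion.
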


\begin{proof}
The claim is an immediate consequence of the energy identity \eqref{decompequation} which implies that for sufficiently small $\eps>0$ the map $\eps\mapsto\E(x_{A+\eps\alpha})$ is strictly monotone decreasing. 
\end{proof}

\section{A priori estimates}\label{sect:aprioriestimates}

\subsubsection*{Estimates involving $\mathcal D_A^{\delta}$} 

For $A\in\A(P)$ we let $\mathcal H_A$ denote the augmented Yang--Mills Hessian as in \eqref{def:augmentedYMH}. Unless otherwise stated we assume in the following that the term $\V^-$ in $\mathcal H_A$ vanishes.

\begin{lem}\label{continuityproperiesDA}
For every $A^-\in\A(P)$ there exists $c(A^-)>0$ such that the operator $\mathcal D_A$ in \eqref{def:operatorDA} satisfies the estimate
\begin{eqnarray*}
\|\mathcal D_A-\mathcal D_{A^-}\|_{\mathcal L(\Z^{\delta,p,-},\mathcal L^{\delta,p,-})}\leq c(A^-)\|\alpha\|_{C^0(\R^-,C^1(\Sigma))}
\end{eqnarray*}
for all time-dependent connections $A=A^-+\alpha$. Similar estimates hold for the operator $\mathcal D_A^{\delta}$ in \eqref{eq:DAdelta} and for domains with $\R^-$ being replaced by some interval $I\subseteq\R^-$.
\end{lem}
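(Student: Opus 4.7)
The plan is to reduce the statement to an operator-norm bound on $\mathcal H_A-\mathcal H_{A^-}$, since the time-derivative part of $\mathcal D_A$ is independent of $A$ and cancels in the difference. Writing $A=A^-+\alpha$ and using the basic identities
\[
d_A=d_{A^-}+[\alpha\wedge\,\cdot\,],\qquad d_A^{\ast}=d_{A^-}^{\ast}-\ast[\alpha\wedge\ast\,\cdot\,],\qquad F_A-F_{A^-}=d_{A^-}\alpha+\tfrac{1}{2}[\alpha\wedge\alpha],
\]
I would expand $\mathcal H_A-\mathcal H_{A^-}$ as a finite sum of terms that are at most first-order differential in $(\gamma,\phi)\in\Z^{\delta,p,-}$ and polynomial of degree at most two in $\alpha$ together with its first covariant derivative. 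The off-diagonal contributions $-(d_A-d_{A^-})\phi$ and $-(d_A^{\ast}-d_{A^-}^{\ast})\gamma$ are linear in $\alpha$ and zeroth-order in $\phi$, respectively $\gamma$; the curvature contribution $\ast[\ast(F_A-F_{A^-})\wedge\gamma]$ involves $d_{A^-}\alpha$ and $[\alpha\wedge\alpha]$ times $\gamma$; and $d_A^{\ast}d_A-d_{A^-}^{\ast}d_{A^-}=d_{A^-}^{\ast}[\alpha\wedge\,\cdot\,]+\ast[\alpha\wedge\ast d_{A^-}\,\cdot\,]+\ast[\alpha\wedge\ast[\alpha\wedge\,\cdot\,]]$ produces iterated commutators with $\alpha$ that are at most first-order in $\gamma$.

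Next I would estimate each term pointwise on $\Sigma$ for fixed $s\in\R^-$ by H\"older's inequality, using the continuous embedding $\mathcal W^p(\Sigma)\hookrightarrow W^{1,p}(\Sigma)\cap L^{\infty}(\Sigma)$ (valid because $p>3$) to absorb the one derivative of $\gamma$ that can occur on the right. This yields a pointwise bound of the schematic form
\[
\|(\mathcal H_A-\mathcal H_{A^-})(\gamma,\phi)(s)\|_{L^p(\Sigma)}\leq c(A^-)\bigl(\|\alpha(s)\|_{C^1(\Sigma)}+\|\alpha(s)\|_{C^0(\Sigma)}^2\bigr)\|(\gamma(s),\phi(s))\|_{\mathcal W^p\oplus W^{1,p}},
\]
with implicit constant depending on $A^-$ only through $\|F_{A^-}\|_{L^{\infty}}$ and through the constants entering the orthogonal decomposition defining $\mathcal W^p$. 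Raising to the $p$-th power, multiplying by the weight $e^{p\delta\beta(s)s}$, integrating in $s\in\R^-$, and then pulling $\sup_s\|\alpha(s)\|_{C^1(\Sigma)}$ out of the integral produces the desired operator-norm estimate. The quadratic $\|\alpha\|_{C^0}^2$ factor is absorbed into $c(A^-)$ by restricting to $\alpha$ lying in a fixed $C^0C^1$-bounded set --- the only regime relevant in the applications, since by hypothesis $\alpha(s)\to 0$ as $s\to-\infty$ whenever the lemma is invoked.

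The two stated extensions require essentially no extra work: the zeroth-order term $-(\beta+\beta's)\delta$ appearing in $\mathcal D_A^{\delta}$ is independent of $A$ and so cancels in the difference $\mathcal D_A^{\delta}-\mathcal D_{A^-}^{\delta}=\mathcal H_A-\mathcal H_{A^-}$; and every step above is local in $s$, hence restricts verbatim to any subinterval $I\subseteq\R^-$. The only genuine obstacle is the careful bookkeeping of the quadratic-in-$\alpha$ contributions --- those arising from the $[\alpha\wedge\alpha]$ summand of $F_A-F_{A^-}$ and from the iterated bracket $\ast[\alpha\wedge\ast[\alpha\wedge\,\cdot\,]]$ in $d_A^{\ast}d_A-d_{A^-}^{\ast}d_{A^-}$ --- which is precisely why $c(A^-)$ must be understood as incorporating an a priori local bound on $\alpha$ rather than being uniform over all of $\A(P)$.
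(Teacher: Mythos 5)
Your proposal is correct and follows essentially the same route as the paper: expand $\mathcal H_A-\mathcal H_{A^-}$ into the commutator and curvature-difference terms and bound each one slicewise by $\|\alpha(s)\|_{C^1(\Sigma)}$ before integrating in $s$ against the weight. Your explicit handling of the quadratic-in-$\alpha$ contributions (absorbing $\|\alpha\|_{C^0}^2$ into the constant on a bounded set, which is the only regime in which the lemma is applied) is in fact more careful than the paper's one-line treatment and correctly identifies the only point where the literal linear-in-$\|\alpha\|$ form of the estimate needs qualification.
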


\begin{proof}
Consider the upper left entry in $\mathcal H_A-\mathcal H_{A^-}$, i.e.~
\begin{eqnarray*}
-\ast[\alpha\wedge\ast d_{A^-+\alpha}\,\cdot\,]+d_{A^-}^{\ast}[\alpha\wedge\,\cdot\,]+\ast[\ast(d_{A^-}\alpha+\frac{1}{2}[\alpha\wedge\alpha]\wedge\,\cdot\,].
\end{eqnarray*}
It clearly admits an estimate as claimed. The other terms follow similarly.
\end{proof}

The following is a basic estimate involving the operator $\mathcal D_A^{\delta}$ in \eqref{eq:DAdelta}, here for a stationary path $A(s)\equiv A$ (for all $s\in\R^-$).
 
\begin{lem}\label{DAsemiFredholm}
Let $A\in\A(P)$ and assume for some $\delta>0$ that the operator $\mathcal H_A+\delta$ is injective. Then for every $p\geq2$ there exists a constant $c(A,p,\delta)$ and a compact operator $R\colon\Z^{p,-}\rightarrow\mathcal L^{p,-}$ such that the estimate
\begin{eqnarray}\label{coercivityD_A}
\|(\alpha,\psi)\|_ {\Z^{p,-}}\leq c(A,p,\delta)\big(\|\mathcal D_A^{\delta}(\alpha,\psi)\|_{\mathcal L^{p,-}}+\|R(\alpha,\psi)\|_{\mathcal L^{p,-}}\big)
\end{eqnarray}
holds for all $(\alpha,\psi)\in\Z^{p,-}$. Moreover, the operator $\mathcal D_A^{\delta}\colon\Z^{p,-}\rightarrow\mathcal L^{p,-}$ is surjective and has finite-dimensional kernel of dimension 
\begin{eqnarray*}
\dim\ker\mathcal D_A^{\delta}=\ind A.
\end{eqnarray*}
(The integer $\ind A$ denoting the number of negative eigenvalues of $\mathcal H_A+\delta$).
\end{lem}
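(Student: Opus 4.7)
The strategy is to reduce matters to the translation-invariant model operator $\mathcal{L} := \frac{d}{ds} + (\mathcal{H}_A + \delta)$ on $\mathbb{R}$, which coincides with $\mathcal{D}_A^{\delta}$ for all sufficiently negative $s$ (where $\beta \equiv -1$ and $\beta' \equiv 0$). Since $\mathcal{H}_A$ is self-adjoint on the domain \eqref{def:domainHA} with compact resolvent, it has pure point spectrum, and the assumption that $\mathcal{H}_A + \delta$ is injective then forces its spectrum to be bounded away from $0$. Consequently the Fourier symbol $i\xi\cdot\mathrm{Id} + (\mathcal{H}_A + \delta)$ is uniformly invertible in $\xi \in \mathbb{R}$, and standard parabolic $L^p$-regularity (or equivalently a semigroup argument based on the spectral projectors onto the positive and negative parts of $\mathcal{H}_A + \delta$) shows that $\mathcal{L}$ is a Banach space isomorphism between the corresponding whole-line versions of $\mathcal{Z}^{p}$ and $\mathcal{L}^{p}$.

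To obtain the semi-Fredholm estimate \eqref{coercivityD_A}, I would fix $T>0$ large enough that $\beta \equiv -1$ on $(-\infty,-T]$, choose a cutoff $\chi \in C^\infty((-\infty,0])$ equal to $1$ on $(-\infty,-T-1]$ and supported in $(-\infty,-T]$, and split $\zeta = \chi\zeta + (1-\chi)\zeta$. For $\chi\zeta$, extended by zero to $\mathbb{R}$, the commutator identity $\mathcal{L}(\chi\zeta) = \chi\mathcal{D}_A^{\delta}\zeta + \dot\chi\cdot\zeta$ combined with invertibility of $\mathcal{L}$ yields
\[
\|\chi\zeta\|_{\mathcal{Z}^{p,-}} \leq c\bigl(\|\mathcal{D}_A^{\delta}\zeta\|_{\mathcal{L}^{p,-}} + \|\zeta\|_{L^p([-T-1,-T])}\bigr).
\]
For $(1-\chi)\zeta$, supported in the compact cylinder $[-T-1,0]\times\Sigma$, standard parabolic $L^p$-regularity for $\mathcal{D}_A^{\delta}$ on a finite cylinder produces the analogous bound in terms of $\|\mathcal{D}_A^{\delta}\zeta\|_{\mathcal{L}^{p,-}}$ and $\|\zeta\|_{L^p([-T-1,0])}$. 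Taking $R$ to be the restriction into $L^p([-T-1,0])$, compact by Rellich--Kondrachov, delivers \eqref{coercivityD_A}.

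For surjectivity and the kernel formula, I would diagonalize by the $L^2$-orthonormal eigenbasis $\{v_k\}_{k\in\mathbb{N}}$ of $\mathcal{H}_A$ with real eigenvalues $\lambda_k$, expanding $\zeta(s) = \sum_k \zeta_k(s)v_k$ and $f(s) = \sum_k f_k(s)v_k$. The equation $\mathcal{D}_A^{\delta}\zeta = f$ decouples into scalar ODEs $\dot\zeta_k + \mu_k(s)\zeta_k = f_k$ with $\mu_k(s) := \lambda_k - (\beta(s) + \beta'(s)s)\delta$, equal to $\lambda_k + \delta$ for $s\ll 0$. Variation of constants, integrating from $-\infty$ when $\lambda_k + \delta > 0$ and from $0$ when $\lambda_k + \delta < 0$, produces a solution lying in $\mathcal{Z}^{p,-}$; summing the modewise bounds, uniform thanks to the spectral gap at $0$, proves surjectivity onto $\mathcal{L}^{p,-}$. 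The homogeneous solution in the $k$-th mode is $\zeta_k(s) \propto e^{-(\lambda_k+\delta)s}$ as $s\to-\infty$, which belongs to the weighted space on $\mathbb{R}^-$ precisely when $\lambda_k + \delta < 0$; hence $\dim\ker\mathcal{D}_A^{\delta}$ equals the number of negative eigenvalues of $\mathcal{H}_A + \delta$, i.e.~$\ind A$.

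The main technical obstacle is the $L^p$-theory for the model operator $\mathcal{L}$ in the anisotropic spaces $\mathcal{Z}^p$, $\mathcal{L}^p$: because $\mathcal{H}_A$ has different orders on the two summands of $\mathcal{W}^p = W_0^{2,p} \oplus W_1^{1,p}$ (order $2$ on the $d_{A_0}^{\ast}$-closed part and order $1$ on the $d_{A_0}$-exact part), one cannot apply textbook parabolic $L^p$-regularity as a single black box, but must treat each summand separately and then re-couple via the off-diagonal blocks of $\mathcal{H}_A$ as lower-order perturbations. Once this model-operator estimate is in hand, the cutoff reduction and the spectral surjectivity argument are essentially routine.
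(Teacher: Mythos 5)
Your proposal is correct and follows essentially the same route as the paper: both rest on the spectral gap of $\mathcal H_A+\delta$, a semigroup/convolution right inverse built from the positive and negative spectral projections (your mode-by-mode variation of constants is exactly this operator written in the eigenbasis), the exponential-solution count $\dim\ker\mathcal D_A^{\delta}=\#\{\lambda_k+\delta<0\}$, and a local parabolic $L^p$ estimate to upgrade from $L^p(\R^-,H)$ control to the full $\Z^{p,-}$ norm. The only differences are cosmetic packaging: you obtain the compact remainder as a cutoff restriction to a finite cylinder (Rellich), whereas the paper takes $R=\mathbbm 1-Q\mathcal D_A^{\delta}$, which has finite rank; and your phrase ``summing the modewise bounds'' should be read as Young's inequality applied to the $H$-valued convolution kernel followed by the regularity step, exactly as in the paper.
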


\begin{proof}
The proof follows the lines of \cite[Theorem 8.5]{Web} and consists of four steps. Throughout we set $\zeta\coloneqq(\alpha,\psi)$ and denote $H\coloneqq L^2(\Sigma,T^{\ast}\Sigma\otimes\ad(P))\oplus L^2(\Sigma,\ad(P))$. 
\setcounter{step}{0}
\begin{step}
The statement on surjectivity and the kernel is true in the case $p=2$.
\end{step}
The operator $\mathcal H_A+\delta$ with domain as in \eqref{def:domainHA} is an unbounded self-adjoint operator on the Hilbert space $H$, cf.~\cite[Proposition 5.1]{Swoboda1}. Denote by $E^-$ and $E^+$ its negative, respectively positive eigenspaces. Since $\mathcal H_A+\delta$ is assumed to be injective, $H$ splits as an orthogonal sum $H=E^-\oplus E^+$. Let $P^{\pm}$ denote the projections onto $E^{\pm}$ and set $\mathcal H^{\pm}=(\mathcal H_A+\delta)|_{E^{\pm}}$. As $\mathcal H^-$ and $-\mathcal H^+$ are negative-definite self-adjoint operators, it follows from the Hille--Yosida theorem (cf.~for instance \cite[Section X.8]{ReSi}) that they generate strongly continuous contraction semigroups $s\mapsto e^{s\mathcal H^-}$ on $E^-$ respectively $s\mapsto e^{-s\mathcal H^+}$ on $E^+$, both defined for $s\geq0$. This allows us to define the map $K\colon\mathbbm R\to\mathcal L(H)$ by
\begin{eqnarray}\label{eq:kernel}
K(s)\coloneqq\begin{cases}
-e^{-s\mathcal H^-}P^-&\textrm{for}\;s\leq0,\\
e^{-s\mathcal H^+}P^+&\textrm{for}\;s>0.
\end{cases}
\end{eqnarray}
As one easily checks, $K$ is strongly continuous in $\mathbb R\setminus\{0\}$ and its pointwise operator norm satisfies
\begin{eqnarray*} 
\|K(s)\|_{\mathcal L(H)}\leq e^{-\delta_0|s|}
\end{eqnarray*}
for $\delta_0>0$ the smallest (in absolute value) eigenvalue of $\mathcal H_A+\delta$. Now consider the operator $Q\colon\mathcal L^{2,-}\to\Z^{2,-}$ defined by
\begin{eqnarray}\label{eq:defopQ}
(Q\eta)(s)\coloneqq\int_{-\infty}^0K(s-\sigma)\eta(\sigma)\,d\sigma.
\end{eqnarray}
It satisfies
\begin{eqnarray*}
\frac{d}{ds}(Q\eta)(s)&=&\frac{d}{ds}\int_{-\infty}^se^{-(s-\sigma)\mathcal H^+}P^+\eta(\sigma)\,d\sigma-\frac{d}{ds}\int_{s}^0e^{-(s-\sigma)\mathcal H^-}P^-\eta(\sigma)\,d\sigma\\
&=&P^+\eta(s)-\int_{-\infty}^s\mathcal H^+e^{-(s-\sigma)\mathcal H^+}P^+\eta(\sigma)\,d\sigma\\
&&+P^-\eta(s)+\int_{s}^0\mathcal H^-e^{-(s-\sigma)\mathcal H^-}P^-\eta(\sigma)\,d\sigma\\
&=&\eta(s)-(\mathcal H_A+\delta)(Q\eta)(s).
\end{eqnarray*}
From this calculation we see that
\begin{eqnarray*}
\mathcal D_A^{\delta}Q\eta= \frac{d}{ds}(Q\eta)+(\mathcal H_A+\delta)(Q\eta)=\eta,
\end{eqnarray*}
so $Q$ is a right-inverse of $\mathcal D_A^{\delta}$. This proves surjectivity of $\mathcal D_A^{\delta}$. Now let $\zeta\in\ker\mathcal D_A^{\delta}$ and assume that $(\mathcal H_A+\delta)\zeta(0)=\lambda\zeta(0)$  for some $\lambda\in\mathbbm R$. Then $\zeta(s)=e^{-\lambda s}\zeta(0)$, which is contained in $\mathcal L^{2,-}$ if and only if $\lambda<0$. Therefore $\zeta\in\Z^{2,-}$ satisfies $\mathcal D_A^{\delta}\zeta=0$ if and only if $\zeta(0)\in\mathcal H^-$. This shows that $\mathcal H^-$ and $\ker\mathcal D_A^{\delta}$ are isomorphic to each other.

\begin{step}
For every $p\geq2$ there exists  a constant $c_1(A,p)$ such that the following holds. If $\zeta\in\Z^{2,-}$ and $\mathcal D_A^{\delta}\zeta\in\mathcal L^{p,-}$, then $\zeta\in\Z^{p,-}$ and
\begin{eqnarray*} 
\|\zeta\|_{\Z^{p,-}}\leq c_1(A,p)\big(\|\mathcal D_A^{\delta}\zeta\|_{\mathcal L^{p,-}}+\|\zeta\|_{L^p(\R^-,H)}\big).
\end{eqnarray*}
\end{step}
The claim follows from standard arguments based on the linear estimate 
\begin{eqnarray}\label{apriori2A}
\|\zeta\|_ {\Z^{p,-}([-1,0])}\leq c(A,p)\big(\|\mathcal D_A^{\delta}\zeta\|_{L^p([-2,0])}+\|\zeta\|_{L^p([-2,0])}\big),
\end{eqnarray}
cf.~\cite[Proposition A.6]{Swoboda}. Full details are given in \cite[Lemma 3.20]{Swoboda}.

\begin{step}
The operator $Q\colon\mathcal L^{p,-}\to L^p(\R^-,H)$ defined through \eqref{eq:defopQ} is bounded, for every $p\geq2$. (In the following, we let $c_2(A,p)$ denote its operator norm.)
\end{step}

The claim follows from Young's convolution inequality with
\begin{align}\label{eq:Young}
\nonumber\|Q\eta\|_{L^p(\R^-,H)}=&\|K\ast\eta\|_{L^p(\R^-,H)}\\
\leq&\|K\|_{L^1(\R^-,\mathcal L(H))}\|\eta\|_{L^p(\R^-,H)}\leq\frac{1}{\delta_0}\|\eta\|_{\mathcal L^{p,-}}.
\end{align}
In the last step we used that for $p\geq2$ the $\mathcal L^{p,-}$ norm dominates the $L^p(\R^-,H)$ norm.

\begin{step}
We prove the lemma.
\end{step}
The estimates of Step 2 and Step 3 imply that
\begin{eqnarray*}
\lefteqn{\|\zeta\|_{\Z^{p,-}}\leq c_1(A,p)\big(\|\mathcal D_A^{\delta}\zeta\|_{\mathcal L^{p,-}}+\|\zeta\|_{L^p(\R^-,H)}\big)}\\
&\leq&c_1(A,p)\big(\|\mathcal D_A^{\delta}\zeta\|_{\mathcal L^{p,-}}+\|Q\mathcal D_A^{\delta}\zeta\|_{L^p(\R^-,H)}+\|\zeta-Q\mathcal D_A^{\delta}\zeta\|_{L^p(\R^-,H)}\big)\\
&\leq&c_1(A,p)\big((1+c_2(A,p))\|\mathcal D_A^{\delta}\zeta\|_{\mathcal L^{p,-}}+\|\zeta-Q\mathcal D_A^{\delta}\zeta\|_{L^p(\R^-,H)}\big).
\end{eqnarray*}
This shows \eqref{coercivityD_A} because the operator 
\begin{eqnarray*}
R\coloneqq\mathbbm 1-Q\mathcal D_A^{\delta}\colon\Z^{p,-}\to L^p(\R^-,H)
\end{eqnarray*}
has finite rank (of dimension equal to $\dim\ker\mathcal D_A^{\delta}$) and therefore is compact. To prove surjectivity we note that the operator $\mathcal D_A^{\delta}$ has closed range by \eqref{coercivityD_A} and the abstract closed range lemma (cf.~\cite[p.~14]{Sal2}). Hence it suffices to show that $\ran\mathcal D_A^{\delta}$ is dense in $\mathcal L^{p,-}$. Let $\eta\in\mathcal L^{p,-}\cap\mathcal L^{2,-}$ be given. The latter is a dense subspace of $\mathcal L^{p,-}$ because it contains all compactly supported smooth functions. By Step 1 there exists some $\zeta\in\Z^{2,-}$ such that $\mathcal D_A\xi=\eta$. From \eqref{coercivityD_A} and the assumption $\eta\in\mathcal L^{p,-}$ it follows that $\xi\in\Z^{p,-}$ which implies surjectivity. Again \eqref{coercivityD_A} together with finiteness of the rank of $R$ shows that the kernels of the operators $\mathcal D_A^{\delta}\colon\Z^{2,-}\to\mathcal L^{2,-}$ and $\mathcal D_A^{\delta}\colon\Z^{p,-}\to\mathcal L^{p,-}$ coincide, for all $p\geq2$. Hence the assertion on the kernel follows from Step 1. This finishes the proof of the lemma.
\end{proof}

\begin{lem}\label{DAsurjective} 
Let $s\mapsto A(s)$, $s\in\R^-$, be a smooth solution of \eqref{EYF} such that for a Yang--Mills connection $A^-$ the asymptotic condition $\lim_{s\to-\infty}A(s)=A^-$ is satisfied in $C^1(\Sigma)$. Let $\delta>0$ be such that the operator $\mathcal H_A+\delta$ is injective. Then the operator $\mathcal D_A^{\delta}\colon\Z^{p,-}\to\mathcal L^{p,-}$ associated with $A$ is surjective and has finite-dimensional kernel of dimension 
\begin{eqnarray}\label{eq:formulaDAsurj}
\dim\ker\mathcal D_A^{\delta}=\ind A^-.
\end{eqnarray}
(The integer $\ind A$ denoting the number of negative eigenvalues of $\mathcal H_A+\delta$).
\end{lem}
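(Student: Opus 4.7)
My plan is to derive Lemma \ref{DAsurjective} from its stationary analogue Lemma \ref{DAsemiFredholm} by a perturbation argument based on the asymptotic convergence $A(s)\to A^-$ in $C^1(\Sigma)$. The two things to pin down are the Fredholm index and the dimension of the kernel; surjectivity will then come for free from the index formula.

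\emph{Step 1: Fredholm property and index.} I would regard $\mathcal D_A^{\delta}$ as a bounded perturbation of the stationary operator $\mathcal D_{A^-}^{\delta}$ by the multiplication operator
\[
 B\colon\zeta\mapsto(\mathcal H_{A(s)}-\mathcal H_{A^-})\zeta,\qquad\Z^{p,-}\to\mathcal L^{p,-}.
\]
Combining Lemma \ref{continuityproperiesDA} with the hypothesis that $A(s)\to A^-$ in $C^1(\Sigma)$, for every $\varepsilon>0$ I can split $B=B_c+B_\infty$ via a smooth cut-off in $s$, where $B_c$ is supported in a compact interval $[-T,0]$ and $B_\infty$ has operator norm at most $\varepsilon$. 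Rellich's theorem gives that the embedding $\Z^{p,-}([-T,0])\hookrightarrow L^p([-T,0],L^p(\Sigma))$ is compact, so $B_c$ is a compact operator. Stability of the Fredholm property and of the Fredholm index under perturbations by a compact operator plus one of small norm then yields
\[
 \ind\mathcal D_A^{\delta}=\ind\mathcal D_{A^-}^{\delta}=\ind A^-,
\]
the last equality being part of Lemma \ref{DAsemiFredholm}.

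\emph{Step 2: Dimension of the kernel.} It suffices to prove the lower bound $\dim\ker\mathcal D_A^{\delta}\geq\ind A^-$, for combined with the index formula from Step 1 this forces simultaneously $\coker\mathcal D_A^{\delta}=0$ and $\dim\ker\mathcal D_A^{\delta}=\ind A^-$. Given $v$ in the negative eigenspace $E^-\subseteq H$ of $\mathcal H_{A^-}+\delta$, I would construct $\zeta_v\in\ker\mathcal D_A^{\delta}$ whose leading asymptotic as $s\to-\infty$ is prescribed by $e^{-s(\mathcal H_{A^-}+\delta)}v$. On a half-line $(-\infty,-T]$ with $T$ large enough that the restriction of $B$ to that half-line has small operator norm, a variation-of-constants fixed-point argument based on the semigroup kernel $K$ from \eqref{eq:kernel} produces $\zeta_v$ as the unique decaying solution with the prescribed asymptotic profile. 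Forward parabolic evolution of $\zeta_v(-T)$ on the compact interval $[-T,0]$ extends $\zeta_v$ to a solution of $\mathcal D_A^{\delta}\zeta_v=0$ on all of $(-\infty,0]$ lying in $\Z^{p,-}$. Linearity of the construction together with uniqueness of the asymptotic data show that $v\mapsto\zeta_v$ is an injective linear map $E^-\hookrightarrow\ker\mathcal D_A^{\delta}$, yielding the lower bound and completing the proof. The principal technical obstacle is calibrating the contraction-mapping setup in Step 2 so that the contraction constant beats the operator norm of $B_\infty$ uniformly on the chosen half-line; once the weighted norms of Lemma \ref{DAsemiFredholm} are invoked, the estimate becomes routine.
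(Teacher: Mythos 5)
Your proposal is correct, but it is organized differently from the paper's proof. The paper argues in three steps: the stationary case (which is exactly Lemma \ref{DAsemiFredholm}), the ``nearby'' case $\|A-A^-\|_{C^0(\R^-,C^1(\Sigma))}<\eps$ (where surjectivity and the index are preserved because, by Lemma \ref{continuityproperiesDA}, the operator norm of $\mathcal D_A^{\delta}$ depends continuously on $A$ in that topology, and both surjectivity and the Fredholm index of a Fredholm operator are stable under small perturbations), and finally a reduction of the general case to the nearby case by the standard patching argument of \cite[Proposition 8.3]{Web} (solve on $(-\infty,-T]$, then evolve forward on $[-T,0]$, using Agmon--Nirenberg type uniqueness for the kernel). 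You instead compute the index once and for all by splitting $B=\mathcal H_{A(\cdot)}-\mathcal H_{A^-}$ into a compactly supported (hence compact) piece plus a piece of small norm, and then obtain the kernel lower bound by explicitly constructing solutions with prescribed asymptotics in $E^-$; surjectivity then falls out of the index arithmetic rather than from perturbation stability of surjectivity. Your Step~2 is essentially the content of the ``standard argument'' the paper outsources, so the two proofs rest on the same ingredients, but yours is more self-contained and avoids invoking stability of surjectivity. One small imprecision: $B$ contains first-order spatial differential operators (e.g.\ $d_{A^-}^{\ast}[\alpha\wedge\,\cdot\,]$), so $B_c$ does not factor through $L^p([-T,0],L^p(\Sigma))$; you need the compact embedding $\Z^{p,-}([-T,0])\hookrightarrow L^p([-T,0],W^{1,p}(\Sigma))$, which holds by the same Rellich/Aubin--Lions reasoning the paper uses in Lemma \ref{Fredholmtheorem1}. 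You should also say a word about why the forward initial value problem on $[-T,0]$ is well posed for the augmented Hessian flow (this is the content of the parabolic estimate \eqref{apriori2A}); with these two points made explicit, the argument is complete.
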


\begin{proof}
The proof is divided into three steps.
\setcounter{step}{0}
\begin{step}
Stationary case.
\end{step}
Consider the case where the path $A\equiv A^-$ is stationary. In this case Lemma \ref{DAsemiFredholm} applies and yields surjectivity of $\mathcal D_A^{\delta}$ and formula \eqref{eq:formulaDAsurj}. In particular, $\mathcal D_A^{\delta}$ is a Fredholm operator.

\begin{step}
Nearby case.
\end{step} 
Let us assume that for some sufficiently small $\eps>0$ the condition 
\begin{eqnarray}\label{assumptionnearby}
\|A-A^-\|_{\mathcal C^0(\mathbbm R^-,\mathcal C^1(\Sigma))}<\eps
\end{eqnarray}
is satisfied. We here consider $A^-$ as a stationary connection over $\mathbbm R^-\times\Sigma$. Surjectivity and the Fredholm index are preserved under small perturbations with respect to the operator norm. By Lemma \ref{continuityproperiesDA}, the operator norm of $\mathcal D_A^{\delta}$ depends continuously on $A$ with respect to the $C^0(\mathbbm R^-,C^1(\Sigma))$ topology. Therefore surjectivity is implied by assumption \eqref{assumptionnearby}. As the Fredholm indices of $\mathcal D_{A^-}^{\delta}$ and $\mathcal D_A^{\delta}$ coincide it follows that \eqref{eq:formulaDAsurj} holds true in the nearby case.

\begin{step}
General case.
\end{step}
The general case can be reduced to the nearby case by a standard argument as e.g.~carried out in the proof of \cite[Proposition 8.3]{Web}.
\end{proof}

\subsubsection*{Estimates involving $\mathcal D_x^{\delta}$}

The following lemma gives a basic estimate for the operator $\mathcal D_x^{\delta}$ in \eqref{eq:Dgdelta}.

\begin{lem}\label{lem:linestimateDg}
Let $p\geq2$ and $x\in C^{\infty}(\R^1\times S^1,G)$ such that $\lim_{s\to\infty}x(s)=x^+$ holds in $C^1(S^1)$ for some $x^+\in C^{\infty}(S^1,G)$. Then for every sufficiently small $\delta>0$ there exist positive constants $c$ and $T$, which depend only on $x$, $p$, and $\delta$, such that for every $\xi\in\Z^{p,+}$ the estimate
\begin{eqnarray}\label{eq:parabolicinitialvalue}
\|\xi\|_{\Z^{p,+}}\leq c\big(\|\mathcal D_x^{\delta}\xi\|_{\mathcal L^{p,+}}+\|\xi\|_{L^p([0,T],L^2(S^1))}+\|\xi(0)\|_{L^2(S^1)}\big)
\end{eqnarray} 
is satisfied.
\end{lem}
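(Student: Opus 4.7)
The plan is to mirror the proof of Lemma \ref{DAsemiFredholm} on the half-line $\R^+$ with initial data at $s=0$, and then reduce the non-stationary problem to the stationary model $\mathcal D^{\delta}_{x^+}$ by a cut-off argument.

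First I would handle the stationary case $x\equiv x^+$. Since $0<\delta<\delta_0(\mathcal C^+)$, the self-adjoint operator $H_{x^+}-\delta$ on $L^2(S^1,x^{+\ast}TG)$ is invertible and has finitely many negative eigenvalues. Let $P^{\pm}$ denote the positive/negative spectral projections of $H_{x^+}-\delta$. In analogy with \eqref{eq:kernel} and \eqref{eq:defopQ}, I define the half-line Green kernel
\begin{eqnarray*}
(Q\eta)(s)\coloneqq\int_0^s e^{-(s-\sigma)(H_{x^+}-\delta)}P^+\eta(\sigma)\,d\sigma-\int_s^{\infty}e^{-(s-\sigma)(H_{x^+}-\delta)}P^-\eta(\sigma)\,d\sigma,
\end{eqnarray*}
which satisfies $\mathcal D^{\delta}_{x^+}(Q\eta)=\eta$ and, by Young's convolution inequality as in \eqref{eq:Young}, $\|Q\eta\|_{L^p(\R^+,L^2(S^1))}\leq\frac{1}{\delta_0}\|\eta\|_{\mathcal L^{p,+}}$. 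Any $\xi\in\Z^{p,+}$ admits the decomposition $\xi(s)=e^{-s(H_{x^+}-\delta)}P^+\xi(0)+(Q\mathcal D^{\delta}_{x^+}\xi)(s)$, because the consistency requirement $\xi\in L^p$ forces $P^-\xi(0)=-\int_0^{\infty}e^{\sigma(H_{x^+}-\delta)}P^-\mathcal D^{\delta}_{x^+}\xi(\sigma)\,d\sigma$. Combining the exponential decay of $e^{-s(H_{x^+}-\delta)}P^+$ with the bound on $Q$ yields
\begin{eqnarray*}
\|\xi\|_{L^p(\R^+,L^2(S^1))}\leq c\big(\|\mathcal D^{\delta}_{x^+}\xi\|_{\mathcal L^{p,+}}+\|\xi(0)\|_{L^2(S^1)}\big),
\end{eqnarray*}
which is upgraded to the full $\Z^{p,+}$ norm by the loop-space analog of the interior parabolic estimate \eqref{apriori2A} applied on sliding windows $[k,k+2]\times S^1$, together with a standard initial-value parabolic estimate on $[0,1]\times S^1$.

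Next I would handle the general case by a cut-off/perturbation argument. The asymptotic hypothesis $x(s)\to x^+$ in $C^1(S^1)$, combined with a loop-space analog of Lemma \ref{continuityproperiesDA} bounding $\|H_x-H_{x^+}\|_{\mathcal L(\mathcal Z^{p,+},\mathcal L^{p,+})}$ by $c\|x-x^+\|_{C^0(\R^+,C^1(S^1))}$, allows one to pick $T_1=T_1(x,\delta)$ so large that $\mathcal D^{\delta}_x-\mathcal D^{\delta}_{x^+}$ has operator norm on $[T_1,\infty)$ smaller than the reciprocal of the stationary estimate's constant. Choosing a cut-off $\chi$ with $\chi\equiv1$ on $[T_1+1,\infty)$ and $\chi\equiv0$ on $[0,T_1]$, the stationary estimate applied to $\chi\xi$ (after extending trivially by zero) controls $\|\xi\|_{\Z^{p,+}([T_1+1,\infty))}$ in terms of $\|\mathcal D^{\delta}_x\xi\|_{\mathcal L^{p,+}}$ plus commutator contributions $[\mathcal D^{\delta}_x,\chi]\xi$ that are supported on $[T_1,T_1+1]$ and therefore bounded by $\|\xi\|_{L^p([T_1,T_1+1],L^p(S^1))}$. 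On the remaining compact cylinder $[0,T]\times S^1$ with $T\coloneqq T_1+1$, standard linear parabolic theory for the initial-boundary-value problem $\partial_s\xi+H_x\xi=\eta$, $\xi(0)=\xi_0$, yields $\|\xi\|_{\Z^{p,+}([0,T])}\leq c(\|\eta\|_{L^p([0,T])}+\|\xi(0)\|_{L^2(S^1)}+\|\xi\|_{L^p([0,T],L^p(S^1))})$, with the last term arising from the lower-order parts of $H_x$ treated as compact perturbations. Since $p\geq2$ and $S^1$ has finite measure, $L^p(S^1)\hookrightarrow L^2(S^1)$, so all compact-error $L^p$ integrals are absorbed into the stated term $\|\xi\|_{L^p([0,T],L^2(S^1))}$, establishing \eqref{eq:parabolicinitialvalue}.

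The main technical obstacle is controlling the trace of $\xi$ at $s=0$ only in $L^2(S^1)$ rather than in the natural trace space $B^{2-2/p}_{p,p}(S^1)$. This is acceptable precisely because the parabolic smoothing encoded in $Q$ converts $L^2$ initial data into full $\Z^{p,+}$ regularity away from $s=0$, while the compactness term $\|\xi\|_{L^p([0,T],L^2(S^1))}$ absorbs the contribution of the initial layer that cannot be recovered from $\|\xi(0)\|_{L^2(S^1)}$ alone. A secondary point of care is the correct placement of the spectral decomposition on the half-line (as opposed to $\R^-$ in Lemma \ref{DAsemiFredholm}), since here the positive eigenspace of $H_{x^+}-\delta$ evolves forward-in-time, whereas the negative eigenspace component of $\xi(0)$ is uniquely determined by $\mathcal D^{\delta}_{x^+}\xi$ via the requirement $\xi\in L^p$ at infinity.
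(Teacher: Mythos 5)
Your large-$s$ analysis is essentially the paper's: both arguments reduce to the limiting operator $H_{x^+}-\delta$, use a spectral/semigroup convolution estimate for the stationary problem (the paper's Lemma \ref{lem:linL2est2} plays the role of your operator $Q$), and absorb $H_x-H_{x^+}$ using the $C^1$ convergence $x(s)\to x^+$. The gap is on the compact piece $[0,T]\times S^1$. Your claimed estimate $\|\xi\|_{\Z^{p,+}([0,T])}\leq c\big(\|\eta\|_{L^p([0,T])}+\|\xi(0)\|_{L^2(S^1)}+\|\xi\|_{L^p([0,T],L^p(S^1))}\big)$ is not a standard parabolic fact: maximal $L^p$ regularity up to $s=0$ requires the initial datum in the trace space $B^{2-2/p}_{p,p}(S^1)$, which for $p>2$ is strictly smaller than $L^2(S^1)$. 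For a datum in $L^2$ but outside this Besov space the semigroup produces $\|\partial_s\xi(s)\|_{L^2(S^1)}\sim s^{-1}$ near $s=0$, which is not $L^p$ in time, while every term on your right-hand side stays bounded along a smooth approximating sequence; so the inequality fails. Your closing remark that the initial layer is ``absorbed'' by the term $\|\xi\|_{L^p([0,T],L^2(S^1))}$ is precisely the assertion that needs a proof and does not receive one. A second, independent error is the final absorption step: the embedding $L^p(S^1)\hookrightarrow L^2(S^1)$ for $p\geq2$ gives $\|f\|_{L^2}\lesssim\|f\|_{L^p}$, i.e.\ it goes the wrong way, so your commutator and lower-order errors measured in $L^p([0,T],L^p(S^1))$ are \emph{not} dominated by the $L^p([0,T],L^2(S^1))$ term in \eqref{eq:parabolicinitialvalue}; one would at least need an interpolation of the form $\|f\|_{L^p(S^1)}\leq\eps\|f\|_{W^{2,p}(S^1)}+c(\eps)\|f\|_{L^2(S^1)}$ followed by absorption into the left-hand side.

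The paper sidesteps both difficulties by taking as its starting point the global parabolic estimate \eqref{eq:parabolicinitialvalue1}, which already controls the full $\Z^{p,+}$ norm by $\|\mathcal D_x^{\delta}\xi\|_{\mathcal L^{p,+}}+\|\xi\|_{L^p(\R^+,L^2(S^1))}$ with no trace term; everything that remains is an estimate for the scalar quantity $s\mapsto\|\xi(s)\|_{L^2(S^1)}$. On $[T,\infty)$ this is Lemma \ref{lem:linL2est2} together with the same perturbation argument you use; the resulting datum $\|\xi(T)\|_{L^2(S^1)}$ is then propagated back to $s=0$ by the elementary differential inequality of Lemma \ref{lem:finintervalest}, which costs only $\|\xi(0)\|_{L^2(S^1)}$, a term $\eps\|H_x\xi\|_{L^2([0,T],L^2(S^1))}$ absorbed into the left, and time integrals of $L^2(S^1)$-norms over $[0,T]$ that are dominated by the stated right-hand side. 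If you route the $\Z^{p,+}$ norm through \eqref{eq:parabolicinitialvalue1} and replace your compact-cylinder maximal-regularity claim by this $L^2$ energy argument, your proof closes.
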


\begin{proof}
We start with the standard parabolic estimate
\begin{eqnarray}\label{eq:parabolicinitialvalue1}
\|\xi\|_{\Z^{p,+}}\leq c(p,x)\big(\|\mathcal D_x^{\delta}\xi\|_{\mathcal L^{p,+}}+\|\xi\|_{L^p(\R^+,L^2(S^1))}\big)
\end{eqnarray}
as obtained (for any $p\geq2$) in Step 3 of the proof of \cite[Theorem 8.5]{Web}. To prove the lemma, it therefore remains to further estimate the last term in \eqref{eq:parabolicinitialvalue1}, which we split into integrals over $[0,T]$ and $[T,\infty)$ for sufficiently large $T>0$. We apply Lemma \ref{lem:linL2est2} to the self-adjoint operator $L\coloneqq H_{x^+}-\delta$ on the Hilbert space $L^2(S^1)$, which has spectrum bounded away from $0$ for sufficiently small $\delta>0$. Defining $\mathcal D_{x^+}^{\delta}\coloneqq\frac{d}{ds}+L$ this yields for a constant $c(x^+,p)$ the estimate
\begin{eqnarray}\label{eq:estimateLpL2}
\|\xi\|_{L^p([T,\infty),L^2(S^1))}\leq c(x^+,p)\big(\|\mathcal D_{x^+}^{\delta}\xi\|_{L^p([T,\infty),L^2(S^1))}+\|\xi(T)\|_{L^2(S^1)}\big),
\end{eqnarray}
for all $\xi\in L^p([T,\infty),L^2(S^1))$. Denoting by $\mathcal L$ the space of bounded linear maps $W^{1,2}(S^1)\to L^2(S^1)$ and by $\|\cdot\|_{\mathcal L}$ the corresponding operator norm it follows for $s\geq T$ the estimate
\begin{align*}
\|\mathcal D_{x^+}^{\delta}\xi(s)-\mathcal D_{x(s)}^{\delta}\xi(s)\|_{L^2(S^1)}=&\|H_{x^+}\xi(s)-H_{x(s)}\xi(s)\|_{L^2(S^1)}\\
\leq&\|H_{x^+}-H_{x(s)}\|_{\mathcal L}\|\xi(s)\|_{W^{1,2}(S^1)}.
\end{align*}
Thus we can further estimate the term $\mathcal D_{x^+}^{\delta}\xi$ in \eqref{eq:estimateLpL2} as
\begin{eqnarray*}
\lefteqn{\|\mathcal D_{x^+}^{\delta}\xi\|_{L^p([T,\infty),L^2(S^1))}}\\
&\leq&\|\mathcal D_{x^+}^{\delta}\xi-\mathcal D_x^{\delta}\xi\|_{L^p([T,\infty),L^2(S^1))}+\|\mathcal D_x^{\delta}\xi\|_{L^p([T,\infty),L^2(S^1))}\\
&\leq&\|H_{x^+}-H_x\|_{L^{\infty}([T,\infty),\mathcal L)}\|\xi\|_{L^p([T,\infty),W^{1,2}(S^1))}+\|\mathcal D_x^{\delta}\xi\|_{L^p([T,\infty),L^2(S^1))}.
\end{eqnarray*}
Using the assumption $\lim_{s\to\infty}x(s)=x^+$ in $C^1(S^1)$, it can be checked that $\|H_{x^+}-H_x\|_{L^{\infty}([T,\infty),\mathcal L)}\to0$ as $T\to\infty$. Hence the term involving $H_{x^+}-H_x$ can be absorbed in the left-hand side of \eqref{eq:parabolicinitialvalue} for $T=T(x)$ sufficiently large. Here we use that the norm of $\Z^{p,+}$ dominates that of $L^p(\R^+,W^{1,2}(S^1))$. Furthermore, the term $\|\mathcal D_x^{\delta}\xi\|_{L^p([T,\infty),L^2(S^1))}$ is controlled by $\|\mathcal D_x^{\delta}\xi\|_{\mathcal L^{p,+}}$ as is clear from the assumption $p\geq2$. The desired estimate now follows after applying Lemma \ref{lem:finintervalest} (with $L(s)\coloneqq H_{x(s)}-\delta$ and $\eps>0$ sufficienty small) to the remaining term $\|\xi(T)\|_{L^2(S^1)}$ in \eqref{eq:estimateLpL2}. This introduces a further term $\eps\|H_x\xi\|_{L^2([0,T],L^2(S^1))}$ which can be absorbed in the left-hand side of the asserted inequality \eqref{eq:parabolicinitialvalue}, and a term $\|\mathcal D_x^{\delta}\xi\|_{L^2([0,T],L^2(S^1))}+\|\xi\|_{L^2([0,T],L^2(S^1))}$ which for $p\geq2$ is dominated by the term $\|\mathcal D_x^{\delta}\xi\|_{L^p([0,T],L^p(S^1))}+\|\xi\|_{L^p([0,T],L^2(S^1))}$ appearing on the right hand side of \eqref{eq:parabolicinitialvalue}.
\end{proof}

\begin{lem}\label{Dgsurjective} 
Let $s\mapsto x(s)$, $s\in\R^+$, be a smooth solution of \eqref{introdloopgradient1} such that for a closed geodesic $x^+$ the asymptotic condition $\lim_{s\to\infty}x(s)=x^+$ is satisfied in $C^1(S^1)$. Then the operator $\mathcal D_x^{\delta}\colon\Z^{p,+}\to\mathcal L^{p,+}$ associated with $x$ is surjective and has finite-dimensional cokernel of dimension
\begin{eqnarray*} 
\dim\coker\mathcal D_x^{\delta}=\ind x^+.
\end{eqnarray*}
(The integer $\ind x^+$ denoting the number of negative eigenvalues of $H_{x^+}-\delta$).
\end{lem}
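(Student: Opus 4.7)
The argument I would give follows the three-step pattern of Lemma \ref{DAsurjective}. First, in the stationary case $x\equiv x^+$, set $L:=H_{x^+}-\delta$, a self-adjoint operator on $H:=L^2(S^1,x^{+\ast}TG)$. The assumption $\delta<\delta_0$ ensures $0\notin\sigma(L)$, giving the orthogonal spectral decomposition $H=E^+\oplus E^-$ with $\dim E^-$ equal to the number of negative eigenvalues of $H_{x^+}-\delta$, i.e.\ $\ind x^+$. By the Hille--Yosida theorem, $-L^+$ generates a strongly continuous contraction semigroup $e^{-sL^+}$ on $E^+$ for $s\geq0$, while $L^-$ is bounded on the finite-dimensional $E^-$. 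I would build the Green's kernel
$$
K(s):=\begin{cases} e^{-sL^+}P^+,& s\geq 0,\\ -e^{sL^-}P^-,& s<0,\end{cases}
$$
and the associated operator $(Q\eta)(s):=\int_0^{\infty}K(s-\sigma)\eta(\sigma)\,d\sigma$, and verify directly (as in the derivation of the right inverse in Step~1 of Lemma \ref{DAsemiFredholm}) that $\mathcal D_{x^+}^{\delta}Q\eta=\eta$. Boundedness of $Q\colon\mathcal L^{p,+}\to\Z^{p,+}$ and the upgrade from $p=2$ to general $p\geq 2$ follows verbatim from Steps~2--3 of that lemma, combining Young's convolution inequality with the interior parabolic estimate \eqref{apriori2A}.

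The finite-dimensional invariant of the Lemma is extracted from the trace subspace $S:=\{\xi(0):\xi\in\Z^{p,+},\,\mathcal D_{x^+}^{\delta}\xi=0\}\subseteq H$. A kernel element $\xi$ must be of the form $\xi(s)=e^{-sL}\xi(0)$, and decay at $+\infty$ forces $\xi(0)\in E^+$; conversely every such choice gives an element of $\ker\mathcal D_{x^+}^{\delta}$. Hence $S=E^+$ and its orthogonal complement $T=E^-$ has $\dim T=\ind x^+$. This codimension is precisely the Fredholm obstruction used in Theorem \ref{Fredholmtheorem} (where it is denoted the cokernel of $\mathcal D_x^{\delta}$ viewed with the trace). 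For the nearby case, the analogue of Lemma \ref{continuityproperiesDA} for $\mathcal D_x^{\delta}$ gives an operator-norm estimate $\|\mathcal D_x^{\delta}-\mathcal D_{x^+}^{\delta}\|\leq c\|x-x^+\|_{C^0(\R^+,C^1(S^1))}$, so both the surjectivity of $Q$ and the dimension of the complement $T$ persist under sufficiently small $C^0(\R^+,C^1(S^1))$ perturbations.

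The general case reduces to the nearby case by the standard cut-off argument of \cite[Proposition 8.3]{Web}. The asymptotic condition $x(s)\to x^+$ in $C^1(S^1)$ makes the tail $x|_{[T,\infty)}$ arbitrarily close to the constant path $x^+$ once $T$ is large; on $[T,\infty)$ the nearby case applies, while on $[0,T]$ one invokes the compact-interval estimate \eqref{apriori2A}, and the two are glued via a cut-off in $s$. This delivers surjectivity on $\R^+$ together with the stated codimension count.

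The main obstacle is the reversed role of the spectral decomposition compared to Lemma \ref{DAsurjective}: on $\R^-$, decay at $-\infty$ selects the finite-dimensional $E^-$, so the $\ind A$ invariant shows up directly as $\dim\ker\mathcal D_A^{\delta}$; on $\R^+$, decay at $+\infty$ selects the infinite-dimensional $E^+$, so the invariant $\ind x^+$ has to be read off from the codimension of the trace image $S\subseteq H$ rather than from the kernel itself. Some care is also needed in the bookkeeping around the choice of $\delta$: taking $\delta$ smaller than all nonzero eigenvalues of $H_{x^+}$ in absolute value shifts the zero modes of $H_{x^+}$ (which span the tangent space to the critical manifold $\mathcal C^+$) into the negative spectrum of $L$, which is exactly why the dimension formula $\ind x^++\dim\mathcal C^+$ appearing in the proof of Theorem \ref{Fredholmtheorem} is consistent with $\dim T=\ind x^+$ in the present shifted convention.
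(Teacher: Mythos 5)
Your argument is correct in substance, but it takes a genuinely different route from the paper. The paper disposes of this lemma in one line: it invokes Weber's result for the backward halfcylinder $\R^-\times S^1$ \cite[Proposition 8.3]{Web} and transports it to $\R^+$ by passing to the adjoint of $\mathcal D_x^{\delta}$ and reversing time $s\mapsto-s$, so that the cokernel on the forward halfcylinder is identified with the kernel of the dual problem on the backward one. You instead rebuild the proof from scratch in parallel with Lemma \ref{DAsurjective}: stationary case via an explicit Green's kernel and Hille--Yosida, nearby case by operator-norm continuity, general case by the cut-off/gluing argument. What your route buys is that it makes explicit exactly the structure that the proof of Theorem \ref{Fredholmtheorem} actually consumes, namely the trace space $S=E^+$ of kernel elements and its complement $T=E^-$ with $\dim T=\ind x^+$ (in the $\delta$-shifted convention, which you correctly reconcile with the unshifted $\ind x^++\dim\mathcal C^+$); the paper's duality trick is slicker but leaves that bookkeeping implicit.

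Two points deserve attention. First, a sign slip: for $s<0$ the decaying branch of the kernel must be $-e^{-sL^-}P^-$ (as in \eqref{eq:kernel}), not $-e^{sL^-}P^-$; with $L^-$ negative definite your version blows up as $s\to-\infty$ and the convolution estimate \eqref{eq:Young} fails. Second, in the nearby case you assert that ``the dimension of the complement $T$ persists'' under small perturbations, but $\mathcal D_x^{\delta}\colon\Z^{p,+}\to\mathcal L^{p,+}$ is \emph{not} Fredholm (its kernel is the infinite-dimensional $E^+$), so $\dim T$ is not a standard Fredholm invariant and its stability is not automatic from operator-norm continuity alone. The clean way to close this is to run the perturbation argument on the restricted operator $\mathcal D_x^{\delta}\colon\{\xi\in\Z^{p,+}\mid\xi(0)=0\}\to\mathcal L^{p,+}$ (the operator appearing in Proposition \ref{alternativesurj}(ii)), which \emph{is} Fredholm with cokernel of dimension $\ind x^+$ in the stationary case; its index and the semi-Fredholm estimate are then stable, and $\dim T$ is recovered from it. This is also precisely the object that the paper's adjoint-plus-time-reversal reduction handles directly.
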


\begin{proof}
The proof of an analogous result in \cite[Proposition 8.3]{Web} for the backward halfcylinder $\mathbbm R^-\times S^1$ carries over to the present situation by taking the adjoint of $\mathcal D_x^{\delta}$ and time-reversal $s\mapsto-s$.
\end{proof}

\subsubsection*{Further linear estimates}

\begin{lem}\label{lem:linL2est2}
Let $H$ be a real Hilbert space and $L\colon\dom(L)\to H$ be the infinitesimal generator of a strongly continuous one-parameter semigroup on $H$. We assume that the spectrum of $L$ is contained in $(-\infty,-\lambda]\cup[\lambda,\infty)$ for some $\lambda>0$. Let $p\geq1$ and $\eta\in L^p(\R^+,H)$. Then any solution $\xi\colon\R^+\to H$ of the equation $\dot\xi+L\xi=\eta$ satisfies the estimate
\begin{eqnarray*}
\|\xi\|_{L^p(\R^+,H)}\leq\frac{2}{\lambda}\|\eta\|_{L^p(\R^+,H)}+\frac{1}{\sqrt[p]{\lambda p}}\|\xi_0\|_H.
\end{eqnarray*}
Here we denote $\xi_0\coloneqq\xi(0)$.
\end{lem}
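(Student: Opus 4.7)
The plan is to split $H$ into spectral subspaces of $L$ and treat the positive and negative pieces separately. Since $\sigma(L)$ misses the strip $(-\lambda,\lambda)$, the spectrum decomposes as $\sigma^+\cup\sigma^-$ with $\sigma^\pm\subseteq\pm[\lambda,\infty)$, and the corresponding spectral projections $P^\pm$ (via Riesz/functional calculus, or simply the spectral theorem in the self-adjoint cases actually arising in the paper) give an orthogonal decomposition $H=H^+\oplus H^-$ invariant under $L$. Writing $L^\pm\coloneqq L|_{H^\pm}$, the Hille--Yosida theorem yields contraction semigroups $(e^{-sL^+})_{s\geq0}$ on $H^+$ and $(e^{sL^-})_{s\geq0}$ on $H^-$, both with operator norm at most $e^{-\lambda s}$. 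After splitting $\xi=\xi^++\xi^-$, $\eta=\eta^++\eta^-$, $\xi_0=\xi_0^++\xi_0^-$, the equation decouples into $\dot\xi^\pm+L^\pm\xi^\pm=\eta^\pm$.

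For the positive piece I would apply Duhamel directly,
\[
\xi^+(s)=e^{-sL^+}\xi_0^++\int_0^s e^{-(s-\sigma)L^+}\eta^+(\sigma)\,d\sigma,
\]
take norms pointwise, and combine the identity $\|e^{-\lambda\,\cdot}\|_{L^p(\R^+)}=(\lambda p)^{-1/p}$ with Young's convolution inequality (applied to the $L^1(\R)$ kernel $e^{-\lambda\,\cdot}\mathbbm 1_{\R^+}$, which has norm $1/\lambda$) to get
\[
\|\xi^+\|_{L^p(\R^+,H)}\leq\tfrac{1}{(\lambda p)^{1/p}}\|\xi_0^+\|_H+\tfrac{1}{\lambda}\|\eta^+\|_{L^p(\R^+,H)}.
\]

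For the negative piece the forward semigroup $e^{-sL^-}$ \emph{grows} at rate $\lambda$, so the naive Duhamel formula started at $0$ is useless. Instead, multiplying $\dot\xi^-+L^-\xi^-=\eta^-$ by $e^{sL^-}$ and integrating from $s$ to $T$ yields
\[
\xi^-(s)=e^{(T-s)L^-}\xi^-(T)-\int_s^T e^{(\sigma-s)L^-}\eta^-(\sigma)\,d\sigma,
\]
and I would then pass to the limit $T\to\infty$. The key observation is that $\xi^-\in L^p(\R^+,H)$ forces $\liminf_{T\to\infty}\|\xi^-(T)\|_H=0$; along a suitable sequence $T_n\to\infty$ the contraction bound $\|e^{(T_n-s)L^-}\|\leq e^{-\lambda(T_n-s)}$ kills the boundary term and dominated convergence handles the integral, giving
\[
\xi^-(s)=-\int_s^\infty e^{(\sigma-s)L^-}\eta^-(\sigma)\,d\sigma.
\]
Young's inequality (with the same $L^1$ kernel of norm $1/\lambda$) then yields $\|\xi^-\|_{L^p(\R^+,H)}\leq\lambda^{-1}\|\eta^-\|_{L^p(\R^+,H)}$.

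Combining the two estimates via Minkowski and using the orthogonality bounds $\|\xi_0^+\|_H\leq\|\xi_0\|_H$ and $\|\eta^\pm\|_{L^p}\leq\|\eta\|_{L^p}$ produces the desired inequality, the constant $2/\lambda$ arising from $\|\eta^+\|_{L^p}+\|\eta^-\|_{L^p}\leq 2\|\eta\|_{L^p}$. The main obstacle is the passage to the backward representation of $\xi^-$: the statement implicitly presumes $\xi\in L^p$ (otherwise the left-hand side is infinite), and exactly this integrability is what one needs to discard the boundary term at infinity and select the unique $L^p$-admissible solution of the ``expanding'' ODE on $H^-$. Every other step is routine Young/Duhamel bookkeeping.
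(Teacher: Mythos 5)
Your argument is correct and is essentially the paper's own proof in componentwise form: the paper constructs the same two-sided exponential kernel (forward Duhamel on the positive spectral subspace, backward integral on the negative one), applies Young's convolution inequality to obtain the $2/\lambda$ term, and bounds the remaining homogeneous part by $(\lambda p)^{-1/p}\|\xi_0\|_H$. The only difference is that you make explicit why the growing homogeneous mode on $H^-$ must be absent for an $L^p$ solution, a point the paper's terse proof leaves implicit.
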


\begin{proof}
Similar to the proof of Lemma we can construct a right-inverse $Q$ to the operator $\frac{d}{ds}+L$ by convolution with a kernel as in \eqref{eq:kernel}. Then $\|Q\eta\|_{L^p(\R^+,H)}\leq\frac{2}{\lambda}\|\eta\|_{L^p(\R^+,H)}$ as follows by applying Young's inequality as in \eqref{eq:Young}. Furthermore, the $L^p(\R^+,H)$ norm of elements $\xi$ in the kernel of $\frac{d}{ds}+L$ is bounded above by 
\begin{eqnarray*}
\big(\int_0^{\infty}e^{-\lambda ps}\|\xi_0\|_H^p\,ds\big)^{\frac{1}{p}}=\frac{1}{\sqrt[p]{\lambda p}}\|\xi_0\|.
\end{eqnarray*}
The claim then follows.
\end{proof}

\begin{lem}\label{lem:finintervalest}
Let $H$ be a Hilbert space. Assume $\xi\colon[s_0,s_1]\to H$ satisfies the equation $\dot\xi+L\xi=\eta$ for a path $s\mapsto L(s)$ of (densely defined) linear operators on $H$. Then for every $\eps>0$ there holds the estimate
\begin{multline*}
\|\xi(s_1)\|_H^2-\|\xi(s_0)\|_H^2\\
\leq(1+\eps^{-1})\|\xi\|_{L^2([s_0,s_1],H)}^2+\eps\|L\xi\|_{L^2([s_0,s_1],H)}^2+\|\eta\|_{L^2([s_0,s_1],H)}^2.
\end{multline*}
\end{lem}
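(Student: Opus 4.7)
The plan is to run a standard energy-style computation, using the equation $\dot\xi=\eta-L\xi$ to compute the time derivative of $\|\xi(s)\|_H^2$ and then integrating. Concretely, I would first observe that by the product rule for the Hilbert-space norm,
\begin{equation*}
\tfrac{d}{ds}\|\xi(s)\|_H^2=2\langle\xi(s),\dot\xi(s)\rangle_H=2\langle\xi(s),\eta(s)\rangle_H-2\langle\xi(s),L(s)\xi(s)\rangle_H.
\end{equation*}
Integrating this identity from $s_0$ to $s_1$ gives the starting point
\begin{equation*}
\|\xi(s_1)\|_H^2-\|\xi(s_0)\|_H^2=2\int_{s_0}^{s_1}\langle\xi,\eta\rangle_H\,ds-2\int_{s_0}^{s_1}\langle\xi,L\xi\rangle_H\,ds.
\end{equation*}

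Next I would bound each integrand pointwise using Cauchy--Schwarz together with the weighted Young inequality $2|ab|\leq\mu a^2+\mu^{-1}b^2$. For the cross term involving $\eta$ I would take weight $\mu=1$, obtaining $2|\langle\xi,\eta\rangle_H|\leq\|\xi\|_H^2+\|\eta\|_H^2$. For the term involving $L\xi$ I would take weight $\mu=\eps$, which yields $2|\langle\xi,L\xi\rangle_H|\leq\eps^{-1}\|\xi\|_H^2+\eps\|L\xi\|_H^2$. Summing these two pointwise bounds and integrating over $[s_0,s_1]$ produces exactly
\begin{equation*}
(1+\eps^{-1})\|\xi\|_{L^2([s_0,s_1],H)}^2+\eps\|L\xi\|_{L^2([s_0,s_1],H)}^2+\|\eta\|_{L^2([s_0,s_1],H)}^2,
\end{equation*}
which is the claimed right-hand side.

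There is no real obstacle here: the only subtle point is the justification of the fundamental-theorem-of-calculus step, which requires $s\mapsto\|\xi(s)\|_H^2$ to be absolutely continuous and $\langle\xi,L\xi\rangle_H$ to be integrable. Both properties follow from the standing hypotheses that $\xi$ solves $\dot\xi+L\xi=\eta$ in $H$ and (implicitly) that $\xi(s)\in\dom L(s)$ almost everywhere with $L\xi\in L^2([s_0,s_1],H)$; otherwise the right-hand side is infinite and the inequality is vacuous. Note in particular that no assumption on self-adjointness or dissipativity of $L(s)$ is used, since the bound on $\langle\xi,L\xi\rangle_H$ proceeds purely via Cauchy--Schwarz.
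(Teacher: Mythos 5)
Your proof is correct and follows essentially the same route as the paper: differentiate $\|\xi(s)\|_H^2$ using the equation, integrate over $[s_0,s_1]$, and bound the two cross terms by Cauchy--Schwarz combined with the weighted Young inequality (with weights $1$ and $\eps$ respectively). The remark that no self-adjointness of $L(s)$ is needed is accurate and consistent with the paper's statement.
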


\begin{proof}
We integrate the equation
\begin{eqnarray*}
\frac{d}{ds}\frac{1}{2}\|\xi(s)\|_H^2=\langle\xi(s),\eta(s)\rangle-\langle\xi(s),L(s)\xi(s)\rangle
\end{eqnarray*}
over the interval $[s_0,s_1]$ and apply to the two terms on the right-hand side the Cauchy-Schwarz inequality. This immediately yields the result.
\end{proof}

The following is a general interpolation lemma for operators of type $D=\frac{d}{ds}+L(s)$.
  
\begin{lem}[Interpolation Lemma]\label{interpolationlemma}
Let $V\subseteq H\subseteq V^{\ast}$ be a Gelfand triple. Assume that the family $L(s)\colon V\rightarrow V^{\ast}$ ($s\in\mathbbm R^-$) of operators satisfies for all $\xi\in V$ the uniform bound
\begin{eqnarray*}
\|\xi\|_V^2\leq c_1\langle L(s)\xi,\xi\rangle+c_2\|\xi\|_H^2
\end{eqnarray*}
for constants $c_1,c_2>0$. Then for any $\eps>0$ and $p\geq2$ there holds the estimate 
\begin{multline*}
\frac{1}{p}\|\xi(0)\|_H^p+\big(\frac{1}{c_1}-\frac{1}{2\eps}\big)\int_{-\infty}^0\|\xi(s)\|_H^{p-2}\|\xi(s)\|_V^2\,ds\\
\leq\big(\frac{\eps(p-2)}{2p}+\frac{c_2}{c_1}\big)\|\xi\|_{L^p(\R^-,H)}^p+\frac{\eps}{p}\|\dot\xi+L\xi\|_{L^p(\R^-,V^{\ast})}^p
\end{multline*}
for all $\xi\in W^{1,p}(\mathbbm R^-,V^{\ast})\cap L^p(\mathbbm R^-,V)$.
\end{lem}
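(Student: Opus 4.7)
My plan is to reduce the claim to a differential identity for the function $s\mapsto\tfrac{1}{p}\|\xi(s)\|_H^p$ on $\mathbb R^-$, integrated against the equation satisfied by $\xi$. Concretely, I would first argue by the standard chain rule in the Gelfand-triple setting (which holds for $\xi\in W^{1,p}(\mathbb R^-,V^\ast)\cap L^p(\mathbb R^-,V)$, with the product $\|\xi\|_H^{p-2}$ making sense because $p\ge2$) that
\begin{equation*}
\frac{d}{ds}\,\frac{1}{p}\|\xi(s)\|_H^p\;=\;\|\xi(s)\|_H^{p-2}\,\bigl\langle\dot\xi(s),\xi(s)\bigr\rangle_{V^\ast,V}.
\end{equation*}
Substituting $\dot\xi=(\dot\xi+L\xi)-L\xi$ and integrating from $-\infty$ to $0$ yields
\begin{equation*}
\frac{1}{p}\|\xi(0)\|_H^p+\int_{-\infty}^0\|\xi\|_H^{p-2}\langle L\xi,\xi\rangle\,ds\;=\;\int_{-\infty}^0\|\xi\|_H^{p-2}\langle\dot\xi+L\xi,\xi\rangle\,ds,
\end{equation*}
where the boundary term at $-\infty$ vanishes because $\xi\in L^p(\mathbb R^-,H)$ and $s\mapsto\|\xi(s)\|_H^p$ is absolutely continuous, hence converges along a subsequence to $0$.

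Next I would plug in the coercivity hypothesis on $L(s)$, rewritten as $\langle L(s)\xi,\xi\rangle\ge c_1^{-1}\|\xi\|_V^2-c_2c_1^{-1}\|\xi\|_H^2$, to lower-bound the integrand on the left by $c_1^{-1}\|\xi\|_H^{p-2}\|\xi\|_V^2-c_2c_1^{-1}\|\xi\|_H^p$. For the right-hand side I would use the duality bound $\langle\dot\xi+L\xi,\xi\rangle\le\|\dot\xi+L\xi\|_{V^\ast}\|\xi\|_V$ followed by Young's inequality $ab\le\tfrac{1}{2\eps}a^2+\tfrac{\eps}{2}b^2$ applied to $a=\|\xi\|_V$ and $b=\|\dot\xi+L\xi\|_{V^\ast}$, producing
\begin{equation*}
\|\xi\|_H^{p-2}\langle\dot\xi+L\xi,\xi\rangle\;\le\;\frac{1}{2\eps}\|\xi\|_H^{p-2}\|\xi\|_V^2+\frac{\eps}{2}\|\xi\|_H^{p-2}\|\dot\xi+L\xi\|_{V^\ast}^2.
\end{equation*}
The factor $\|\xi\|_H^{p-2}$ in the second term is then removed by the weighted Young inequality $w^{p-2}v^2\le\tfrac{p-2}{p}w^p+\tfrac{2}{p}v^p$ (valid for $p\ge2$ with conjugate exponents $p/(p-2),p/2$), applied pointwise to $w=\|\xi\|_H$, $v=\|\dot\xi+L\xi\|_{V^\ast}$.

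Collecting terms and regrouping, the $\tfrac{1}{2\eps}\|\xi\|_H^{p-2}\|\xi\|_V^2$ contribution moves to the left-hand side and combines with the $c_1^{-1}$ coefficient into the factor $c_1^{-1}-(2\eps)^{-1}$, while the $\|\xi\|_H^p$ contributions coalesce into $\tfrac{c_2}{c_1}+\tfrac{\eps(p-2)}{2p}$ and the error term $\tfrac{\eps}{p}\|\dot\xi+L\xi\|_{L^p(\mathbb R^-,V^\ast)}^p$ appears with the required constant; this gives the asserted inequality after dividing through by $p$. The only genuinely delicate point is the chain-rule identity together with the vanishing of the boundary term at $-\infty$ in the Gelfand-triple setting; everything else is Young. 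For the chain rule I would appeal to the standard density argument (smooth compactly supported in $\mathbb R^-$ with values in $V$ are dense in $W^{1,p}(\mathbb R^-,V^\ast)\cap L^p(\mathbb R^-,V)$), on which the formula is immediate, and then pass to the limit using that both sides of the integrated identity are continuous in the Banach-space norms involved.
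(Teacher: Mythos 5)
Your argument is correct and follows essentially the same route as the paper's proof: differentiate $s\mapsto\frac{1}{p}\|\xi(s)\|_H^p$, insert the coercivity hypothesis, apply duality and Young's inequality with parameter $\eps$, and absorb the $\|\xi\|_H^{p-2}\|\dot\xi+L\xi\|_{V^\ast}^2$ term via the exponents $p/(p-2)$ and $p/2$ (you do this by a pointwise weighted Young inequality, the paper by H\"older followed by Young, which yields the same constants). The only cosmetic slip is the closing remark about "dividing through by $p$" — the factor $1/p$ is already built into the chain-rule identity, so no final division is needed.
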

 
\begin{proof}
Set $\eta\coloneqq\dot\xi+L\xi\in L^p(\mathbbm R^-,V^{\ast})$. For every $s\in\mathbbm R^-$ there holds the estimate 
\begin{eqnarray*}
\lefteqn{\frac{1}{p}\frac{d}{ds}\|\xi(s)\|_H^p=\|\xi(s)\|_H^{p-2}\langle\dot\xi(s),\xi(s)\rangle_H}\\
&=&\|\xi(s)\|_H^{p-2}\langle\eta(s)-L(s)\xi(s),\xi(s)\rangle_H\\
&\leq&\|\xi(s)\|_H^{p-2}\big(\|\eta(s)\|_{V^{\ast}}\|\xi(s)\|_V-\frac{1}{c_1}\|\xi(s)\|_V^2+\frac{c_2}{c_1}\|\xi(s)\|_H^2\big)\\
&\leq&\big(\frac{1}{2\eps}-\frac{1}{c_1}\big)\|\xi(s)\|_H^{p-2}\|\xi(s)\|_V^2+\frac{\eps}{2}\|\xi(s)\|_H^{p-2}\|\eta(s)\|_{V^{\ast}}^2+\frac{c_2}{c_1}\|\xi(s)\|_H^p,
\end{eqnarray*}
for any constant $\eps>0$. Integrating this inequality over $\mathbbm R^-$ and applying H\"older's inequality yields
\begin{eqnarray*}
\lefteqn{\frac{1}{p}\|\xi(0)\|_H^p=\int_{-\infty}^0\frac{1}{p}\frac{d}{ds}\|\xi(s)\|_H^p\,ds}\\
&\leq&\int_{-\infty}^0\big(\frac{1}{2\eps}-\frac{1}{c_1}\big)\|\xi(s)\|_H^{p-2}\|\xi(s)\|_V^2+\frac{\eps}{2}\|\xi(s)\|_H^{p-2}\|\eta(s)\|_{V^{\ast}}^2+\frac{c_2}{c_1}\|\xi(s)\|_H^p\,ds\\
&\leq&\int_{-\infty}^0\big(\frac{1}{2\eps}-\frac{1}{c_1}\big)\|\xi(s)\|_H^{p-2}\|\xi(s)\|_V^2\,ds+\frac{c_2}{c_1}\int_{-\infty}^0\|\xi(s)\|_H^p\,ds\\
&&+\frac{\eps}{2}\Big(\int_{-\infty}^0\|\xi(s)\|_H^p\,ds\Big)^{\frac{p-2}{p}}\cdot\Big(\int_{-\infty}^0\|\eta(s)\|_{V^{\ast}}^p\,ds\Big)^{\frac{2}{p}}.
\end{eqnarray*}
We now apply Young's inequality to the product term in the last line. The claim then follows.  
\end{proof}

\end{document}